\theoremstyle{plain}
\newtheorem{theorem}{Theorem}[section]
\newtheorem{proposition}[theorem]{Proposition}
\newtheorem{lemma}[theorem]{Lemma}
\newtheorem{corollary}[theorem]{Corollary}
\theoremstyle{definition}
\theoremstyle{remark}
\newtheorem{remark}[theorem]{Remark}
\newtheorem*{acknowledgements}{Acknowledgements}
\numberwithin{equation}{section}
\numberwithin{table}{section}
\newenvironment{smallpars}{\footnotesize \setlength{\parindent}{0pt}
\setlength{\parskip}{10pt}}{\par}
\newcommand{\Lie}[1]{\textsl{#1}}
\newcommand{\lie}[1]{\mathfrak{#1}}
\DeclareMathOperator{\GL}{\Lie{GL}}
\DeclareMathOperator{\Ort}{\Lie{O}}
\DeclareMathOperator{\SO}{\Lie{SO}}
\DeclareMathOperator{\so}{\lie{so}}
\DeclareMathOperator{\SP}{\Lie{Sp}}
\DeclareMathOperator{\Un}{\Lie{U}}
\newcommand{\inp}[2]{\langle #1, #2 \rangle}
\newcommand{\norm}[1]{\lVert #1 \rVert}
\DeclareMathOperator{\re}{Re}
\DeclareMathOperator{\Ric}{Ric}
\newcommand{\Ricq}{\Ric^{\textup{q}}}
\newcommand{\Ricqa}{\Ricq_{\mathbf a}}
\DeclareMathOperator{\scal}{scal}
\newcommand{\scalq}{\scal^{\textup{q}}}
\newcommand{\talt}{\tilde{\mathbf a}}
\newcommand{\taltb}{\tilde{\mathbf b}}
\newcommand{\sym}[1]{{S^{#1}\mskip-2mu}}
\newcommand{\Nt}{\widetilde\nabla}
\newcommand{\FCur}{\mathcal R}
\newcommand{\QK}{{\mathcal{QK}}}
\DeclareMathOperator{\End}{End}
\newcommand{\hook}{\lrcorner}
\newcommand{\T}{\mbox{\footnotesize \checkmark}}
\DeclareFontFamily{U}{bigeuf}{}
\DeclareFontShape{U}{bigeuf}{m}{n}{<-6>s*[1.5]eufm5%
<6-8>s*[1.5]eufm7%
<8->s*[1.5]eufm10}{}
\DeclareSymbolFont{bigeufletters}{U}{bigeuf}{m}{n}
\DeclareMathSymbol{\sumcic}{\mathop}{bigeufletters}{`S}
\newcommand{\bdash}{-\hspace{0pt}} 
\newcommand{\eqbreak}{\\&\qquad}
\newcommand{\smalleqbreak}{\\&\quad}
\newcommand{\itref}[1]{\textup{(\ref{#1})}}
\let\oldbibliography\thebibliography
\renewcommand{\thebibliography}[1]{%
  \oldbibliography{#1}%
  \setlength{\itemsep}{0pt}%
  \setlength{\parsep}{0pt}%
}
\begin{document}
\title{\bfseries Curvature of Almost Quaternion\bdash Hermitian Manifolds}

\author{Francisco Mart\'\i n Cabrera and Andrew Swann}

\date{ }

\maketitle

\begin{smallpars}
  \textbf{Abstract.} We study the decomposition of the Riemannian
  curvature~$R$ tensor of an almost quaternion-Hermitian manifold under the
  action of its structure group $Sp(n)Sp(1)$.  Using the minimal
  connection, we show that most components are determined by the intrinsic
  torsion~$\xi$ and its covariant derivative~$\widetilde\nabla\xi$ and
  determine relations between the decompositions of $\xi\otimes\xi$,
  $\widetilde\nabla\xi$ and~$R$.  We pay particular attention to the
  behaviour of the Ricci curvature and the q-Ricci curvature.

  \textbf{Mathematics Subject Classification (2000):} Primary 53C55;
  Secondary 53C10, 53C15.

  \textbf{Keywords:} almost quaternion-Hermitian, almost hyperHermitian,
  intrinsic torsion, curvature tensor, $G$-connection.
\end{smallpars}

\begin{center}
  \begin{minipage}{0.8\linewidth}
    \begin{small}
      \tableofcontents
    \end{small}
  \end{minipage}
\end{center}

\section{Introduction}

An object of fundamental importance in Riemannian geometry is the curvature
tensor~$R$.  As a $(0,4)$-tensor, $R$~satisfies a number of algebraic
symmetry conditions, including the Bianchi identity.  The presence of an
additional geometric structure on the manifold gives rise to a
decomposition of the curvature in to components, each satisfying additional
symmetry relations.  Further information on the geometric structure may
then imply the vanishing of some of these components.  The most celebrated
examples of this come from the holonomy classification of
Berger~\cite{Berger:hol} where nearly all the possible non-trivial
reductions of the Riemannian holonomy of an irreducible structure give
solutions of the Einstein equations, see~\cite{Besse:Einstein}.

Tricerri \& Vanhecke \cite{Tricerri-Vanhecke:aH} were the first to make a
detailed study of the general decompositions for one particular class of
geometric structures, namely almost Hermitian structures, i.e., manifolds
with a metric and compatible almost complex structure.  The purpose of this
paper is to extend these techniques to almost quaternion\bdash Hermitian
manifolds.  These are manifolds with a Riemannian metric $g=\inp\cdot\cdot$
and local triples of almost complex structures $I$, $J$, $K$ satisfying the
quaternion identities.

For the almost Hermitian case, the geometry is determined by a
$\Un(n)$\bdash structure.  For almost quaternion\bdash Hermitian manifolds
the structure group is $\SP(n)\SP(1)$.  Both groups appear on Berger's list
of Riemannian holonomy groups and are of fundamental importance in the
study of non-linear supersymmetric $\sigma$-models in physics.  

The first step in the study of curvature on these manifolds is to decompose
the space~$\FCur$ of $(0,4)$-curvature tensors under the action of the
structure group.  It is helpful to do this in two steps.  The space $\FCur$
is invariant under the larger group $\GL(n,\mathbb H)\SP(1)$ that preserves
the space of compatible almost complex structures but not the metric.  We
thus first decompose~$\FCur$ under the action of~$\GL(n,\mathbb H)\SP(1)$,
see~\S\ref{sec:quaternionic}, and then determine the refinement of this
decomposition under the action of the smaller group $\SP(n)\SP(1)$,
see~\S\ref{sec:aqH}.  This mirrors the approach of Falcitelli et
al.~\cite{Falcitelli-FS:aH} in the almost Hermitian case, see
also~\cite{Cabrera-S:su-curvature}.  Note that although $\GL(n,\mathbb
H)\SP(1)$ is the structure group of an almost quaternionic structure, the
metric has been used to convert the curvature tensor from type $(1,3)$ to
type $(0,4)$ by lowering an index, and so the first step does not
correspond to decomposition results for curvature of almost quaternionic
manifolds.

Given a Riemannian $G$-structure, there is a distinguished compatible
connection~$\Nt$, the minimal connection, characterised by having the
smallest torsion pointwise.  The torsion of $\Nt$ is called the intrinsic
torsion of the $G$-structure is determined by the tensor $\xi=\Nt-\nabla$,
where $\nabla$ is the Levi-Civita connection of the metric.  The vanishing
of~$\xi$ is equivalent to the reduction of the holonomy to~$G$.  In
general, the intrinsic torsion $\xi$ splits up in to a number of components
under the action of~$G$.  Vanishing of certain components often correspond
to interesting geometric properties, and structures with specific torsion
are of increasing importance in theoretical physics.

For $G=\SP(n)\SP(1)$ the intrinsic torsion~$\xi$ splits in to six
components \cite{Swann:symplectiques}.  These contribute to the curvature
tensor~$R$ in two different ways; via components of $\xi\otimes\xi$ and via
components $\Nt\xi$.  This information determines directly all the
components of $R$ transverse to the space~$\QK$ of curvature tensors of
quaternionic K\"ahler manifolds, i.e., manifolds where the holonomy reduces
to~$\SP(n)\SP(1)$.  Since $\dim\FCur = \tfrac43n^2(16n^2-1)$ and $\dim\QK =
\tfrac16(4n^4+12n^3+11n^2+3n+6)$, this means that nearly all components
of~$R$ are determined by~$\xi$ and its derivative.

In~\S\ref{sec:curvature}, we compute the contribution of~$\xi$ to the
components of~$R$ and display the results in tables.  Particular attention
is paid to the contributions to the Ricci curvature~$\Ric$ and its
quaternionic partner the q-Ricci curvature~$\Ricq$.  For these two tensors,
it is only the scalar parts that remain undetermined and we show how even
these may found by invoking additional information from the curvature of
the bundle of compatible local almost complex structures.  The paper closes
with a number of consequences for particular types of almost
quaternion\bdash Hermitian manifolds.  For examples of such structures we
refer the interested reader to Cabrera \& Swann
\cite{Cabrera-S:aqH-torsion}, where it is also shown how the components
of~$\xi$ may be efficiently computed via the exterior algebra.

\begin{acknowledgements}
  This work is supported by a grant from the MEC (Spain), project
  MTM2004-2644.  Francisco Mart\'\i n Cabrera thanks IMADA at the
  University of Southern Denmark in Odense for kind hospitality whilst
  working on this project.
\end{acknowledgements}

\section{Preliminaries}
\label{sec:preliminaries}

Let $\mathcal V$ be an $m$-dimensional real vector space.  The space of
Riemannian curvature tensors $\FCur$ on $\mathcal V$ consists of those
tensors $R$ of type $(0,4)$ which satisfies the same symmetries as the
Riemannian curvature tensor of a Riemannian manifold.  This is summarised
by saying that $\FCur$ is the kernel of the mapping
\begin{equation}
  \label{wedgingtwoforms}
  \sym 2 ( \Lambda^2 \mathcal V^* ) \to \Lambda^4 \mathcal V^*,
\end{equation}
defined by wedging two-forms together.

When there is a positive definite inner product
$g(\cdot,\cdot)=\inp\cdot\cdot$, defined on $\mathcal V$, then $\mathcal V$
is a representation of the orthogonal group $\Ort(m)$ and we can consider
the map $\Ric \colon \FCur \to \sym 2 \mathcal V^*$, given by $\Ric(R)(x,y)
= R(x,e_i,y,e_i)$, where $\{e_1, \dots, e_m \}$ is an orthonormal basis for
vectors and we use the summation convention.  This notation and such a
convention will be used in the sequel.  The map $\Ric$ is
$\Ort(m)$-invariant and $\Ric(R)$~is called the \emph{Ricci tensor}
associated to~$R$.  The \emph{scalar curvature} $\scal(R)$ of~$R$ is the
trace of~$\Ric(R)$.

There is a natural extension of the inner product $\inp\cdot\cdot$ to the
space of $p$-forms $\Lambda^p \mathcal V^*$ defined by
\begin{equation*}
  \inp ab = \tfrac1{p!} a(e_{i_1}, \dots, e_{i_p})
  b(e_{i_1},\dots, e_{i_p}).
\end{equation*}

On the other hand, we will also consider $\mathcal V$ equipped with three
almost complex structures $I$, $J$ and $K$ satisfying the same identities
as the imaginary units of quaternion numbers, i.e., $I^2 = J^2= -1$ and $K=
IJ = - JI$.  In such a case $m =4n$, and $\mathcal V$ is a representation
of the subgroup $\GL(n,\mathbb H)\SP(1)$ of $\GL(4n, \mathbb R)$
characterised by the fact that it preserves the three-dimensional vector space
$\mathcal G$ of endomorphisms of $\mathcal V$ generated by $I$, $J$ and
$K$.  A triple $I'$, $J'$ and $K'$ is said to be an \emph{adapted basis}
for $\mathcal G$, if they generate $\mathcal G$, satisfy the same
identities as the unit imaginary quaternions.  One finds that for $A =
I',J',K'$ we have $A = a_A I + b_A J + c_A K$ with $a_A^2 + b_A^2 + c_A^2 =
1$.

Furthermore, we will also consider both situations simultaneously:
$\mathcal V$ equipped with an inner product $\inp\cdot\cdot$ and three
almost complex structures $I$, $J$ and $K$ satisfying the above mentioned
quaternionic identities and the compatibility condition with the inner
product, $\inp{Ax}{Ay} = \inp xy$, for $A=I,J,K$.  In this case, we have the
three K{\"a}hler forms given by $\omega_A (x,y) = \inp x{Ay}$, $A=I,J,K$, and
the four-form $\Omega$ defined by
\begin{equation}
  \label{fundfourform}
  \Omega = \sum_{A=I,J,K} \omega_A \wedge \omega_A.
\end{equation}
The $4n$-form $\Omega^n$ can be used to fix an orientation and $\mathcal V$
is a representation of the subgroup $\SP(n)\SP(1)$ of $\SO(4n)$ consisting
by those elements which preserve $\mathcal G$.  Alternatively,
$\SP(n)\SP(1)$ can be defined as consisting of those elements of $\Ort(4n)$
which preserve $\Omega$.

The following notation will be used in this paper.  If $b$ is a
$(0,s)$-tensor, we write
\begin{gather*}
  A_{(i)}b(X_1,\dots,X_i,\dots,X_s) = - b(X_1,\dots,AX_i,\dots,X_s),\\
  A b(X_1,\dots,X_s) = (-1)^sb(AX_1,\dots,AX_s),
\end{gather*}
for $A=I,J,K$.

There is an $\SP(n)\SP(1)$-invariant map $\Ricq \colon \FCur \to \otimes^2
\mathcal V^*$ given by
\begin{equation*}
  \Ricq (R) (x,y) = \sum_{A=I,J,K} R(x,e_i,Ay,Ae_i).
\end{equation*}
The tensor $\Ricq (R)$ will be called the \emph{q-Ricci tensor} and the
trace $\scalq(R)$ of $\Ricq(R)$ will be referred as the \emph{q-scalar
curvature} of~$R$.  If we write
\begin{equation*}
  \Ric^*_A = R(x,e_i,Ay,Ae_i),
\end{equation*}
it is not hard to prove that
\begin{equation*}
  \sum_{A=I,J,K} A(\Ric^*_B)_{\mathbf a} = - (\Ric^*_B)_{\mathbf a}, \qquad
  \inp{\Ric^*_B}{\omega_B} = 0, 
\end{equation*}
where $(\Ric^*_B)_{\mathbf a}$ is the skew-symmetric part of $\Ric^*_B$.
Moreover, for all cyclic permutations of $I,J,K$, we have
\begin{equation*}
  \inp{\Ric^*_I}{\omega_J} = - \inp{\Ric^*_K}{\omega_J}.
\end{equation*}
Hence the skew-symmetric part $\Ricqa$ of the tensor $\Ricq$ satisfies the
conditions:
\begin{enumerate}
\item $\sum_{A=I,J,K} A \Ricqa = - \Ricqa$, and
\item $\inp{\Ricqa}{\omega_A} = 0$, for $A=I,J,K$,
\end{enumerate}
which characterises the irreducible $\SP(n)\SP(1)$-module $\Lambda^2_0 E
\sym 2 H \subset \Lambda^2 \mathcal V^*$ of skew-symmetric two-forms.  Thus
$\Ricqa \in \Lambda^2_0 E \sym 2 H$.  In summary, we have
\begin{align*}
  \Ricq &\in \sym 2 \mathcal V^* + \Lambda^2_0 E \sym 2 H = \mathbb R g +
  \Lambda_0^2 E + \sym 2 E \sym 2 H + \Lambda^2_0 E \sym 2 H\quad\text{and}\\
  \Ric &\in \sym 2 \mathcal V^* = \mathbb R g + \Lambda_0^2 E + \sym 2 E
  \sym 2 H,
\end{align*}
where $\Lambda^2_0 E$ consists of trace-free symmetric two-forms $b$ such
that $Ab=b$, $A=I,J,K$, and $\sym 2 E \sym 2 H$ consists of those such that
$\sum_A Ab=-b$.

Now, we recall some facts about quaternionic structures in relation with
representation theory.  We will follow the $E$-$H$-formalism used in
\cite{Salamon:Invent,Swann:symplectiques,Swann:MathAnn} and we refer to
\cite{Broecker-tom-Dieck:Lie} for general information on representation
theory.  Thus, $E$ is the fundamental representation of $\GL(n, \mathbb H)$
on ${\mathbb C}^{2n} \cong {\mathbb H}^n$ via left multiplication by
quaternionic matrices, considered in $\GL(2n, {\mathbb C})$, and $H$ is the
representation of $\SP(1)$ on ${\mathbb C}^2 \cong {\mathbb H}$ given by $q
.  \zeta = \zeta \overline{q}$, for $q \in \SP(1)$ and $\zeta \in \mathbb
H$.

An irreducible representation of $\GL(n, \mathbb H)$ is determined by its
dominant weight $(\lambda_1,\dots, \lambda_n)$, where $\lambda_i$ are
integers with $\lambda_1 \geqslant \lambda_2 \geqslant \dots \geqslant
\lambda_n \geqslant 0$.  This representation will be denoted by
$U^{\lambda_1, \dots, \lambda_r}$, where $r$ is the largest integer such
that $\lambda_r > 0$.  Likewise, $U^{\ast\lambda_1, \dots, \lambda_r}$ will
denote the dual representation of $U^{\lambda_1, \dots, \lambda_r}$.
Familiar notation is used for some of these modules, when possible.  For
instance, $U^{k} = \sym k E$, the $k$th symmetric power of $E$, and $U^{1,
\dots, 1} = \Lambda^r E$, where there are $r$ ones in exponent.  Likewise,
$\sym k E^*$ and $\Lambda^r E^*$ will be the respective dual
representations.

On $E$, there is an invariant complex symplectic form $\omega_E$ and a
Hermitian inner product given by $\inp xy_{\mathbb C} = \omega_E (x,
\widetilde{y})$, for all $x,y \in E$, and being $\widetilde{y} = j y $ ($y
\mapsto \widetilde{y}$ is a quaternionic structure map on $E={\mathbb
C}^{2n}$ considered as left complex vector space).  The mapping $x \mapsto
x^\omega = \omega_E ( \cdot, x)$ gives us an identification of $E$ with its
dual $E^*$.  In using group representations, this identification works only
with groups which preserve $\omega_E$.  For instance, we can not use such
an identification for $\GL(n, \mathbb H)$-representations.  If $\{ e_1,
\dots, e_n, \widetilde{e}_1, \dots, \widetilde{e}_n \}$ is a complex
orthonormal basis for $E$, then $ \omega_E = e^\omega_i \wedge
\widetilde{e}^\omega_i = e^\omega_i \widetilde{e}^\omega_i -
\widetilde{e}^\omega_i e^\omega_i, $ where we have omitted tensor product
signs.  The group $\SP(n)$ coincides with the subgroup of $\Un(2n)$ which
preserves $\omega_E$.

The $\SP(1)$-module $H$ will be also considered as left complex vector
space.  Regarding $H$ as 4-dimensional real space with the Euclidean metric
$\inp\cdot\cdot$ such that $\{ 1, i, j, k \}$ is an orthonormal basis, the
complex symplectic form $\omega_H$ is given by $\omega_H = (1^\flat \wedge
j^\flat + k^\flat \wedge i^\flat) + i (1^\flat \wedge k^\flat + i^\flat
\wedge j^\flat)$, where $h^\flat$ is the real one-form given by $q \mapsto
\inp hq$.  We also have the identification, $q \mapsto q^\omega = \omega_H
(\cdot, q )$, of $H$ with its dual $H^*$ as complex space.  On $H$, we have
a quaternionic structure map given by $h = z_1 + z_2 j \mapsto
\widetilde{h} = jq = - \overline{z}_2 + \overline{z}_1 j$, where $z_1,z_2
\in {\mathbb C}$ and $\overline{z}_1, \overline{z}_2 $ are their
conjugates.  On $H^*$, the structure map given by $\widetilde{h^\omega} = -
\widetilde{h}^\omega$, this is based in the identities
\begin{equation*}
  \widetilde{h^\omega}(q) = \overline{h^\omega(\widetilde{q})} =
  \overline{ \omega_H ( \widetilde{q}, h )} = - \omega_H ( q,
  \widetilde{h}) = - \widetilde{h}^\omega(q).
\end{equation*}
{}From now on, we will take $h \in H$ such that $\inp hh = 1$.  Thus $\{ h,
\widetilde{h} \}$ is a basis of the complex vector space 
$H$ and $\omega_H = h^\omega \wedge \widetilde{h}^\omega$.  Finally, we
point out that the irreducible representations of $\SP(1)$ are the
symmetric powers $\sym k H \cong {\mathbb C}^{k+1}$.

An irreducible representation of $\SP(n)$ is also determined by its
dominant weight $(\lambda_1,\dots,$ $ \lambda_n)$, where $\lambda_i$ are
integers with $\lambda_1 \geqslant \lambda_2 \geqslant \dots \geqslant
\lambda_n \geqslant 0$.  This representation will be denoted by
$V^{\lambda_1, \dots, \lambda_r}$, where $r$ is the largest integer such
that $\lambda_r > 0$.  Likewise, familiar notation is also used for some of
these modules.  For instance, $V^{k} = \sym k E$, and $V^{1, \dots, 1} =
\Lambda_0^r E$, where there are $r$ ones in exponent and $\Lambda_0^r E$ is
the $\SP(n)$-invariant complement to $\omega_E \Lambda^{r-2} E$ in
$\Lambda^r E$.  Also $K$ will denote the module $V^{21}$, which arises in
the decomposition $ E \otimes \Lambda_0^2 E \cong \Lambda_0^3 E + K + E$,
where $+$ denotes direct sum.

\begin{remark}
  Regarding complex and real representations: suppose $V$ is a complex
  $G$-module equipped with a real structure, where $G$ is a Lie group.
  Most of the time in this paper, $V$ will also denote the real $G$-module
  which is the $(+1)$-eigenspace of the structure map.  The context should
  tell us which space are referring to.  However, if there is risk of
  confusion or when we feel that a clearer exposition is needed, we will
  denote the second mentioned space by $[V]$.
\end{remark}

Returning to our real vector space $\mathcal V$ with the three almost
complex structures $I$, $J$ and $K$ satisfying the quaternionic identities,
we can consider $\mathcal V$ as complex vector space by saying $(\lambda +i
\mu)x= \lambda x + \mu Ix$, for all $\lambda+i \mu \in \mathbb C$ and $x
\in \mathcal V$.  Since $2n$ is the dimension of such a complex vector
space, we will also write $E$ when we are referring to $\mathcal V$ as
complex vector space.  The dual vector space $E^*$ of $E$ consists of
complex one-forms $a_{\mathbb C}= a + i I a$, for $a \in \mathcal V^*$, and
has $i a_{\mathbb C} = - (Ia)_{\mathbb C}$.  Because of the triple $I$, $J$
and~$K$ we can consider $E$ and $E^*$ as two complex $\GL(n, \mathbb
H)$-representations endowed with their respective quaternionic structure
maps defined by $x \mapsto \widetilde{x}=Jx$ and $ a_{\mathbb C} \mapsto
\widetilde{a_{\mathbb C}} = - (J a)_{\mathbb C}$.

The actions of $\GL(n,\mathbb H)\SP(1)$ on the real vector spaces $\mathcal
V$ and $\mathcal V^*$ gives rise to $\GL(n,\mathbb H)\SP(1)$-isomorphisms
which identify $\mathcal V \otimes_{\mathbb R} {\mathbb C} \cong E
\otimes_{\mathbb C} H$ and $\mathcal V^* \otimes_{\mathbb R} {\mathbb C}
\cong E^* \otimes_{\mathbb C} H^*$.  In fact, such isomorphisms are defined
respectively by $x \otimes_{\mathbb R} z \mapsto x \otimes_{\mathbb C} zh +
Jx \otimes_{\mathbb C} z \widetilde{h}$ and $a \otimes_{\mathbb R} z
\mapsto a_{\mathbb C} \otimes_{\mathbb C} zh^\omega + (Ja)_{\mathbb C}
\otimes_{\mathbb C} z \widetilde{h}^\omega$, where we have fixed $h \in H$
such that $\inp hh = 1$.

There are real structure maps defined on $E \otimes_{\mathbb C} H$ and $E^*
\otimes_{\mathbb C} H^*$ which are given by $x \otimes_{\mathbb C} q
\mapsto Jx \otimes_{\mathbb C} \widetilde{q}$ and $a_{\mathbb C}
\otimes_{\mathbb C} q^\omega \mapsto (Ja)_{\mathbb C} \otimes_{\mathbb C}
\widetilde{q}^\omega$, respectively.  Such structure maps correspond to $x
\otimes_{\mathbb R} z \mapsto x \otimes_{\mathbb R} \overline{z}$ and $a
\otimes_{\mathbb R} z \mapsto a \otimes_{\mathbb R} \overline{z}$
respectively defined on $\mathcal V \otimes_{\mathbb R} \mathbb C$ and
$\mathcal V^* \otimes_{\mathbb R} \mathbb C$.  Thus, for the corresponding
$(+1)$-eigenspaces, we have $[EH] \cong \mathcal V \otimes_{\mathbb R}
{\mathbb R} \cong \mathcal V$ and $[E^* H^*] \cong \mathcal V^*
\otimes_{\mathbb R} {\mathbb R} \cong \mathcal V^*$.

For considering elements of the tensorial algebra of $[E^* H^*] \cong
\mathcal V^*$, we need to compute the restrictions of $a_{\mathbb C}
h^\omega$, $a_{\mathbb C} \widetilde{h}^\omega$, $\widetilde{a_{\mathbb C}}
h^\omega$ and $\widetilde{a_{\mathbb C}} \widetilde{h}^\omega$ to $[EH]
\cong \mathcal V$.  Thus, for all $x \in \mathcal V$, we have $x \otimes h
+ Jx \otimes \widetilde{h}$ which is the corresponding element in $[EH]$
and obtain
\begin{equation} \label{ah}
  \begin{gathered}
    a_{\mathbb C} \otimes h^\omega ( x \otimes h + Jx \otimes
    \widetilde{h} ) =  (Ja - i K a)(x), \\
    \widetilde{a_{\mathbb C}} \otimes \widetilde{h}^\omega ( x \otimes h +
    Jx \otimes \widetilde{h} ) = (- Ja - i
    Ka) (x),  \\
    a_{\mathbb C} \otimes \widetilde{h}^\omega ( x \otimes h + Jx \otimes
    \widetilde{h} ) = (a + i Ia)
    (x),  \\
    \widetilde{a_{\mathbb C}} \otimes h^\omega ( x \otimes h + Jx \otimes
    \widetilde{h} ) = (a - i Ia) (x).
  \end{gathered}
\end{equation}

\section{Quaternionic decomposition of curvature}
\label{sec:quaternionic}

The space of Riemannian curvature tensors $\FCur$ is the kernel of the map
\eqref{wedgingtwoforms}, which is $\GL(n,\mathbb H)\SP(1)$-equivariant, so
$\FCur$ is also a $\GL(n,\mathbb H)\SP(1)$-module.  Our purpose here is to
show the $\GL(n,\mathbb H)\SP(1)$-decomposition of $\FCur$ into irreducible
components.

On the one hand, the space $\Lambda^2 \mathcal V^*$ of skew-symmetric
two-forms has the following decomposition into irreducible $\GL(n,\mathbb
H)\SP(1)$-modules,
\begin{equation}
  \label{twoforms:glnhdecomp}
  \Lambda^2 \mathcal V^* = \sym 2 E^* + \Lambda^2 E^* \sym 2 H,
\end{equation}
where the real module $\sym 2 E^*$ is characterised as consisting of those
$b \in \Lambda^2 \mathcal V^*$ such that $Ab=b$, for $A =I,J,K$, and the
skew-symmetric two-forms $b \in \Lambda^2 E^* \sym 2 H$ are such that
$\sum_{A=I,J,K} Ab = -b$.

Now, from \eqref{twoforms:glnhdecomp} it follows
\begin{equation*}
  \begin{split}
    \sym 2 ( \Lambda^2 \mathcal V^* ) &= \sym 2 ( \sym 2 E^* ) + \sym 2 (
    \Lambda^2 E^* \sym 2 H ) + \sym 2 E^* \Lambda^2 E^* \sym 2 H \\
    &= \sym 2 ( \sym 2 E^* ) + \sym 2 ( \Lambda^2 E^* ) ( \sym 4 H +
    \mathbb R ) \eqbreak + \Lambda^2 ( \Lambda^2 E^* ) \sym 2 H + \sym 2
    E^* \Lambda^2 E^* \sym 2 H,
  \end{split}
\end{equation*}
where we have taken $\sym 2 ( \sym 2 H ) \cong \sym 4 H + \mathbb R$ and
$\Lambda^2 ( \sym 2 H ) \cong \sym 2 H $ into account.  Since $\sym 2 (
\sym 2 E^* ) = \sym 4 E^* + U^{*22}$, $ \sym 2 ( \Lambda^2 E^* ) =
\Lambda^4 E^* + U^{*22}$, $\Lambda^2 ( \Lambda^2 E^* )= U^{*211}$ and $\sym
2 E \Lambda^2 E^*= U^{*211} + U^{*31}$, we obtain
\begin{equation*}
  \begin{split}
    \sym 2 ( \Lambda^2 \mathcal V^* ) &= \sym 4 E^* + 2 U^{*22} + \Lambda^4
    E^* + (U^{*31} + 2 U^{*211}) \sym 2 H \eqbreak + ( \Lambda^4 E^* +
    U^{*22}) \sym 4 H.
  \end{split}
\end{equation*}

On the other hand, for the skew symmetric four-forms on $\mathcal V$, we
obtain
\begin{equation*}
  \Lambda^4 \mathcal V^* = \Lambda^4 E^* \sym 4 H + U^{*211}
  \sym 2 H + U^{*22}.
\end{equation*}
Because there are non-vanishing values of the map \eqref{wedgingtwoforms}
on each one of these three summands, we conclude that
\begin{equation}
  \label{curvaturedec1}
  \FCur = \sym 4 E^* + U^{*22} + \Lambda^4 E^* + (U^{*31} + U^{*211}) \sym
  2 H + U^{*22} \sym 4 H. 
\end{equation}

In order to give explicit descriptions for these modules, we will consider
some $\GL(n,\mathbb H)\SP(1)$-endomorphisms on $\FCur$.  The first one $L$
is given by
\begin{equation}
  \label{firstglnh}
  L (R) = \sum_{\substack{1 \leqslant i < j \leqslant 4\\A=I,J,K}}
  A_{(i)}A_{(j)}R, 
\end{equation}
for all $R \in \FCur$.  Regarding $L$, we have the following results.

\begin{proposition}
  \label{glnhsplit1}
  The map $L$ is $\GL(n,\mathbb H)\SP(1)$-equivariant and
  \begin{asparaenum}
  \item\label{item:0H} $\sym 4 E^* + U^{*22} + \Lambda^4 E^*$ consists of
    $R \in \FCur$ such that $L(R)=6R$;
  \item\label{item:2H} $(U^{*31} + U^{*211}) \sym 2 H$ consists of $R \in
    \FCur$ such that $L(R)=2R$;
  \item\label{item:4H} $ U^{*22} \sym 4 H$ consists of $R \in \FCur$ such
    that $L(R)=-6R$.
  \end{asparaenum}
\end{proposition}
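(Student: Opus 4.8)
\textit{Proof proposal.}
The plan is to recognise $L$ as an affine function of the Casimir operator of the Lie algebra $\mathcal G\cong\mathfrak{sp}(1)$ spanned by $I,J,K$ inside $\mathfrak{gl}(\mathcal V)$ acting on the $4$-tensor $R$, and then to read off its three eigenvalues from \eqref{curvaturedec1} according to the $H$-type of each summand.

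For $A=I,J,K$ set $S_A=\sum_{i=1}^4 A_{(i)}$; by the definition of $A_{(i)}$ this is exactly the induced action of the endomorphism $A$ on a $(0,4)$-tensor. Since $A_{(i)}$ and $B_{(j)}$ commute for $i\ne j$ and $A_{(i)}^2=-\mathrm{id}$ (because $A^2=-\mathrm{id}$), a short computation gives
\[
  \sum_{A=I,J,K}S_A^2 = \sum_A\Bigl(\sum_i A_{(i)}^2 + \sum_{i\ne j}A_{(i)}A_{(j)}\Bigr) = -12\,\mathrm{id} + 2L ,
\]
so $L=\tfrac12\bigl(\sum_A S_A^2 + 12\,\mathrm{id}\bigr)$. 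Now $\sum_A S_A^2$ is a Casimir operator of $\mathcal G$, hence commutes with the $\SP(1)$-action; its summands lie in $\mathcal G$, which commutes with $\GL(n,\mathbb H)$, so $\sum_A S_A^2$, and therefore $L$, commutes with the whole $\GL(n,\mathbb H)\SP(1)$-action; this gives the equivariance. (Equivalently, $\sum_A A_{(i)}A_{(j)}$ does not depend on the chosen adapted basis $\{I,J,K\}$ of $\mathcal G$, since any two adapted bases differ by an element of $\SO(3)$ and $A\mapsto A_{(i)}$ is linear, while $\GL(n,\mathbb H)\SP(1)$ merely permutes adapted bases.) Since $\FCur$ is the kernel of the $\mathfrak{gl}(\mathcal V)$-equivariant map \eqref{wedgingtwoforms} it is $\mathcal G$-invariant, so $L$ maps $\FCur$ into itself.

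For the eigenvalues, recall that $\mathcal V\cong[EH]$ and that $\mathcal G$ acts only on the $H$-factor, with $A^2=-\mathrm{id}$; hence its Casimir $\sum_A S_A^2$ acts on a summand of $H$-type $\sym kH$ by the scalar $-k(k+2)$, the standard $\mathfrak{su}(2)$ value, which one checks at once on the trivial module (value $0$) and on $H$ (value $-3$). Substituting into \eqref{curvaturedec1}: on $\sym 4E^*+U^{*22}+\Lambda^4E^*$ the $H$-type is trivial, so $\sum_A S_A^2=0$ and $L=6$; on $(U^{*31}+U^{*211})\sym 2H$ one has $\sum_A S_A^2=-8$, so $L=2$; on $U^{*22}\sym 4H$ one has $\sum_A S_A^2=-24$, so $L=-6$. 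Being equivariant, $L$ is scalar on each irreducible summand by Schur's lemma, and since the values $6,2,-6$ are distinct the three displayed modules are exactly the corresponding eigenspaces, which establishes all three assertions.

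The representation-theoretic step is immediate once the identity $L=\tfrac12(\sum_A S_A^2+12\,\mathrm{id})$ is in hand, so the only place demanding care is the bookkeeping of the second paragraph: verifying $A_{(i)}^2=-\mathrm{id}$ and the commutation relations against the paper's sign conventions for $A_{(i)}$, and---if one takes the concrete route---the basis-independence argument for equivariance. None of this is conceptually hard.
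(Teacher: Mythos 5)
Your argument is correct, and it takes a genuinely different route from the paper's. You rewrite $L=\tfrac12\bigl(\sum_A S_A^2+12\,\mathrm{id}\bigr)$ with $S_A=\sum_i A_{(i)}$ the derivation action of $A\in\mathcal G\cong\lie{sp}(1)$ on $(0,4)$-tensors, so that $L$ becomes an affine function of the quadratic Casimir of $\mathcal G$; equivariance is then the standard basis-independence of $\sum_A S_A^2$ under the $\SO(3)$-action on adapted bases together with the fact that $\GL(n,\mathbb H)$ commutes with $I,J,K$, and the eigenvalues $6,2,-6$ drop out of the Casimir values $-k(k+2)$ on $H$-type $\sym kH$ for $k=0,2,4$ (the identity $A_{(i)}^2=-\mathrm{id}$ and the normalisation check $\sum_A A^2=-3$ on $\mathcal V\cong[EH]$ are consistent with the paper's sign conventions, as is the resulting characterisation of $\sym2E^*$ and $\Lambda^2E^*\sym2H$ in \eqref{twoforms:glnhdecomp}). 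The paper instead verifies equivariance by the same change-of-adapted-basis observation but then computes $L$ explicitly on test elements $\Phi_1,\Phi_2,\Phi_3$ built from the restriction formul\ae\ \eqref{ah}, one for each $H$-type. Your approach is shorter and explains conceptually why only the $H$-type of a summand matters and why $L$ preserves $\FCur$ (the Casimir acts within every $\lie{sp}(1)$-submodule); the paper's computation, on the other hand, produces explicit nonzero representatives of each eigenspace in the $[EH]$-formalism, which fit the constructive style of the rest of \S\ref{sec:quaternionic}. Both are complete proofs; no gap.
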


\begin{proof}
  If we use another adapted basis $I'$, $J'$ and $K'$ for $\mathcal G$ in
  equation~\eqref{firstglnh}, it is straightforward to check that we will
  obtain the same map $L$.  Hence $L$ is a $\GL(n,\mathbb H)\SP(1)$-map.

  For \itref{item:4H}, we first show that we have the following
  decomposition of $\sym 2 H \otimes \sym 2 H$ into $\SP(1)$-irreducible
  modules
  \begin{equation*}
    \sym 2 H \otimes \sym 2 H = \sym 2 (\sym 2 H) + \Lambda^2 (\sym2H) =
    \sym 4 H + \mathbb R \omega_H \otimes \omega_H + \sym2H, 
  \end{equation*}
  where we have taken $\sym 2 (\sym 2 H) \cong \sym 4 H + \mathbb R
  \omega_H \otimes \omega_H$ and $\Lambda^2 (\sym 2 H) \cong \sym 2 H$ into
  account.

  Next, we consider $(a_{\mathbb C} b_{\mathbb C} c_{\mathbb C} d_{\mathbb
  C}) \widetilde{h}^\omega \widetilde{h}^\omega \widetilde{h}^\omega
  \widetilde{h}^\omega \in (\otimes^4 E^*) \otimes \sym 4 H \subset
  \otimes^4 (E^* H)$, where we have omitted tensor product signs.  Let
  $\Phi_1 \in [ (\otimes^4 E) \sym 4 H] \subset \otimes^4 [E^* H]$ be the
  tensor defined by $\Phi_1 = \re ( (a_{\mathbb C}\widetilde{h}^\omega)
  (b_{\mathbb C} \widetilde{h}^\omega) (c_{\mathbb C} \widetilde{h}^\omega)
  (d_{\mathbb C} \widetilde{h}^\omega)_{|\mathcal V})$, where $\re$ means
  the real part.  Now, using equations~\eqref{ah}, we obtain
  \begin{equation*}
    \begin{split}
      \Phi_1 &= a b c d - a Ib Ic d - a Ib c Id - a b Ic Id \eqbreak - Ia
      Ib c d - Ia b Ic d - Ia b c Id + Ia Ib Ic Id.
    \end{split}
  \end{equation*}
  {}From this last expression it is straightforward to check that
  $L(\Phi_1)=-6\Phi_1$.  Since there are no conditions on $a$, $b$, $c$ and
  $d$, part~\itref{item:4H} follows.

  Part \itref{item:0H} follows by similar arguments considering
  $(a_{\mathbb C} b_{\mathbb C} c_{\mathbb C} d_{\mathbb C}) \omega_H
  \omega_H \in \otimes^4 E^* \subset \otimes^4 (E^* H)$.  Thus it is
  obtained $\Phi_2 \in [ \otimes^4 E ] \subset \otimes^4 [E^* H]$ defined
  by $\Phi_2 = \re ( (a_{\mathbb C} b_{\mathbb C} c_{\mathbb C} d_{\mathbb
  C}) \omega_H \omega_{H|\mathcal V})$, we recall that $\omega_H = h^\omega
  \widetilde{h}^\omega - \widetilde{h}^\omega h^\omega$.  After using
  equations~\eqref{ah}, one can check that $L(\Phi_2) = 6 \Phi_2$.

  Finally, for part~\itref{item:2H}, we recall that $(h^\omega h^\omega)
  \wedge ( \widetilde{h}^\omega \widetilde{h}^\omega) \in \Lambda^2 (\sym 2
  H) \cong \sym 2 H$ and consider
  \begin{equation*}
    (a_{\mathbb C} b_{\mathbb C} c_{\mathbb C} d_{\mathbb C}) ( h^\omega
    h^\omega \widetilde{h}^\omega \widetilde{h}^\omega -
    \widetilde{h}^\omega \widetilde{h}^\omega h^\omega h^\omega )\in
    (\otimes^4 E^*) \sym 2 H \subset 
    \otimes^4 (E^* H),
  \end{equation*}
  then, for $\Phi_3 = \re ( (a_{\mathbb C} b_{\mathbb C} c_{\mathbb C}
  d_{\mathbb C}) ( h^\omega h^\omega \widetilde{h}^\omega
  \widetilde{h}^\omega - \widetilde{h}^\omega \widetilde{h}^\omega h^\omega
  h^\omega )_{|\mathcal V})$, one can check that $L(\Phi_3) = 2 \Phi_3$.
\end{proof}

In order to go further with the descriptions of the $\GL(n,\mathbb
H)\SP(1)$\bdash submodules of the space of curvature tensors $\FCur$, we
will need to consider some $\GL(n,\mathbb H)\SP(1)$-maps from $\Lambda^2
\mathcal V^* \otimes \Lambda^2 \mathcal V^*$ to $\FCur$ which are defined
for $b,c \in \Lambda^2 \mathcal V^*$ by:
\begin{gather}
  \label{glnh1}
  \phi (b \otimes c) = 6 b \odot c - b \wedge c,\\
  \label{glnh3} \Phi (b \otimes c) = \sum_{A=I,J,K}
  \begin{aligned}[t]
    & \bigl( 6 (A_{(1)} + A_{(2)}) b \odot (A_{(1)}+ A_{(2)}) c \eqbreak -
    (A_{(1)}+A_{(2)}) b \wedge (A_{(1)}+A_{(2)}) c\bigr),
  \end{aligned}\\
  \label{glnh2}
  \varphi (b\otimes c)(x,y,z,u) = \sum_{A=I,J,K}
  \begin{aligned}[t]
    &\bigl( (A_{(1)} - A_{(2)}) b (x,z) (A_{(1)} - A_{(2)})c(y,u)
    \smalleqbreak - (A_{(1)} - A_{(2)}) b (x,u) (A_{(1)} - A_{(2)}) c(y,z)
    \smalleqbreak + (A_{(1)} - A_{(2)}) c (x,z) (A_{(1)} - A_{(2)}) b(y,u)
    \smalleqbreak - (A_{(1)} - A_{(2)}) c (x,u) (A_{(1)} - A_{(2)}) b(y,z)
    \bigr),
  \end{aligned}
\end{gather}
where we write $b \odot c = 1/2 (b \otimes c + c \otimes b)$ and $x, y, z,
u \in \mathcal V$.  Note that the maps $\phi$, $\varphi$ and $\Phi$
vanish on $\Lambda^2 ( \Lambda^2 \mathcal V^* )$, so we will consider
them as maps $\sym 2 ( \Lambda^2 \mathcal V^* )\to \FCur$.

Other $\GL(n,\mathbb H)\SP(1)$-maps that we will use are defined from $\sym
2 \mathcal V^* \otimes \sym 2 \mathcal V^*$ to $\FCur$.  These maps are
given for $b,c \in \sym 2 \mathcal V^*$ by:
\begin{gather}
  \allowdisplaybreaks
  \label{glnh4}
  \begin{split}
    \psi(b \otimes c) (x,y,z,u) & = b(x,z) c(y,u) - b(x,u) c(y,z) \eqbreak
    + c(x,z) b(y,u) - c(x,u) b(y,z),
  \end{split}\\
  \label{glnh5}
  \vartheta(b \otimes c) = \sum_{A=I,J,K}
  \begin{aligned}[t]
    &\bigl( 6 (A_{(1)} - A_{(2)}) b \odot (A_{(1)} - A_{(2)}) c \eqbreak -
    (A_{(1)} -A_{(2)}) b \wedge (A_{(1)}-A_{(2)}) c \bigr),
  \end{aligned}\\
  \label{glnh6}
  \Psi (b \otimes c) (x,y,z,u) = \sum_{A=I,J,K}
  \begin{aligned}[t]
    &\bigl((A_{(1)} + A_{(2)})b(x,z)(A_{(1)} + A_{(2)}) c(y,u) \eqbreak -
    (A_{(1)} + A_{(2)}) b(x,u) (A_{(1)} + A_{(2)}) c(y,z) \eqbreak +
    (A_{(1)} + A_{(2)}) c(x,z) (A_{(1)} + A_{(2)}) b(y,u) \eqbreak -
    (A_{(1)} + A_{(2)}) c(x,u) (A_{(1)} + A_{(2)}) b(y,z) \bigr),
  \end{aligned}
\end{gather}
for $x,y,z,u \in \mathcal V$.  Analogously, since $\psi$, $\vartheta$ and
$\Psi$ vanish on $\Lambda^2 ( \sym 2 \mathcal V^* )$, we will consider as
defined $\sym 2 ( \sym 2 \mathcal V^* ) \to \FCur$.

Likewise, a fundamental tool that we will use to describe the irreducible
$\GL(n,\mathbb H)\SP(1)$-modules of $\FCur$ is the $\GL(n,\mathbb
H)\SP(1)$-map $L_\sigma \colon \FCur \to \FCur$ which is defined by
\begin{equation}
  \label{secondglnh}
  L_\sigma (R) = \sum_{A=I,J,K}
  \begin{aligned}[t]
    ( &A_{(1)}A_{(2)} +A_{(2)}A_{(3)} \sigma + A_{(1)}A_{(3)} \sigma^2
    \smalleqbreak + A_{(3)}A_{(4)} +A_{(1)}A_{(4)} \sigma + A_{(2)}A_{(4)}
    \sigma^2 ) R,
  \end{aligned}
\end{equation}
where $\sigma = (123)$ is the permutation $1\mapsto 2\mapsto 3\mapsto 1$
and $\sigma R (x,y,z,u) = R(z,x,y,u)$.  As an illustration, $A_{(2)}A_{(3)}
\sigma R (x,y,z,u) = R(Az,x,Ay,u)$.

\begin{proposition}
  \label{glnhsplit2}
  For $L$ and $L_\sigma$ be as above, we have
  \begin{asparaenum}
  \item\label{item:6-12} $\sym 4 E^*$ consists of $R \in \FCur$ such that
    $L(R)=6R$ and $L_\sigma(R) = 12 R$;
  \item\label{item:6-0} $ U^{*22} $ consists of $R \in \FCur$ such that
    $L(R)=6R$ and $L_\sigma(R) = 0$;
  \item\label{item:6-m12} $\Lambda^4 E^*$ consists of $R \in \FCur$ such
    that $L(R)=6R$ and $L_\sigma(R) = -12 R$;
  \item\label{item:2-4} $U^{*31} \sym 2 H$ consists of $R \in \FCur$ such
    that $L(R)=2R$ and $L_\sigma(R) = 4 R$;
  \item\label{item:2-m4} $ U^{*211} \sym 2 H$ consists of $R \in \FCur$
    such that $L(R)=2R$ and $L_\sigma(R) = -4 R$;
  \item\label{item:m6-0} For all $ R \in U^{*22} \sym 4 H$, $L(R)=-6R$ and
    $L_\sigma(R) = 0$.
  \end{asparaenum}
\end{proposition}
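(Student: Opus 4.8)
The plan is to treat $L_\sigma$ exactly as $L$ was treated in Proposition~\ref{glnhsplit1}: show it is a $\GL(n,\mathbb H)\SP(1)$-equivariant endomorphism of $\FCur$, use Schur's lemma to conclude it acts by a scalar on each irreducible summand of \eqref{curvaturedec1}, and then read off the scalars from explicit representatives. First I would verify that $L_\sigma$ does not depend on the adapted basis $I,J,K$ chosen in \eqref{secondglnh}: if $A'=a_{A'}I+b_{A'}J+c_{A'}K$ runs over another adapted basis, then $\sum_{A'}A'_{(i)}A'_{(j)}=\sum_{A}A_{(i)}A_{(j)}$ because the change-of-basis matrix of an adapted basis is orthogonal, while the permutations $\sigma$ and $\sigma^2$ act only on arguments and are structure-independent; hence the same cancellation used for $L$ applies termwise in \eqref{secondglnh}. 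Basis-independence gives equivariance since $\GL(n,\mathbb H)\SP(1)$ permutes adapted bases, and a direct check shows that $L_\sigma(R)$ still satisfies the curvature identities, so $L_\sigma\colon\FCur\to\FCur$.

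Now $L$ and $L_\sigma$ are both $\GL(n,\mathbb H)\SP(1)$-equivariant endomorphisms of $\FCur$, and the six modules $\sym 4 E^*$, $U^{*22}$, $\Lambda^4 E^*$, $U^{*31}\sym 2 H$, $U^{*211}\sym 2 H$, $U^{*22}\sym 4 H$ of \eqref{curvaturedec1} are irreducible and pairwise non-isomorphic (they have distinct $\GL(n,\mathbb H)$- and $\SP(1)$-types), so by Schur's lemma each of them is an eigenspace of $L_\sigma$. Proposition~\ref{glnhsplit1} supplies the $L$-eigenvalues $6,6,6,2,2,-6$ on these modules, so once the $L_\sigma$-eigenvalues are known no two summands share the same pair of eigenvalues, and it only remains to compute the $L_\sigma$-eigenvalue on each.

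To do this I would evaluate $L_\sigma$ on the explicit tensors $\Phi_1,\Phi_2,\Phi_3$ built in the proof of Proposition~\ref{glnhsplit1}. As $a,b,c,d$ vary, $\Phi_1$ spans $U^{*22}\sym 4 H$, so substituting the displayed formula for $\Phi_1$ into \eqref{secondglnh} and collecting terms gives $L_\sigma(\Phi_1)=0$, which is~\itref{item:m6-0}. In the $L=6$ eigenspace, $\Phi_2$ spans $\sym 4 E^*+U^{*22}+\Lambda^4 E^*$; choosing the $E$-factor $a_{\mathbb C}b_{\mathbb C}c_{\mathbb C}d_{\mathbb C}$ to be totally symmetric (for instance $a=b=c=d$), of Young type $(2,2)$, or totally skew isolates the three summands in turn, and evaluating $L_\sigma$ gives the scalars $12$, $0$, $-12$, which is~\itref{item:6-12}--\itref{item:6-m12}. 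In the $L=2$ eigenspace, $\Phi_3$ spans $(U^{*31}+U^{*211})\sym 2 H$; taking its $E$-factor of symmetry type $(3,1)$ and $(2,1,1)$ respectively gives $L_\sigma=4R$ and $L_\sigma=-4R$, which is~\itref{item:2-4}--\itref{item:2-m4}. (One may instead feed simple one- and two-forms into the equivariant maps of \eqref{glnh1}--\eqref{glnh6}, whose images lie in prescribed submodules, and carry out the computation there.)

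The only real work is this last step. Since $L_\sigma$ interleaves the argument-permutation $\sigma$ with the products $A_{(i)}A_{(j)}$, expanding $L_\sigma$ on a monomial such as $\Phi_2$ produces many terms, and the genuine difficulty is bookkeeping which of $I,J,K$ ends up in which argument slot after a permutation. This collapses once one uses that on a complexified one-form $a_{\mathbb C}\in E^*$ the operator $I_{(i)}$ acts in the $i$th slot as multiplication by $-i$, while $J_{(i)}$ and $K_{(i)}$ interchange $a_{\mathbb C}$ with $\widetilde{a_{\mathbb C}}$; each of the six terms of \eqref{secondglnh}, after summing over $A$, then reduces to a sign-and-index count, and adding these counts gives the stated eigenvalues. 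Together with the Schur argument this yields the classification.
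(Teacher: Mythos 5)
Your overall framework (equivariance of $L_\sigma$, Schur's lemma, hence one eigenvalue computation per irreducible summand) matches the paper's, but the computation you propose does not work, and it conceals exactly the two places where real work is needed. The tensors $\Phi_1,\Phi_2,\Phi_3$ are monomials in $\otimes^4\mathcal V^*$, not elements of $\FCur$. For $L$ this is harmless: each $A_{(i)}$ acts only on the $H^*$-factor of the $i$th slot, so $L$ is a sum of slotwise Casimir-type operators and acts by a scalar on any tensor of pure $\SP(1)$-type, whether or not it satisfies the curvature symmetries. $L_\sigma$, by contrast, contains the argument permutations $\sigma,\sigma^2$ and is scalar only on the irreducible summands of $\FCur$ itself. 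Concretely, the $H^*$-part $(\widetilde h^\omega)^{\otimes4}$ of $\Phi_1$ is totally symmetric and each $\sum_A A_{(i)}A_{(j)}$ acts on it by $-1$, so $L_\sigma(\Phi_1)=-2(1+\sigma+\sigma^2)\Phi_1$ with the permutations now acting on the $E^*$-factors; this is not zero for generic $a,b,c,d$. The vanishing of $L_\sigma$ on $U^{*22}\sym 4 H$ is precisely the first Bianchi identity $(1+\sigma+\sigma^2)R=0$, which your test tensors do not satisfy. Likewise, imposing a Young symmetry type on the $E$-factor alone does not land you in $\FCur$: a totally skew $E$-factor tensored with $\omega_H\otimes\omega_H$ is \emph{symmetric}, not antisymmetric, in its first two slots, so it is not even a curvature tensor; each $\GL(n,\mathbb H)$-type occurs with multiplicity at least two in the relevant isotypic part of $\otimes^4\mathcal V^*$ and only one copy lies in $\FCur$, so $L_\sigma$ applied to your representatives need not return a multiple of them.

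Even granting genuine representatives, you have not said how to decide which eigenvalue attaches to which module, which is the delicate point. The paper builds actual curvature tensors via the equivariant maps of \eqref{glnh1}--\eqref{glnh5}, whose images lie in identifiable sums of two irreducibles ($\sym 4 E^*+U^{*22}$, resp.\ $\Lambda^4E^*+U^{*22}$, resp.\ $U^{*31}\sym2H+U^{*211}\sym2H$); it then pins down the labels by matching the common eigenvalue $0$ on the $U^{*22}$ piece across the first two families, and for \itref{item:2-4}--\itref{item:2-m4} it needs the separate q-Ricci computation of Remark~\ref{sigmaus2h} to distinguish $U^{*31}\sym2H$ from $U^{*211}\sym2H$ (and Remark~\ref{lsigmaus4h}, with a representative genuinely inside $U^{*22}\sym 4H$, for \itref{item:m6-0}). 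Your parenthetical remark gestures at this route, but none of these identification steps appear in your argument, so as written the proof has a genuine gap.
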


\begin{proof}
  For \itref{item:6-12}, \itref{item:6-0} and~\itref{item:6-m12}, we
  consider $b_1,c_1 \in \sym 2 E^* \subset \Lambda^2 \mathcal V^* $.  Using
  equations \eqref{glnh1} and~\eqref{glnh2}, it is straightforward to check
  \begin{gather*}
    L_\sigma (4\phi(b_1\odot c_1) + \varphi(b_1\odot c_1)) = 12 ( 4
    \phi(b_1 \odot c_1) + \varphi (b_1\odot c_1)), \\
    L_\sigma (4\phi(b_1 \odot c_1) - \varphi(b_1 \odot c_1)) = 0.
  \end{gather*}
  Note that it is always possible to find $b_1$, $c_1$ such that
  $4\phi(b_1\odot c_1) + \varphi(b_1\odot c_1) \neq 0$ and $4\phi(b_1 \odot
  c_1) - \varphi(b_1 \odot c_1)\neq 0$.

  Since $b_1 \odot c_1 \in \sym 2 ( \sym 2 E^* ) = \sym 4 E^* + U^{*22}$,
  with Schur's Lemma in mind, then $4\phi(b_1\odot c_1) + \varphi(b_1\odot
  c_1) \in \sym 4 E^*$ and $4\phi(b_1\odot c_1) - \varphi(b_1\odot c_1) \in
  U^{*22}$, or $4\phi(b_1\odot c_1) + \varphi(b_1\odot c_1) \in U^{*22}$
  and $4\phi(b_1\odot c_1) - \varphi(b_1\odot c_1) \in \sym 4 E^*$.

  On the other hand, for $b_2,c_2 \in \Lambda^2 E \subset \sym 2 \mathcal
  V^*$, using equations \eqref{glnh4} and~\eqref{glnh5}, it is direct to
  check
  \begin{gather*}
    L_\sigma ( \vartheta(b_2 \odot c_2) + 12 \psi (b_2  \odot c_2) )=0, \\
    L_\sigma ( \vartheta(b_2 \odot c_2) - 12 \psi (b_2 \odot c_2) )= -12 (
    \vartheta(b_2 \odot c_2) - 12 \psi (b_2 \odot c_2) ).
  \end{gather*}
  Likewise, $\vartheta(b_2 \odot c_2) + 12 \psi (b_2 \odot c_2)$ and
  $\vartheta(b_2 \odot c_2) - 12 \psi (b_2 \odot c_2)$ are not always
  vanished.

  Since $b_2 \odot c_2 \in \sym 2 ( \Lambda^2 E^* ) = \Lambda^4 E^* +
  U^{*22}$, we deduce that $\vartheta(b_2 \odot c_2) - 12 \psi (b_2 \odot
  c_2) \in \Lambda^4 E^* $ and $\vartheta(b_2 \odot c_2) + 12 \psi (b_2
  \odot c_2) \in U^{*22}$, or $\vartheta(b_2 \odot c_2) - 12 \psi (b_2
  \odot c_2) \in U^{*22} $ and $\vartheta(b_2 \odot c_2) + 12 \psi (b_2
  \odot c_2) \in \Lambda^4 E^*$.

  Therefore,
  \begin{gather*}
    4\phi(b_1\odot c_1) + \varphi(b_1\odot c_1) \in \sym 4 E^*,\\
    4\phi(b_1\odot c_1) - \varphi(b_1\odot c_1),\ \vartheta(b_2 \odot
    c_2) + 12 \psi (b_2 \odot c_2) \in U^{*22},\\
    \vartheta(b_2 \odot c_2) - 12 \psi (b_2 \odot c_2) \in \Lambda^4 E^*.
  \end{gather*}
  Thus, taking Proposition \ref{glnhsplit1} into account,
  \itref{item:6-12}, \itref{item:6-0} and~\itref{item:6-m12} follow.

  For \itref{item:2-4} and~\itref{item:2-m4}, we consider $b_3 \in \sym 2
  E^* \subset \Lambda^2 \mathcal V^*$ and $c_3 \in \Lambda^2 E^* \sym 2 H$.
  Put $\alpha_1 = 4\phi(b_3\odot c_3) + \varphi(b_3 \odot c_3)$ and
  $\alpha_2 = 4 \phi(b_3\odot c_3) - 3 \varphi(b_3\odot c_3)$.  Then
  equations \eqref{glnh1} and~\eqref{glnh2} give
  \begin{equation*}
    L (\alpha_1) = 4\alpha_1\quad\text{and}\quad
    L (\alpha_2) = - 4\alpha_2,
  \end{equation*}
  so $\alpha_1$ and $\alpha_2$ belong to different irreducible summands of
  the space $\sym 2 E \otimes \Lambda^2 E^* \sym 2H = U^{*211} \sym 2H +
  U^{*31} \sym 2 H \subset \sym 2 ( \Lambda^2 \mathcal V^*)$ that contains
  $b_3\odot c_3$.  In Remark~\ref{sigmaus2h} below we will show that
  $\alpha_1\in U^{*211} \sym 2H$ and $\alpha_2\in U^{*31} \sym 2 H$,
  proving \itref{item:2-4} and~\itref{item:2-m4}.

  Finally, \itref{item:m6-0} will proved below, see
  Remark~\ref{lsigmaus4h}.
\end{proof}

\section{Almost quaternion-Hermitian decomposition of curvature}
\label{sec:aqH}

In~\S\ref{sec:quaternionic}, using the action of the Lie group
$\GL(n,\mathbb H)\SP(1)$ we obtained and described the decomposition of the
space of curvature tensors $\FCur$ given by equation~\eqref{curvaturedec1}.
In this section we will study the decompositions each one of these
submodules under the action of the subgroup $\SP(n)\SP(1)$ of
$\GL(n,\mathbb H)\SP(1)$.  As we have pointed out above, the main
difference between $\SP(n)$ and $\GL(n,\mathbb H)$ is that $\SP(n)$
preserves the complex symplectic form $\omega_E$.  Moreover, we have an
identification $E\cong E^*$ by $\omega_E$ and, consequently, all tensor
modules are identified with their corresponding duals.  Therefore, we will
write
\begin{equation*}
  \FCur = \sym 4 E + U^{22} + \Lambda^4 E + (U^{31} + U^{211})
  \sym 2 H + U^{22} \sym 4 H.
\end{equation*}

On the other hand, the presence of the metric $g=\inp\cdot\cdot$ allows to
work with the Ricci and q-Ricci curvature tensors.  Now we show the
relationships of these tensors with the maps $L$ and $L_\sigma$.

\begin{lemma}
  \label{ricci:LL}
  If $L$ and $L_\sigma$ are the $\SP(n)\SP(1)$-maps defined by equations
  \eqref{firstglnh} and~\eqref{secondglnh}, respectively, then
  \begin{gather*}
    \Ric(L(R))(X,Y) = 3 \Ric (X,Y) + \sum_{A=I,J,K} \Ric (AX,AY),\\
    \Ricq(L(R)) (X,Y) = 3 \Ricq (X,Y) + \sum_{A=I,J,K} \Ricq (AX,AY),\\
    \begin{split}
      \Ric(L_\sigma (R))(X,Y) &= 3 \Ricq (X,Y) + 3 \Ricq(Y,X) - 3 \Ric(X,Y)
      \eqbreak - \sum_{A=I,J,K} \Ric(AX,AY).
    \end{split}
  \end{gather*}
\end{lemma}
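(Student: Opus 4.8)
The plan is to prove all three formulae by direct computation from the definitions \eqref{firstglnh} of $L$, \eqref{secondglnh} of $L_\sigma$, and those of $\Ric$ and $\Ricq$, using only the symmetries of $R\in\FCur$ (skew-symmetry in each of the two pairs of arguments, symmetry under exchange of the pairs, and — for the third formula — the first Bianchi identity), together with the facts that $\{Ae_1,\dots,Ae_m\}$ is again an orthonormal basis and that $A^2=-\mathrm{id}$, $A^{\mathrm{t}}=-A$ for $A=I,J,K$. It is convenient to abbreviate $\Ric^*_A(S)(X,Y)=\sum_i S(X,e_i,AY,Ae_i)$ for any $(0,4)$-tensor $S$, so that $\Ricq(S)=\sum_{A=I,J,K}\Ric^*_A(S)$; one also records the elementary identity $\Ric^*_A(R)(AX,AY)=\Ric^*_A(R)(Y,X)$.

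First I would compute $\Ric$ of each of the six operators $A_{(i)}A_{(j)}R$ ($1\leqslant i<j\leqslant4$) occurring in $L(R)$. Reindexing the contraction by $e_i\mapsto Ae_i$ and applying the symmetries of $R$, one should obtain
\begin{gather*}
  \Ric(A_{(1)}A_{(2)}R)(X,Y)=\Ric^*_A(R)(Y,X),\qquad
  \Ric(A_{(1)}A_{(4)}R)(X,Y)=-\Ric^*_A(R)(Y,X),\\
  \Ric(A_{(2)}A_{(3)}R)(X,Y)=-\Ric^*_A(R)(X,Y),\qquad
  \Ric(A_{(3)}A_{(4)}R)(X,Y)=\Ric^*_A(R)(X,Y),\\
  \Ric(A_{(1)}A_{(3)}R)(X,Y)=\Ric(R)(AX,AY),\qquad
  \Ric(A_{(2)}A_{(4)}R)(X,Y)=\Ric(R)(X,Y).
\end{gather*}
Summing over the six pairs the $\Ric^*_A$-terms cancel, leaving $\Ric(R)(X,Y)+\Ric(R)(AX,AY)$ for each $A$; summing over $A$ gives the first formula. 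For the second formula I would run the same computation with $\Ric^*_B$ in place of $\Ric$ and a further sum over $B=I,J,K$. The new ingredient is that products $AB$, $BA$, $ABA$ of two structures now appear, which by the quaternion relations equal $\pm$ a single structure (the sign depending on whether $A=B$); after summing over the six pairs and over $B$ these ``mixed'' contributions, together with the excess $\Ric^*_A$-terms, cancel, and one is left with $\Ricq(R)(X,Y)+\Ricq(R)(AX,AY)$ for each $A$, hence with the second formula.

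For $\Ric(L_\sigma(R))$ I would apply $\Ric$ to the six terms of \eqref{secondglnh}. The two untwisted terms are handled as above (giving $\Ric^*_A(R)(Y,X)$ and $\Ric^*_A(R)(X,Y)$). For a $\sigma$-twisted term I would first unwind the permutation, for instance $(A_{(2)}A_{(3)}\sigma)R(X,e_i,Y,e_i)=R(AY,X,Ae_i,e_i)$; since the contracted vectors $e_i,Ae_i$ now lie in the \emph{same} pair of arguments, the first Bianchi identity must be used — in the form $\sum_i R(e_i,Ae_i,Z,W)=-2\sum_i R(Z,e_i,Ae_i,W)$ — to bring the contraction back to $\Ric(R)$ or $\Ric^*_A(R)$ form. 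Collecting the six contributions one should find, for each fixed $A$,
\begin{equation*}
  3\Ric^*_A(R)(X,Y)+3\Ric^*_A(R)(Y,X)-\Ric(R)(AX,AY)-\Ric(R)(X,Y),
\end{equation*}
and summing over $A$, using $\sum_A\Ric^*_A=\Ricq$, gives the third formula.

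The only real difficulty is bookkeeping: tracking the signs generated by $A^2=-\mathrm{id}$, by the reindexings $e_i\mapsto Ae_i$, by the anticommutation of distinct structures, and by $\sigma$, and — in the last two formulae — checking that the genuinely mixed contractions cancel; there is no conceptual obstacle. As a consistency check one may take traces, which must yield $\scal(L(R))=6\scal(R)$, $\scalq(L(R))=6\scalq(R)$ and $\scal(L_\sigma(R))=6\scalq(R)-6\scal(R)$.
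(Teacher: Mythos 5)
Your proposal is correct: I checked the six identities for $\Ric(A_{(i)}A_{(j)}R)$, the Bianchi manipulation $\sum_i R(e_i,Ae_i,Z,W)=-2\sum_i R(Z,e_i,Ae_i,W)$, the auxiliary identity $\Ric^*_A(R)(AX,AY)=\Ric^*_A(R)(Y,X)$, and the assembly of the three formulae (including the cancellation of the mixed $\Ric^*_C(A\cdot,A\cdot)$-type terms in the second one), and they all hold. The paper's proof is just the assertion that the lemma follows by straightforward computation, so your argument is exactly that computation carried out, i.e.\ essentially the same approach.
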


\begin{proof}
  It follows by straightforward computation.
\end{proof}

Now we will analyse the behaviour of the different $\GL(n,\mathbb
H)\SP(1)$\bdash submodules of $\FCur$ under the action of $\SP(n)\SP(1)$.
Since contractions by $\omega_E$ on the $\SP(n)$-module $\sym 4 E$ are all
zero, $\sym 4 E$ is also irreducible as an $\SP(n)$-module.  Therefore,
$\sym 4 E \cong \sym 4 E \otimes \mathbb C (\omega_H \otimes \omega_H)$ is
irreducible as an $\SP(n)\SP(1)$-module.

To study $U^{22}$, we consider $U^{22} \subset \sym 2 ( \Lambda^2 E )$ and
the map $\omega_{34} \colon U^{22} \to E \otimes E$ given by contraction
with $\omega_E$ on the $(3,4)$-indices.  One has that $\omega_{34} (U^{22})
= \Lambda^2 E$ and we write $V^{22}= \ker \omega_{34}$.  Therefore,
$U^{22}= V^{22} + \Lambda^2_0 E + \mathbb C \omega_E \otimes \omega_E$ is
the decomposition $U^{22}$ into $\SP(n)$-irreducible modules.  Then, for
$U^{22}$ as submodule of $\FCur$ and $n>1$, we have the following
decomposition into $\SP(n)\SP(1)$-irreducible summands,
\begin{equation*}
  U^{22}= V^{22} + (\Lambda^2_0 E)_a + \mathbb R_a,
\end{equation*}
where we have inserted the subscript~$a$ to distinguish these modules from
other copies of $\Lambda^2_0 E$ and $\mathbb R$ in~$\FCur$.  When $n>1$,
the three summands of the decomposition of $U^{22}$ are non-zero.  However,
if $n=1$, from $\sym 2 (\sym 2 E) = \sym 4 E + U^{22}$, we have $\dim
U^{22} = 1$.  Therefore, for $n=1$,
\begin{gather*}
  U^{22}= \mathbb R_a.
\end{gather*}

For providing detailed descriptions of these modules, in the next
Proposition we will need to consider
\begin{gather*}
  \pi_1(x,y,z,u) = \inp xz \inp yu - \inp xu \inp yz, \\
  \pi_2 = \sum_{A=I,J,K} ( 6 \omega_A \odot \omega_A -\omega_A \wedge
  \omega_A ).
\end{gather*}

\begin{proposition}
  \label{spn:curv1}
  Let $\vartheta$ and $\psi$ be the maps respectively defined by equations
  \eqref{glnh4} and~\eqref{glnh5}, then
  \begin{asparaenum}
  \item\label{item:6-0-0} $ V^{22} $ consists of $R \in \FCur$ such that
    $L(R)=6R$, $L_\sigma(R) = 0$ and $\Ric(R)=0$;
  \item\label{item:20Ea} $(\Lambda^2_0 E)_a$ consists of $R = \vartheta(b
    \otimes g) + 12 \psi(b \otimes g)$, where $b \in \Lambda_0^2 E \subset
    \sym 2 \mathcal V^*$ and $g=\inp\cdot\cdot$ is the metric.  Moreover,
    $\Ric(R)=48(n+1)b$;
  \item\label{item:Ra} $\mathbb R_a = \mathbb R(\vartheta(g \otimes g)+
    12\psi(g \otimes g))= \mathbb R (\pi_2 + 6 \pi_1)$.  Moreover,
    $\Ric(\pi_2 + 6 \pi_1)= 12(2n+1)g$.
  \item\label{item:22aa} If $R \in V^{22} + (\Lambda^2_0 E)_a + \mathbb
    R_a$, then $\Ric = \Ricq \in \Lambda^2_0 E + \mathbb R g$, i.e., for
    $A=I,J,K$, $A \,\Ric = \Ric$.
  \end{asparaenum}
\end{proposition}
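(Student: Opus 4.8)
The plan is to reduce everything to the structure of the single module $U^{22}$. By Proposition~\ref{glnhsplit2}, the conditions $L(R)=6R$ and $L_\sigma(R)=0$ pick out exactly $U^{22}\subset\FCur$, and, as recalled above, $U^{22}=V^{22}+(\Lambda^2_0E)_a+\mathbb R_a$ as $\SP(n)\SP(1)$-modules (the last two summands being absent for $n=1$). So each of the four parts becomes a question about $\Ric$ and $\Ricq$ restricted to $U^{22}$, and the two tools are explicit Ricci contractions and Schur's Lemma. Since $V^{22}$, $\Lambda^2_0E$ (with trivial $H$-action) and $\mathbb R$ are pairwise non-isomorphic and each occurs with multiplicity one in $U^{22}$, any non-zero $\SP(n)\SP(1)$-equivariant map from $\Lambda^2_0E$, resp.\ from the trivial module, into $U^{22}$ is automatically an isomorphism onto $(\Lambda^2_0E)_a$, resp.\ onto $\mathbb R_a$; this is what will pin down the descriptions in~\itref{item:20Ea} and~\itref{item:Ra}.

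For~\itref{item:20Ea} I would take, for $b\in\Lambda^2_0E$, the tensor $R_b=\vartheta(b\otimes g)+12\psi(b\otimes g)$. Because $g$ lies in $\Lambda^2E\subset\sym2\mathcal V^*$ (it corresponds to $\omega_E$ under the $\SP(n)$-identification $E\cong E^*$), the computation in the proof of Proposition~\ref{glnhsplit2} with $(b_2,c_2)=(b,g)$ applies verbatim and gives $R_b\in U^{22}$. The map $b\mapsto R_b$ is $\SP(n)\SP(1)$-equivariant, so by the remark above it suffices to see $R_b\neq0$, and then $b\mapsto R_b$ is an isomorphism $\Lambda^2_0E\to(\Lambda^2_0E)_a$. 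To compute $\Ric(R_b)$ I would use $(A_{(1)}-A_{(2)})b=2\,b(\cdot,A\cdot)$ (valid since $Ab=b$) and $(A_{(1)}-A_{(2)})g=2\omega_A$; the wedge term in $\vartheta$ is a $4$-form and so drops out of any Ricci contraction, and the remaining contractions against an orthonormal basis (using that $\{Ae_i\}_i$ is again orthonormal and $\Tr_g b=0$) give $\Ric(\psi(b\otimes g))=(4n-2)\,b$ and $\Ric(\vartheta(b\otimes g))=72\,b$, hence $\Ric(R_b)=48(n+1)b$; in particular $R_b\neq0$.

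For~\itref{item:Ra} the same computation with $b$ replaced by the metric $g$ yields $\psi(g\otimes g)=2\pi_1$ and $\vartheta(g\otimes g)=4\pi_2$, so $R_g=4(\pi_2+6\pi_1)$. As $R_g$ is $\SP(n)\SP(1)$-invariant it lies in the unique trivial summand $\mathbb R_a$ of $U^{22}$, and from $\Ric(\pi_1)=(4n-1)g$ and $\Ric(\pi_2)=18g$ one gets $\Ric(\pi_2+6\pi_1)=12(2n+1)g\neq0$, so $R_g$ spans $\mathbb R_a$, which is~\itref{item:Ra}. Part~\itref{item:6-0-0} is then immediate: the conditions $L(R)=6R$, $L_\sigma(R)=0$ give $R\in U^{22}$, and since $V^{22}$ does not occur in $\sym2\mathcal V^*=\mathbb R g+\Lambda^2_0E+\sym2E\sym2H$, Schur forces $\Ric|_{V^{22}}=0$, while by~\itref{item:20Ea} and~\itref{item:Ra} $\Ric$ is injective on $(\Lambda^2_0E)_a$ and on $\mathbb R_a$ with images in the complementary pieces $\Lambda^2_0E$ and $\mathbb R g$; hence $\ker(\Ric|_{U^{22}})=V^{22}$.

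Finally, for~\itref{item:22aa} I would feed $R\in U^{22}$ into Lemma~\ref{ricci:LL}. From $L(R)=6R$ the first identity gives $\sum_{A=I,J,K}\Ric(AX,AY)=3\,\Ric(X,Y)$; writing $\Ric=\lambda g+b_0+b_1$ with $b_0\in\Lambda^2_0E$, $b_1\in\sym2E\sym2H$ and using $\sum_A Ab_0=3b_0$, $\sum_A Ab_1=-b_1$, this forces $b_1=0$, so $\Ric\in\mathbb R g+\Lambda^2_0E$ and $A\,\Ric=\Ric$ for $A=I,J,K$. Then $L_\sigma(R)=0$ in the third identity of Lemma~\ref{ricci:LL}, combined with $\sum_A\Ric(AX,AY)=3\Ric(X,Y)$, yields $\Ricq(X,Y)+\Ricq(Y,X)=2\,\Ric(X,Y)$, i.e.\ the symmetric part of $\Ricq$ is $\Ric$; the skew part $\Ricqa$, which by the preliminary discussion always lies in $\Lambda^2_0E\,\sym2H$, is an equivariant function of $R\in U^{22}$ landing in a module that does not occur in $U^{22}$, so it vanishes by Schur. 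Hence $\Ricq=\Ric\in\mathbb R g+\Lambda^2_0E$. (For $n=1$ one has $V^{22}=(\Lambda^2_0E)_a=0$ and $U^{22}=\mathbb R_a$, and the arguments above specialise directly.) I expect the main friction to be exactly the bookkeeping around isomorphism types — confirming that $(\Lambda^2_0E)_a$ and $\mathbb R_a$ are the only submodules of $U^{22}$ of their respective types, and that $\Lambda^2_0E\,\sym2H$ is absent from $U^{22}$ — together with getting the numerical constants in the Ricci contractions right; the contractions themselves are routine.
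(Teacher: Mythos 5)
Your proposal is correct and follows essentially the same route as the paper: the authors likewise reduce everything to the decomposition $U^{22}=V^{22}+(\Lambda^2_0E)_a+\mathbb R_a$, cite the computations $\vartheta(b\odot c)+12\psi(b\odot c)\in U^{22}$ from the proof of Proposition~\ref{glnhsplit2} together with $\psi(g\otimes g)=2\pi_1$, $\vartheta(g\otimes g)=4\pi_2$, and obtain part~\itref{item:22aa} from $L(R)=6R$, $L_\sigma(R)=0$ and Lemma~\ref{ricci:LL}. Your explicit Ricci contractions (in particular $\Ric(\pi_1)=(4n-1)g$, $\Ric(\pi_2)=18g$, $\Ric(\vartheta(b\otimes g))=72b$, $\Ric(\psi(b\otimes g))=(4n-2)b$) and the Schur-type bookkeeping are exactly the details the paper leaves implicit, and they check out.
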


\begin{proof}
  This follows from Propositions \ref{glnhsplit1} and~\ref{glnhsplit2}, the
  considerations in the proof of Proposition~\ref{glnhsplit2}, and the
  facts $\psi(g \otimes g)= 2 \pi_1$ and $\vartheta(g \otimes g) = 4
  \pi_2$.  Part~\itref{item:22aa} is a consequence of
  Proposition~\ref{glnhsplit2}\itref{item:6-0} and Lemma~\ref{ricci:LL}.
\end{proof}

Now let us consider the module $\Lambda^4 E$.  By contracting
with~$\omega_E$, we obtain the decomposition into irreducible
$\SP(n)$-modules given by $\Lambda^4 E = \Lambda^4_0 E + \omega_E \wedge
\Lambda_0^2 E + \mathbb C (\omega_E \wedge \omega_E)$.  Thus it follows
that the decomposition of $\Lambda^4 E \subset \FCur$ into irreducible
$\SP(n)\SP(1)$-modules is given by
\begin{equation}
  \label{l4e}
  \Lambda^4 E = \Lambda^4_0 E + (\Lambda_0^2 E)_b + \mathbb R_b.
\end{equation}
Note that:
\begin{asparaitem}
\item if $n>3$, then each one of the three summands is non-zero;
\item if $n=3$, then $\Lambda^4 E = (\Lambda_0^2 E)_b + \mathbb R_b$;
\item if $n=2$, then $\Lambda^4 E =\mathbb R_b$; and
\item if $n=1$, then $\Lambda^4 E =\{ 0 \}$.
\end{asparaitem}

Next we give more details relative to summands in the right side of
equation~\eqref{l4e}.

\begin{proposition}
  \label{spn:curv2}
  Let $\vartheta$ and $\psi$ be the maps defined respectively by equations
  \eqref{glnh4} and~\eqref{glnh5}, then
  \begin{asparaenum}
  \item\label{item:6-m12-0} $\Lambda^4_0 E $ consists of $R \in \FCur$ such
    that $L(R)=6R$, $L_\sigma(R) = -12 R$ and $\Ric(R)=0$;
  \item\label{item:20Eb} $(\Lambda^2_0 E)_b$ consists of $R=\vartheta(b
    \otimes g) - 12 \psi(b \otimes g)$, where $b \in \Lambda_0^2 E \subset
    \sym 2 \mathcal V^*$.  Moreover, $\Ric(R)=-48(n-2)b$;
  \item\label{item:Rb} $\mathbb R_b = \mathbb R(\vartheta(g \otimes g)-
    12\psi(g \otimes g))= \mathbb R (\pi_2-6 \pi_1)$.  Moreover,
    $\Ric(\pi_2-6 \pi_1)=-24(n-1)g$;
  \item\label{item:40Ebb} If $R \in \Lambda^4_0 E + (\Lambda^2_0 E)_b +
    \mathbb R_b$, then $ \Ric = - \Ricq \in \Lambda^2_0 E + \mathbb R g$,
    i.e., for $A=I,J,K$, $A \,\Ric = \Ric$.
  \end{asparaenum}
\end{proposition}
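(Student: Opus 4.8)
The plan is to follow the pattern of Proposition~\ref{spn:curv1}, since the representation-theoretic work has essentially already been done in Section~\ref{sec:quaternionic}. By Proposition~\ref{glnhsplit2}\itref{item:6-m12} the two conditions $L(R)=6R$ and $L_\sigma(R)=-12R$ cut out exactly the submodule $\Lambda^4 E\subset\FCur$, so everything reduces to analysing $\Ric$ on the three pieces of~\eqref{l4e}. First I would recall from the considerations in the proof of Proposition~\ref{glnhsplit2} that, for $b,c$ in the submodule $\Lambda^2 E\subset\sym 2\mathcal V^*$ (which over $\SP(n)\SP(1)$ splits as $\Lambda^2_0 E+\mathbb R g$), the combination $\vartheta(b\odot c)-12\psi(b\odot c)$ lies in $\Lambda^4 E$ and is not identically zero, while $\vartheta(b\odot c)+12\psi(b\odot c)$ lies in the $U^{22}$ of Proposition~\ref{spn:curv1}. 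Hence $b\mapsto\vartheta(b\otimes g)-12\psi(b\otimes g)$ is an $\SP(n)\SP(1)$-equivariant map $\Lambda^2_0 E\to\Lambda^4 E$, and $g\mapsto\vartheta(g\otimes g)-12\psi(g\otimes g)$ picks out the trivial summand; using $\psi(g\otimes g)=2\pi_1$ and $\vartheta(g\otimes g)=4\pi_2$ from the proof of Proposition~\ref{spn:curv1}, the latter equals $4(\pi_2-6\pi_1)$, which gives the form of $\mathbb R_b$ asserted in~\itref{item:Rb}.

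For the Ricci coefficients I would avoid expanding~\eqref{glnh5} directly and instead exploit Proposition~\ref{spn:curv1}. Forming differences with the $(\Lambda^2_0 E)_a$- and $\mathbb R_a$-tensors of that proposition gives $24\psi(b\otimes g)=\bigl(\vartheta(b\otimes g)+12\psi(b\otimes g)\bigr)-\bigl(\vartheta(b\otimes g)-12\psi(b\otimes g)\bigr)$, and likewise with $b$ replaced by~$g$. A one-line index contraction in~\eqref{glnh4} gives $\Ric(\psi(b\otimes g))=(4n-2)b$ for $b\in\Lambda^2_0 E$ and $\Ric(\psi(g\otimes g))=(8n-2)g$, so combining with $\Ric(\vartheta(b\otimes g)+12\psi(b\otimes g))=48(n+1)b$ and $\Ric(\pi_2+6\pi_1)=12(2n+1)g$ from Proposition~\ref{spn:curv1}\itref{item:20Ea},\itref{item:Ra} produces $\Ric(\vartheta(b\otimes g)-12\psi(b\otimes g))=-48(n-2)b$ and $\Ric(\pi_2-6\pi_1)=-24(n-1)g$ at once. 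For $n>2$ the first is nonzero whenever $b\neq0$ and for $n>1$ the second is a nonzero multiple of~$g$, so the two maps above are injective and their images are precisely $(\Lambda^2_0 E)_b$ and $\mathbb R_b$; this proves~\itref{item:20Eb} and~\itref{item:Rb}. Then~\itref{item:6-m12-0} follows, because $\Lambda^4_0 E=V^{1111}$ does not occur in $\sym 2\mathcal V^*=\mathbb R g+\Lambda^2_0 E+\sym 2 E \sym 2 H$, so $\Ric$ vanishes on it by Schur's Lemma, whereas $\Ric$ is nonzero on each of the other two summands; the exceptional ranges $n\leqslant3$ are covered by the dimension remarks after~\eqref{l4e}.

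Finally, for~\itref{item:40Ebb} I would run the argument of Proposition~\ref{spn:curv1}\itref{item:22aa} with the $L_\sigma$-eigenvalue $-12$ in place of~$0$. For $R\in\Lambda^4 E$, Lemma~\ref{ricci:LL} together with $L(R)=6R$ gives $\sum_{A=I,J,K}\Ric(R)(AX,AY)=3\Ric(R)(X,Y)$, and then $L_\sigma(R)=-12R$ gives $3\bigl(\Ricq(R)(X,Y)+\Ricq(R)(Y,X)\bigr)=-6\Ric(R)(X,Y)$, i.e.\ the symmetric part of $\Ricq(R)$ is $-\Ric(R)$. Splitting the first relation against $\sym 2\mathcal V^*=\bigl(\mathbb R g+\Lambda^2_0 E\bigr)\oplus\sym 2 E \sym 2 H$ forces $\Ric(R)\in\mathbb R g+\Lambda^2_0 E$, that is $A\,\Ric(R)=\Ric(R)$ for $A=I,J,K$; and since $\Lambda^4 E$ carries only trivial $\SP(1)$-summands whereas the skew part $\Ricqa(R)$ lies in $\Lambda^2_0 E \sym 2 H$, equivariance of $\Ricq$ forces $\Ricqa(R)=0$, so $\Ricq(R)$ is symmetric and therefore equals $-\Ric(R)$. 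I expect the only real obstacle to be bookkeeping: the normalisations of $\vartheta$, $\psi$, $\pi_1$, $\pi_2$ must be kept exactly as in Proposition~\ref{spn:curv1}, and one must check that the Ricci coefficients degenerate precisely in the cases $n\leqslant3$ in which the corresponding summands of~\eqref{l4e} disappear.
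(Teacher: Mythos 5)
Your proposal is correct and follows essentially the same route as the paper, which proves this proposition by citing exactly the ingredients you use: Propositions~\ref{glnhsplit1} and~\ref{glnhsplit2}, the placement of $\vartheta(b\odot c)\mp12\psi(b\odot c)$ established in the proof of Proposition~\ref{glnhsplit2}, the identities $\psi(g\otimes g)=2\pi_1$, $\vartheta(g\otimes g)=4\pi_2$, and Lemma~\ref{ricci:LL}. Your Ricci coefficients (via $\Ric(\psi(b\otimes g))=(4n-2)b$ and differencing against Proposition~\ref{spn:curv1}) and the Schur-lemma argument for parts \itref{item:6-m12-0} and \itref{item:40Ebb} all check out.
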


\begin{proof}
  This follows from Propositions \ref{glnhsplit1} and~\ref{glnhsplit2},
  considerations contained in the proof of Proposition~\ref{glnhsplit2} and
  Lemma~\ref{ricci:LL}.
\end{proof}

We have already pointed out that $\sym 2 E \Lambda^2 E = U^{31}+U^{211}$.
Moreover, one can check that $U^{31}= (\sym 3 E \otimes E ) \cap (\sym 2 E
\otimes \Lambda^2 E) $ and $U^{211} = ( E \otimes \Lambda^3 E) \cap (\sym 2
E \otimes \Lambda^2 E)$.  Therefore, contracting with~$\omega_E$, one
obtains the following decompositions into irreducible $\SP(n)$-summands
$U^{31} = V^{31} + \sym 2 E$ and $U^{211} = V^{211} + \sym 2 E +
\Lambda^2_0 E$.  Thus, for the modules $U^{31} \sym 2 H, U^{211} \sym 2 H
\subset \FCur$, we have the following decompositions into irreducible
$\SP(n)\SP(1)$-modules,
\begin{gather}
  \label{Utresuno}
  U^{31} \sym 2 H = V^{31} \sym 2 H + (\sym 2 E \sym 2 H)_a, \\
  \label{Udosunouno}
  U^{211} \sym 2 H = V^{211} \sym 2 H + (\sym 2 E \sym 2 H)_b + \Lambda^2_0
  E \sym 2 H.
\end{gather}
All of this happens for high dimensions.  However for low dimensions some
particular cases must be pointed out.
\begin{asparaitem}
\item For $U^{31}$.  If $n>1$, the two summands $V^{31}$ and $\sym 2 E$ are
  non-zero.  If $n=1$, then $\Lambda^2 E = \mathbb R \omega_E$ and $U^{31}
  = \sym 2E$.  Therefore, for $n=1$, we have
  \begin{equation*}
    U^{31} \sym 2 H = (\sym 2 E \sym 2 H)_a.
  \end{equation*}
\item For $U^{211}$.  If $n>2$, the three summands $V^{211}$, $\sym 2 E$
  and $\Lambda_0^2$ are non-zero.  If $n=2$, then $\Lambda^3 E = E \wedge
  \omega_E$ and $U^{211} = \sym 2 E + \Lambda^2_0 E$.  Therefore, for
  $n=2$, we have
  \begin{equation*}
    U^{211} \sym 2 H = (\sym 2 E \sym 2 H)_b + \Lambda^2_0 E \sym 2 H.  
  \end{equation*}
  If $n=1$, then $\Lambda^3 E = \{0\}$.  Therefore, $U^{211}=\{0\}$ and $
  U^{211} \sym 2 H=\{0\}$.
\end{asparaitem}

\begin{remark}
  \label{sigmaus2h}
  At this point we can complete the proof of parts \itref{item:2-4}
  and~\itref{item:2-m4} of Proposition~\ref{glnhsplit2}.  In fact, for $b
  \in \sym 2 E \subset \Lambda^2 \mathcal V^*$ and $c \in \Lambda^2 E \sym
  2 H\subset \Lambda^2 \mathcal V^*$, it is straightforward to check
  \begin{equation*}
    \Ricq(4 \phi(b \odot c) + \varphi (b\odot c))= 16 e_i \hook b \odot
    e_i \hook c - 8 \sum_{A=I,J,K} \inp{\omega_A}c A_{(1)}b,
  \end{equation*}
  where $\hook$ denotes contraction.  Thus, we have $\Ricq(4 \phi(b\odot c)
  + \varphi (b\odot c)) \in \sym 2 E \sym 2 H$.

  On the other hand,
  \begin{equation*}
    \begin{split}
      \Ricq(4 \phi(b \odot c) - 3 \varphi (b\odot c)) & = -40 e_i \hook b
      \otimes e_i \hook c + 24 e_i \hook c \otimes e_i \hook b \eqbreak - 8
      \sum_{A=I,J,K} \inp{\omega_A}c A_{(1)}b,
    \end{split}
  \end{equation*}
  which can have non-zero components in both $\sym 2 E \sym 2 H$ and
  $\Lambda_0^2 E \sym 2 H$.  Thus $\Ricq(4 \phi(b \odot c) - 3 \varphi (b
  \odot c)) \in \sym 2 E \sym 2 H + \Lambda_0^2 E \sym 2 H$.  All of this,
  taking equations \eqref{Utresuno} and~\eqref{Udosunouno} into account,
  implies $4 \phi(b \odot c) + 3 \varphi (b \odot c)) \in U^{31} \sym 2 H$
  and $4 \phi(b \odot c) - 3 \varphi (b \odot c) \in U^{211} \sym 2 H$.
\end{remark}

Now, we show more details for the summands of equations \eqref{Utresuno}
and~\eqref{Udosunouno}.

\begin{proposition}
  \label{spn:curv3}
  Let $\vartheta$ and $\psi$ be the maps defined respectively by equations
  \eqref{glnh4} and~\eqref{glnh5}, then
  \begin{asparaenum}
  \item\label{item:2-4-0} $V^{31} \sym 2 H $ consists of $R \in \FCur$ such
    that $L(R)=2R$, $L_\sigma(R) = 4 R$ and $\Ric(R)=0$;
  \item\label{item:22} $(\sym 2 E \sym 2 H)_a$ consists of $R=\vartheta(b
    \otimes g) + 4 \psi(b \otimes g)$, where $b \in \sym 2 E \sym 2 H
    \subset \sym 2 \mathcal V^*$.  Moreover, $\Ric(R) = 16 (n+1)b$;
  \item\label{item:2-m4-0} $V^{211} \sym 2 H $ consists of $R \in \FCur$
    such that $L(R)=2R$, $L_\sigma(R) = -4 R$ and $\Ric(R)=0$;
  \item\label{item:22b} $(\sym 2 E \sym 2 H)_b$ consists of $R=\vartheta(b
    \otimes g) - 12 \psi(b \otimes g)$, where $b \in \sym 2 E \sym 2 H
    \subset \sym 2 \mathcal V^*$.  Moreover, $\Ric(R) = -48
    (n-1)b$;
  \item\label{item:202} $\Lambda_0^2 E \sym 2 H$ consists of $R$ such that
    \begin{equation*}
      R = \sum_{A=I,J,K} \bigl(6 (A_{(1)} + A_{(2)} ) b \odot \omega_A
      - (A_{(1)} + A_{(2)} ) b \wedge \omega_A \bigr),
    \end{equation*}
    where $b \in \Lambda_0^2 E \sym 2 H \subset \Lambda^2 \mathcal V^*$.
    Moreover, $\Ricq (R) = -16 n b$;
  \item\label{item:3122a} if $R \in V^{31} \sym 2 H + (\sym 2 E \sym 2
    H)_a$, then $ \Ric = \Ricq \in \sym 2 E \sym 2 H$;
  \item\label{item:211b} if $R \in V^{211} \sym 2 H + (\sym 2 E \sym 2 H)_b
    + \Lambda_0^2 E \sym 2 H$ and $\Ricq_s$ denotes the symmetric part of
    $\Ricq$, then $ \Ric = - 3 \Ricq_s \in \sym 2 E \sym 2 H$.
  \end{asparaenum}
\end{proposition}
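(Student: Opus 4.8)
The plan is to treat the six parts by the same strategy already deployed in Propositions~\ref{spn:curv1} and~\ref{spn:curv2}: identify each irreducible summand of \eqref{Utresuno} and~\eqref{Udosunouno} by a pair of eigenvalue/contraction conditions, then produce an explicit generic element and compute its Ricci (or q-Ricci) tensor. For \itref{item:2-4-0} and~\itref{item:2-m4-0}, the modules $V^{31}\sym 2H$ and $V^{211}\sym 2H$ are precisely the $\SP(n)$-primitive parts of $U^{31}\sym 2H$ and $U^{211}\sym 2H$, so they are killed by every $\omega_E$-contraction; since $\Ric$ is built from such a contraction on an $\SP(n)\SP(1)$-map $\FCur\to\sym 2\mathcal V^*$, Schur's Lemma forces $\Ric$ to vanish on these two modules. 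The $L$- and $L_\sigma$-eigenvalues then come straight from Proposition~\ref{glnhsplit2}\itref{item:2-4} and~\itref{item:2-m4}, once Remark~\ref{sigmaus2h} is in hand, which it is.

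For the ``complemented'' summands \itref{item:22} and~\itref{item:22b}, I would take $b\in\sym 2E\sym 2H\subset\sym 2\mathcal V^*$ and form the combinations $\vartheta(b\otimes g)+4\psi(b\otimes g)$ and $\vartheta(b\otimes g)-12\psi(b\otimes g)$. Arguing as in Remark~\ref{sigmaus2h} and Propositions~\ref{spn:curv1}--\ref{spn:curv2}, one checks via \eqref{glnh4} and~\eqref{glnh5} that each lands in the relevant $U^{\ast}$-summand and, by the $\SP(1)$-type of $b$, inside the $(\sym 2E\sym 2H)$-copy rather than anything else; Schur then pins the coefficient. The Ricci constants $16(n+1)$ and $-48(n-1)$ are obtained by a direct trace computation using $\Ric(\psi(b\otimes c))$ and $\Ric(\vartheta(b\otimes c))$ evaluated on $b\in\sym 2E\sym 2H$, $c=g$: one needs the elementary identities $e_i\hook g = e_i$, $\Tr b = 0$ (since $\sum_A Ab=-b$ forces vanishing trace), and the relation $\sum_{A=I,J,K}\inp{\omega_A}{\omega_A}=12n$. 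The normalisation choices ($+4$ versus $-12$) are exactly those making the two copies of $\sym 2E\sym 2H$ inequivalent under $L$, consistent with Proposition~\ref{glnhsplit2}.

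Part~\itref{item:202} is different in character: $\Lambda^2_0E\sym 2H$ is a skew-symmetric module, so one builds $R$ from $b\in\Lambda^2_0E\sym 2H\subset\Lambda^2\mathcal V^*$ wedged and symmetrised against the K\"ahler forms $\omega_A$, using the operator $\sum_A\bigl(6(A_{(1)}+A_{(2)})b\odot\omega_A-(A_{(1)}+A_{(2)})b\wedge\omega_A\bigr)$, which is the $\QK$-flavoured analogue of $\Phi$ in \eqref{glnh3} with one slot fixed to $\omega_A$. Here $\Ric(R)=0$ automatically (the symmetric $\omega_E$-contraction vanishes on a genuinely skew module), and the content is the q-Ricci value $-16nb$, computed by expanding $\Ricq$ on this expression: the $A_{(1)}+A_{(2)}$ operator interacts with the extra $A$'s hidden inside $\Ricq=\sum_B R(\cdot,e_i,B\cdot,Be_i)$ to produce a multiple of $b$, with the factor $-16n$ coming from $\sum_B\sum_A(\text{sign factors})\inp{\omega_A}{\omega_B}$ together with the contraction of $\omega_A$ against the frame. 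Finally \itref{item:3122a} and~\itref{item:211b} follow formally: on $V^{31}\sym 2H\oplus(\sym 2E\sym 2H)_a$ we have $L(R)=2R$, and feeding this into Lemma~\ref{ricci:LL} gives $\sum_A\Ric(AX,AY)=-\Ric(X,Y)$ and the same for $\Ricq$, so both tensors lie in $\sym 2E\sym 2H$; the third equation of Lemma~\ref{ricci:LL} combined with $L_\sigma$ acting as $\pm 4$ then yields a linear relation between $\Ric$ and $\Ricq_s$, which is $\Ric=\Ricq$ in the $U^{31}$ case and $\Ric=-3\Ricq_s$ in the $U^{211}$ case.

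The main obstacle will be part~\itref{item:202}: keeping track of the signs and the precise combinatorial factor $-16n$ when $\Ricq$ is applied to the skew expression is delicate, because the three almost complex structures enter simultaneously through both the defining operator $A_{(1)}+A_{(2)}$ and the summation $\sum_B$ inside $\Ricq$, and one must use $\inp{\omega_A}{\omega_B}=\delta_{AB}\cdot 2n$ (for an adapted orthonormal-type basis) together with the orthogonality relations among the $\Ric^*_A$ recorded in~\S\ref{sec:preliminaries}. The bookkeeping for \itref{item:211b} is the second most error-prone point, since the coefficient $-3$ must emerge cleanly from the two Lemma~\ref{ricci:LL} identities; everything else is routine tensor algebra of the kind already carried out in the preceding propositions.
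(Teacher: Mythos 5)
Your proposal is correct and follows essentially the same route as the paper: the paper's proof simply verifies that $L_\sigma$ acts with eigenvalues $4$ and $-4$ on $\vartheta(b\otimes g)+4\psi(b\otimes g)$ and $\vartheta(b\otimes g)-12\psi(b\otimes g)$ for $b\in\sym 2E\sym 2H$, and then invokes Propositions~\ref{glnhsplit1} and~\ref{glnhsplit2} together with Lemma~\ref{ricci:LL}, exactly as you do (your Schur argument for the vanishing of $\Ric$ on $V^{31}\sym 2H$ and $V^{211}\sym 2H$, and your derivation of $\Ric=\Ricq$ and $\Ric=-3\Ricq_s$ from the three identities of Lemma~\ref{ricci:LL}, are the intended content of that citation). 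The only cosmetic slip is that the two copies of $\sym 2E\sym 2H$ are distinguished by $L_\sigma$, not by $L$, which takes the value $2$ on both.
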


\begin{proof}
  For $b \in \sym 2 E \sym 2 H \subset \sym 2 \mathcal V^*$, it is not hard
  to check that
  \begin{gather*}
    L_\sigma (\vartheta(b \otimes g) + 4 \psi(b \otimes g)) = 4 (
    \vartheta(b \otimes g) +  4 \psi(b \otimes g) ),\\
    L_\sigma (\vartheta(b \otimes g) - 12 \psi(b \otimes g)) = - 4 (
    \vartheta(b \otimes g) - 12 \psi(b \otimes g)).
  \end{gather*}
  Now, all parts follow from Propositions \ref{glnhsplit1}
  and~\ref{glnhsplit2} and Lemma~\ref{ricci:LL}.
\end{proof}

Since we have already shown the $\SP(n)$-decomposition $U^{22}= V^{22} +
\Lambda^2_0 E + \mathbb C \omega_E \otimes \omega_E$, then, for $U^{22}
\sym 4 H \subset \FCur$ and $n>1$, we obtain
\begin{equation*}
  U^{22} \sym 4 H = V^{22} \sym 4 H + \Lambda^2_0 E \sym 4 H + \sym 4 H.
\end{equation*}
For $n=1$, as it was above pointed out, $U^{22}= \mathbb C$.  Therefore,
for $n=1$, we have
\begin{equation*}
  U^{22} \sym 4 H = \sym 4 H.
\end{equation*}

\begin{proposition}
  \label{spn:curv4}
  \begin{asparaenum}
  \item\label{item:m6} $V^{22} \sym 4 H $ consists of $R \in \FCur$ such
    that $L(R)=-6R$ and, for $A=I,J,K$, $\Ric^*_A (R)=0$;
  \item\label{item:204} $\Lambda_0^2 E \sym 4 H$ consists of $R$ such that
    \begin{equation}
      \label{les4h}
      R = \sum_{A=I,J,K} ( 6 b_A \odot \omega_A - b_A \wedge \omega_A ),
    \end{equation}
    where $b_I, b_J, b_K \in \Lambda_0^2 E \sym 2 H \subset \Lambda^2
    \mathcal V^*$ are such that $\sum_{A=I,J,K} A_{(2)} b_A = 0$;
  \item\label{item:4} $\sym 4 H $ consists of $R $ such that
    \begin{equation}
      \label{s4h}
      R = \sum_{A=I,J,K} ( 6 b_A \odot \omega_A - b_A \wedge \omega_A ),
    \end{equation}
    where $b_I, b_J, b_K \in \sym 2 H \subset \Lambda^2 \mathcal V^*$ are
    such that $\sum_{A=I,J,K} A_{(2)} b_A = 0$.
  \end{asparaenum}
\end{proposition}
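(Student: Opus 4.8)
The plan is to build on Proposition~\ref{glnhsplit1}\itref{item:4H}, which (after the identification $E\cong E^*$) says that $U^{22}\sym 4 H$ is exactly the eigenspace $\{R\in\FCur:L(R)=-6R\}$, together with the already-recorded decomposition $U^{22}\sym 4 H=V^{22}\sym 4 H+\Lambda_0^2 E\sym 4 H+\sym 4 H$ into $\SP(n)\SP(1)$-irreducibles (some summands vanishing for low $n$); by Proposition~\ref{glnhsplit2}\itref{item:m6-0}, $L_\sigma$ is identically zero on this space, so it carries no information. The key preliminary observation is that every summand above has $\SP(1)$-type $\sym 4 H$, whereas the targets of the equivariant maps $\Ric$ and $\Ricq$ involve only the $\SP(1)$-types $\mathbb R$ and $\sym 2 H$; hence by Schur's Lemma $\Ric(R)=\Ricq(R)=0$ for all $R\in U^{22}\sym 4 H$, and in particular $\sum_{A=I,J,K}\Ric^*_A(R)=0$ there. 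So the finer splitting can only be detected through the individual (non-equivariant) operators $\Ric^*_A$ and, on the $\SP(n)$-side, through the $\omega_E$-contractions of the $U^{22}$-part of~$R$.

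For parts \itref{item:204} and~\itref{item:4} I would put $R=\sum_{A=I,J,K}\phi(b_A\otimes\omega_A)=\sum_{A=I,J,K}(6\,b_A\odot\omega_A-b_A\wedge\omega_A)$, with $\phi$ the map~\eqref{glnh1}. Since $\phi\colon\sym 2(\Lambda^2\mathcal V^*)\to\FCur$, every such $R$ lies in $\FCur$ and depends only on the symmetric part of the datum $(b_A)_A$; that datum sits in $\sym 2 H\otimes\sym 2 H$ (part~\itref{item:4}), respectively $\sym 2 H\otimes(\Lambda_0^2 E\sym 2 H)$ (part~\itref{item:204}), and splits according to $\sym 2 H\otimes\sym 2 H=\sym 4 H+\sym 2 H+\mathbb R$. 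A short computation with the table $I_{(2)}\omega_I=g$, $I_{(2)}\omega_J=\omega_K$, $I_{(2)}\omega_K=-\omega_J$ and its cyclic permutations shows that the operator $\sum_A A_{(2)}(\cdot)$ kills the $\sym 4 H$-isotypic part and is injective on the other two, so the constraint $\sum_{A=I,J,K}A_{(2)}b_A=0$ carves out an irreducible module isomorphic to $\sym 4 H$, respectively $\Lambda_0^2 E\sym 4 H$. Because $\phi$ is $\SP(n)\SP(1)$-equivariant, the image of the constrained module lands in the $\sym 4 H$-isotypic part of $\FCur$, that is, inside $U^{22}\sym 4 H$, so $L(R)=-6R$ automatically. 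The resulting linear map from the constrained module to $\FCur$ is, by Schur, either zero or injective; it is not zero, as the choice $b_I=\omega_I$, $b_J=-\omega_J$, $b_K=0$ gives $R=6(\omega_I\otimes\omega_I-\omega_J\otimes\omega_J)\neq0$ (using $\omega_I\wedge\omega_I=\omega_J\wedge\omega_J$). Comparing dimensions with the decomposition of $U^{22}\sym 4 H$ identifies the image with the summand $\sym 4 H$, respectively $\Lambda_0^2 E\sym 4 H$, proving both inclusions asserted in~\itref{item:204} and~\itref{item:4}.

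For part \itref{item:m6} I would use that $V^{22}\subset U^{22}$ is by definition $\ker\omega_{34}$, the kernel of $\omega_E$-contraction on the last pair of $E$-indices. Regard $R\in U^{22}\sym 4 H$ as a tensor in $U^{22}\otimes\sym 4 H\subset\sym 2(\Lambda^2 E^*)\otimes\sym 4 H$. Since each $A$ acts as the identity on the $E$-factor, while the bivector $\sum_i e_i\otimes Ae_i$ corresponds, up to sign, to the two-form $\omega_A=\omega_E\otimes\sigma_A$ with $\sigma_A\in\sym 2 H$, the operator $\Ric^*_A$ contracts the $E$-factors of $R$ against $\omega_E$ and its $H$-factors against $\sigma_A$. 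Transporting this contraction to the $(3,4)$-slots by the curvature symmetries and the first Bianchi identity shows $\Ric^*_A(R)=0$ for $A=I,J,K$ whenever $\omega_{34}R=0$, i.e.\ for $R\in V^{22}\sym 4 H$. Conversely, on $\Lambda_0^2 E\sym 4 H$ and on $\sym 4 H$ the contraction $\omega_{34}$ of the $U^{22}$-part is nonzero (it surjects onto $\Lambda_0^2 E$, respectively onto $\mathbb C\omega_E$), and since $\sigma_I,\sigma_J,\sigma_K$ span $\sym 2 H$, the joint map $R\mapsto(\Ric^*_I(R),\Ric^*_J(R),\Ric^*_K(R))$ is nonzero on each of these irreducible modules, hence injective there. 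Thus $L(R)=-6R$ together with $\Ric^*_A(R)=0$ for $A=I,J,K$ singles out exactly $V^{22}\sym 4 H$.

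I expect the main obstacle to be this last step: making rigorous, via the curvature symmetries and the Bianchi identity, that the single $(2,4)$-contraction buried in $\Ric^*_A$ really detects the contraction $\omega_{34}$ used to define $V^{22}$. Once that bridge is built, everything else is representation theory — Schur's Lemma and the dimension counts already in place. A lesser bookkeeping point is matching the constraint $\sum_A A_{(2)}b_A=0$ with the projection onto the $\sym 4 H$-isotypic part of the datum space, handled by the $A_{(2)}\omega_B$ table above.
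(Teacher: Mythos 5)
Your treatment of parts \itref{item:204} and \itref{item:4} is in substance the paper's own: the paper also writes the candidate tensors as $R=\sum_A(6\,b_A\odot\omega_A-b_A\wedge\omega_A)$, checks $L(R)=-6R$ directly, and identifies the constraint $\sum_AA_{(2)}b_A=0$ with picking out the $\sym 4H$-isotypic part of the data $(b_A)$; your extra Schur-lemma scaffolding (multiplicity-freeness of $U^{22}\sym 4H=V^{22}\sym 4H+\Lambda^2_0E\sym 4H+\sym 4H$) is a legitimate way to avoid some of the direct verification. One small slip: $\omega_I\wedge\omega_I\neq\omega_J\wedge\omega_J$ for $n>1$ (only the sum $\Omega$ is invariant), but your nonvanishing example $b_I=\omega_I$, $b_J=-\omega_J$, $b_K=0$ still gives $R\neq0$ since the $\odot$-parts already fail to cancel.

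Where you genuinely diverge, and where the gap sits, is part \itref{item:m6}. You propose to show $\Ric^*_A=0$ on $V^{22}\sym 4H$ by ``transporting'' the $(2,4)$-contraction in $\Ric^*_A$ to the $(3,4)$-contraction $\omega_{34}$ defining $V^{22}$ via the curvature symmetries and the Bianchi identity, and you yourself flag that this bridge is not built. The paper avoids it entirely: it computes $\Ric^*_A$ explicitly on the tensors of \itref{item:204} and \itref{item:4}, obtaining $\Ric^*_A=-4(n+1)A_{(2)}b_A$ on $\Lambda^2_0E\sym 4H$ and $\Ric^*_I=4(n+1)(\lambda_{II}g+\lambda_{KI}\omega_J-\lambda_{JI}\omega_K)$ on $\sym 4H$, so the joint map $R\mapsto(\Ric^*_I,\Ric^*_J,\Ric^*_K)$ is visibly injective on those two summands; these explicit formulas are also reused later in the paper. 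For the vanishing on $V^{22}\sym 4H$ the clean argument (which makes your unbuilt bridge unnecessary) is that the triple $(\Ric^*_I,\Ric^*_J,\Ric^*_K)$ assembles into an $\SP(n)\SP(1)$-equivariant map $\FCur\to\otimes^2\mathcal V^*\otimes\sym 2H$, and the target contains no $V^{22}$-isotypic component, so by Schur it kills $V^{22}\sym 4H$. If you replace your Bianchi-transport step by this observation, your proof of \itref{item:m6} closes; as written, that step is the one genuine hole.
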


\begin{proof}
  For~\itref{item:204}, if $R$ is given by equation~\eqref{les4h}, it is
  straightforward to check $L(R)=-6R$.  On the other hand, it is not hard
  to obtain, for $A=I,J,K$,
  \begin{gather*}
    \Ric^*_A = - 4 (n+1) A_{(2)} b_A \in \sym 2 E \sym 2 H + \Lambda_0^2 E
    \sym 2 H.
  \end{gather*}
  Hence $\Ricq=0$, but the local Ricci tensors $\Ric^*_A$ are not
  necessarily zero.

  For~\itref{item:4}, if $R$ is given by equation~\eqref{s4h}, where $ b_I
  = \lambda_{II} \omega_I + \lambda_{JI} \omega_J + \lambda_{KI} \omega_K$.
  It is also straightforward to check $L(R)=-6R$.  In this case, we have
  \begin{equation*}
    \Ric^*_I = 4 (n+1) ( \lambda_{II} g + \lambda_{KI} \omega_J -
    \lambda_{JI} \omega_K) \in \mathbb R g + \sym 2H,
  \end{equation*}
  and also $\Ricq=0$.  Since there are curvature tensors in the conditions
  of~\itref{item:m6}, part~\itref{item:m6} follows.
\end{proof}

\begin{remark}
  \label{lsigmaus4h}
  Now we will prove that $L_\sigma (R) =0$, for all $R \in U^{*22} \sym 4
  H$.  In fact, we consider $ R_1 = 6 \omega_I \odot \omega_J - \omega_I
  \wedge \omega_J \in \sym 4 H \subset U^{*22} \sym 4 H$.  It is direct to
  check $L_\sigma (R_1) =0$.  By Schur's Lemma, the assertion follows.
\end{remark}
  
\begin{remark}
  For a fixed adapted basis $I$, $J$, $K$ of $\mathcal G$, if $R \in
  \Lambda^2_0 E \sym 4 + \sym 4H$, then $R$ is determined by a unique
  triple $b_I$, $b_J$, $b_K$.  In \itref{item:4} of Proposition
  \ref{spn:curv4}, we can write the condition $\sum_{A=I,J,K}
  \inp{b_A}{\omega_A} = 0$ instead of $\sum_{A=I,J,K} A_{(2)} b_A = 0$, but
  in such a case more than one triple $b_I$, $b_J$, $b_K$ can determine the
  same element of $\sym 4H$.
\end{remark}

In summary, relative to the $\SP(n)\SP(1)$-decomposition of the space of
Riemannian curvature tensors $\FCur$, we have the following cases:
\begin{asparaitem}
\item if the dimension of $\mathcal{V}$ is strictly greater than $12$,
  $n>3$, then
  \begin{equation*}
    \begin{split}
      \FCur & = \sym 4 E + \bigl(\mathbb R_a + (\Lambda^2_0 E)_a +
      V^{22}\bigr) + \bigl( \mathbb R_b + (\Lambda_0^2 E)_b + \Lambda^4_0 E
      \bigr) \smalleqbreak + \bigl ((\sym 2 E \sym 2 H)_a + V^{31} \sym 2 H
      \bigr) + \bigl((\sym 2 E \sym 2 H)_b + \Lambda^2_0 E \sym 2 H +
      V^{211} \sym 2 H\bigr)\smalleqbreak + \bigl(\sym 4 H + \Lambda^2_0 E
      \sym 4 H + V^{22} \sym 4 H\bigr);
    \end{split}
  \end{equation*}
\item if the dimension of $\mathcal{V}$ is $12$, $n=3$, then
  \begin{equation*}
    \begin{split}
      \FCur & = \sym 4 E + \bigl(\mathbb R_a + (\Lambda^2_0 E)_a +
      V^{22}\bigr) + \bigl(\mathbb R_b + (\Lambda_0^2 E)_b\bigr)
      \smalleqbreak + \bigl((\sym 2 E \sym 2 H)_a + V^{31} \sym 2 H\bigr) +
      \bigl((\sym 2 E \sym 2 H)_b + \Lambda^2_0 E \sym 2 H + V^{211} \sym 2
      H\bigr) \smalleqbreak + \bigl( \sym 4 H + \Lambda^2_0 E \sym 4 H +
      V^{22} \sym 4 H\bigr);
    \end{split}
  \end{equation*}
\item if the dimension of $\mathcal{V}$ is $8$, $n=2$, then
  \begin{equation*}
    \begin{split}
      \FCur & = \sym 4 E + \bigl(\mathbb R_a + (\Lambda^2_0 E)_a +
      V^{22}\bigr) + \mathbb R_b \smalleqbreak + \bigl((\sym 2 E \sym 2
      H)_a + V^{31} \sym 2 H\bigr) + \bigl((\sym 2 E \sym 2 H)_b +
      \Lambda^2_0 E \sym 2 H \bigr) \smalleqbreak + \bigl( \sym 4 H +
      \Lambda^2_0 E \sym 4 H + V^{22} \sym 4 H \bigr);
    \end{split}
  \end{equation*}
\item and, if the dimension of $\mathcal{V}$ is $4$, $n=1$, then
  \begin{equation*}
    \FCur  =  \sym 4 E +  \mathbb R_a +  (\sym 2 E \sym 2 H)_a  +
    \sym 4 H.
  \end{equation*}
\end{asparaitem}

\section{Intrinsic torsion}
\label{sec:torsion}

Let $G$ be a subgroup of the linear group $\GL(m,\mathbb R)$.  A
manifold~$M$ is said to be equipped with a $G$-structure, if there is a
principal $G$-subbundle $P\to M$ of the principal frame bundle.  In this
situation, there always exist connections, called \emph{$G$-connections},
defined on the subbundle~$P$.  Moreover, if $(M^m, g=\inp\cdot\cdot)$ is an
orientable $m$-dimensional Riemannian manifold and $G$~is a closed and
connected subgroup of~$\SO(m)$, then there exists a unique metric
$G$-connection~$\Nt$ such that $\xi_x = \Nt_x - \nabla_x$ takes its values
in~$\lie g^\perp$, where $\lie g^\perp$ denotes the orthogonal complement
in~$\so(m)$ of the Lie algebra~$\lie g$ of~$G$ and $\nabla$~denotes the
Levi-Civita connection \cite{Salamon:holonomy,Cleyton-S:intrinsic}.  The
tensor~$\xi$ is the \emph{intrinsic torsion} of the $G$-structure and
$\Nt$~is called the \emph{minimal $G$-connection}.

A $4n$-dimensional manifold~$M$, $n > 1$, is said to be \emph{almost
quaternion\bdash Hermitian}, if $M$ is equipped with an
$\SP(n)\SP(1)$\bdash structure.  This is equivalent to the presence of a
Riemannian metric~$g=\inp\cdot\cdot$ and a rank-three subbundle~$\mathcal
G$ of the endomorphism bundle~$\End TM$, such that locally $\mathcal G$ has
an \emph{adapted basis} $I$, $J$, $K$ satisfying $I^2 = J^2 = -1$ and $K =
IJ = -JI$, and $\inp{AX}{AY} = \inp XY$, for all $X,Y \in T_x M$ and $A
=I,J,K$.  An almost quaternion-Hermitian manifold with a global adapted
basis is called an \emph{almost hyperHermitian} manifold.  The manifold is
then equipped with an $\SP(n)$-structure.

On each point~$p$ of these manifolds, the tangent space $T_p M$ can be
identified with the vector space~$\mathcal V$ of the previous section.
Thus there are three local K\"ahler-forms $\omega_A (X,Y) = \inp X{AY}$,
$A=I,J,K$.  From these one may define a global, non-degenerate
four-form~$\Omega$, the \emph{fundamental form}, by the local
formula~\eqref{fundfourform}.

In this section, we will recall some information about the intrinsic
torsion of almost quaternion-Hermitian manifolds.  More details may be
found in~\cite{Cabrera-S:aqH-torsion}, where it is also explained how to
explicitly compute the intrinsic torsion via the exterior algebra.

A connection~$\Nt$ is an $\SP(n)\SP(1)$-connection, if $\Nt \Omega =0$ or,
equivalently, if for any point of the manifold there exists a local adapted
basis $I$, $J$, $K$ of~$\mathcal G$ such that
\begin{gather*}
  (\Nt_X I)Y = \lambda_K(X) JY - \lambda_J(X) KY,\\
  (\Nt_X J)Y = \lambda_I(X) KY - \lambda_K(X) IY,\\
  (\Nt_X K)Y = \lambda_J(X) IY - \lambda_I(X) JY.
\end{gather*}
With respect to the Levi-Civita connection one then has formul\ae\ such as
\begin{equation}
  \label{torsion:sp(n)sp(1)}
  (\nabla_X I)Y = \lambda_K(X) JY - \lambda_J (X) KY - \xi_X IY + I\xi_X Y.
\end{equation}

\begin{proposition}[Cabrera \& Swann \cite{Cabrera-S:aqH-torsion}]
  The minimal $\SP(n)\SP(1)$\bdash connection is given by $\Nt = \nabla +
  \xi$, where $\nabla$ is the Levi-Civita connection and the intrinsic
  $\SP(n)\SP(1)$-torsion $\xi$ is defined by
  \begin{equation*}
    \xi_X Y = - \frac14 \sum_{A=I,J,K}
    A (\nabla_X A) Y + \frac12 \sum_{A=I,J,K} \lambda_A(X) A Y,
  \end{equation*}
  for all vectors $X,Y$, being the one-forms $\lambda_I$, $\lambda_J$ and
  $\lambda_K$ defined by cyclically permuting $I,J,K$ in the expression
  \begin{equation*}
    \lambda_I (X) = \frac1{2n} \inp{\nabla_X \omega_J}{\omega_K}.
  \end{equation*}
\end{proposition}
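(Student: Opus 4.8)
The plan is to take for granted, from the general theory of minimal connections recalled at the start of this section, that the minimal $\SP(n)\SP(1)$-connection $\Nt$ exists and is unique, to write $\Nt=\nabla+\xi$, and to pin down $\xi$ by a short computation. Since $\Nt$ is an $\SP(n)\SP(1)$-connection, about any point there is a local adapted basis $I,J,K$ and one-forms $\lambda_I,\lambda_J,\lambda_K$ for which equation~\eqref{torsion:sp(n)sp(1)} and its two cyclic analogues (with $I,J,K$ and $\lambda_I,\lambda_J,\lambda_K$ cyclically permuted) hold; these express $(\nabla_X A)Y$ in terms of the $\lambda$'s and $\xi$. The only additional input needed is that, for each $X$, $\xi_X\in\lie g^\perp$.

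For that, recall from Section~\ref{sec:preliminaries} that, as $\SP(n)\SP(1)$-modules inside $\so(4n)=\Lambda^2\mathcal V^*$, one has $\so(4n)=\sym 2 E+\sym 2 H+\Lambda^2_0 E\,\sym 2 H$, where $\sP(n)=\sym 2 E$ is the set of skew endomorphisms $b$ with $Ab=b$ for $A=I,J,K$ and $\sP(1)=\sym 2 H$ is spanned by $I,J,K$; hence $\lie g^\perp=\Lambda^2_0 E\,\sym 2 H$ is the set of skew $\beta$ with $\sum_{A=I,J,K}A\beta=-\beta$ and $\inp\beta{\omega_A}=0$ for $A=I,J,K$. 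Rewriting these for endomorphisms -- using that the two-form $A\beta$ corresponds to the endomorphism $-A\beta A$ and that $\inp\beta{\omega_A}=-\tfrac12\Tr(A\beta)$ -- they become
\[
  I\beta I+J\beta J+K\beta K=\beta,\qquad\Tr(A\beta)=0\quad(A=I,J,K).
\]

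Now apply $I$, $J$ and $K$ on the left of equation~\eqref{torsion:sp(n)sp(1)} and of its two cyclic analogues, respectively, and add. The quaternion identities $IJ=K$, $JK=I$, $KI=J$ (so, e.g., $IK=-J$ and $JI=-K$) collapse the right-hand sides: the $\lambda$-terms sum to $2\bigl(\lambda_I(X)I+\lambda_J(X)J+\lambda_K(X)K\bigr)Y$ and the $\xi$-terms to $-\bigl(I\xi_X I+J\xi_X J+K\xi_X K\bigr)Y-3\,\xi_X Y$. Using the first displayed condition, $I\xi_X I+J\xi_X J+K\xi_X K=\xi_X$, this gives
\[
  \sum_{A=I,J,K}A(\nabla_X A)Y=2\sum_{A=I,J,K}\lambda_A(X)\,AY-4\,\xi_X Y,
\]
which is precisely the claimed expression for $\xi_X Y$ after rearranging.

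Finally one identifies the $\lambda_A$. Reading equation~\eqref{torsion:sp(n)sp(1)} as an identity of endomorphisms, $\nabla_X I=\lambda_K(X)J-\lambda_J(X)K-\xi_X I+I\xi_X$; compose with $J$ on the left and take the trace, using $\Tr(J^2)=-4n$, $\Tr(JK)=\Tr(I)=0$, $\Tr(J\xi_X I)=\Tr(K\xi_X)$, $\Tr(JI\xi_X)=-\Tr(K\xi_X)$, and $\Tr(K\xi_X)=-2\inp{\xi_X}{\omega_K}=0$ from the second displayed condition. This yields $\Tr(J\,\nabla_X I)=-4n\,\lambda_K(X)$, and since $\inp{\nabla_X\omega_I}{\omega_J}=-\tfrac12\Tr(J\,\nabla_X I)$ we get $\lambda_K(X)=\tfrac1{2n}\inp{\nabla_X\omega_I}{\omega_J}$, i.e.\ cyclically $\lambda_I(X)=\tfrac1{2n}\inp{\nabla_X\omega_J}{\omega_K}$, as stated. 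The only real labour here is the sign bookkeeping with the quaternion relations in the sum over $A(\nabla_X A)$, and the conversion of the two characterising conditions of $\lie g^\perp$ into the endomorphism/trace identities used above; everything else is formal. Alternatively, and without invoking uniqueness, one can simply verify that the $\xi$ and the $\lambda_A$ of the statement make $\nabla+\xi$ a metric connection with $\xi_X\in\lie g^\perp$ which preserves $\mathcal G$, but the route above is more economical.
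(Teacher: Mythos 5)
The paper does not prove this proposition; it is quoted verbatim from the reference \cite{Cabrera-S:aqH-torsion}, so there is no in-text argument to compare against. Your derivation is nevertheless correct and is essentially the standard one: granting existence and uniqueness of the minimal connection from the general theory recalled at the start of \S\ref{sec:torsion}, applying $A$ to each of the three identities of the form \eqref{torsion:sp(n)sp(1)} and summing, combined with the characterisation $I\xi_XI+J\xi_XJ+K\xi_XK=\xi_X$ and $\Tr(A\xi_X)=0$ of $\lie g^\perp=\Lambda^2_0E\sym 2H$, gives exactly the stated formula for $\xi$, and your trace computation correctly identifies $\lambda_K(X)=\tfrac1{2n}\inp{\nabla_X\omega_I}{\omega_J}$, whose cyclic permutation is the stated expression for $\lambda_I$.
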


The next result describes the decomposition of the space $ T^*M \otimes
\Lambda_0^2 E \sym 2H$ of possible intrinsic torsion tensors into
irreducible $\SP(n)\SP(1)$-modules.

\begin{proposition}[Swann~\cite{Swann:symplectiques}]
  The intrinsic torsion $\xi$ of an almost quaternion\bdash Hermitian
  manifold $M$ of dimension at least $8$, has the property
  \begin{equation*}
    \xi \in T^* M \otimes \Lambda_0^2E \sym 2 H = ( \Lambda_0^3 E +
    K + E) ( \sym 3 H + H).
  \end{equation*}
\end{proposition}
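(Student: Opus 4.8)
The statement packages two independent assertions: that $\xi$ is a section of $T^*M\otimes\Lambda_0^2E\sym 2H$, and the explicit $\SP(n)\SP(1)$-irreducible decomposition of that bundle into six summands. The plan is to prove them in turn. For the first I would invoke the general principle recalled at the start of this section: since $\SP(n)\SP(1)$ is a closed connected subgroup of $\SO(4n)$, the intrinsic torsion satisfies $\xi_x\in T^*_xM\otimes\lie g^\perp$, where $\lie g=\sP(n)+\sP(1)$ and $\lie g^\perp$ is its orthogonal complement in $\so(4n)\cong\Lambda^2\mathcal V^*$. Everything then reduces to identifying $\lie g^\perp$ as an $\SP(n)\SP(1)$-module. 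Complexifying and using $\mathcal V\otimes_{\mathbb R}{\mathbb C}\cong EH$, the standard identity $\Lambda^2(EH)=\Lambda^2E\otimes\sym 2H+\sym 2E\otimes\Lambda^2H$, and the facts $\Lambda^2H\cong{\mathbb C}$, $\Lambda^2E=\Lambda_0^2E+{\mathbb C}\omega_E$, $\sym 2E\cong\sP(n)$ and ${\mathbb C}\omega_E\sym 2H\cong\sP(1)$, one obtains
\[
  \Lambda^2(EH)=\underbrace{\sym 2E+{\mathbb C}\omega_E\sym 2H}_{\lie g\otimes{\mathbb C}}+\Lambda_0^2E\sym 2H,
\]
so $\lie g^\perp\cong[\Lambda_0^2E\sym 2H]$ and hence $\xi$ is a section of $T^*M\otimes\Lambda_0^2E\sym 2H$. (This is where $n>1$ enters: for $n=1$ one has $\Lambda_0^2E=0$, $\SP(1)\SP(1)=\SO(4)$ and $\xi\equiv0$.)

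For the second assertion I would first observe that, through the metric together with the invariant symplectic forms $\omega_E$ and $\omega_H$, one has $T^*M\cong[EH]$ as an $\SP(n)\SP(1)$-module, so the bundle of intrinsic-torsion tensors is $[EH]\otimes[\Lambda_0^2E\sym 2H]$. Over ${\mathbb C}$ this is $(E\otimes\Lambda_0^2E)\otimes(H\otimes\sym 2H)$, and the two tensor products decompose separately. On the $H$-factor, Clebsch--Gordan gives $H\otimes\sym 2H\cong\sym 3H+H$; on the $E$-factor, this is exactly the decomposition $E\otimes\Lambda_0^2E\cong\Lambda_0^3E+K+E$ recalled in \S\ref{sec:preliminaries}, which serves there as the definition of $K=V^{21}$ (and can be re-derived, if one wishes, from the $\SP(n)$-decomposition of $\Lambda^3E$ together with an analysis of the $\omega_E$-contraction $E\otimes\Lambda_0^2E\to E$). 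Multiplying the two decompositions out produces the six summands $(\Lambda_0^3E+K+E)(\sym 3H+H)$. Finally, since in each summand $E^aH^b$ the integer $a+b$ is even, every summand carries a real structure, so the complex decomposition descends verbatim to the claimed real decomposition.

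I do not expect a genuine obstacle here: once the two ingredient decompositions — $\so(4n)$ under $\SP(n)\SP(1)$ and $E\otimes\Lambda_0^2E$ under $\SP(n)$ — are available, and the latter is assumed, the rest is routine representation theory. The only points needing a little care are keeping the identification $\lie g^\perp\cong[\Lambda_0^2E\sym 2H]$ honest over ${\mathbb R}$ (tracking the quaternionic structure maps on $E$ and $H$ so that the summands really are real forms of the complex modules), and recording the low-dimensional degenerations: for $n=2$ one has $\Lambda_0^3E=0$, so only the four summands $(K+E)(\sym 3H+H)$ actually occur, while for $n=1$ the torsion space is trivial, consistent with the standing hypothesis $n>1$.
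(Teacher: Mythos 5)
Your argument is correct and complete: the paper itself offers no proof of this Proposition, citing it from Swann \cite{Swann:symplectiques}, and what you give is precisely the standard derivation consistent with everything recorded in the paper's preliminaries --- the identification $\lie g^\perp\cong\Lambda^2_0E\sym 2H$ inside $\so(4n)\cong\Lambda^2\mathcal V^*=\sym 2E+\Lambda^2E\sym 2H$, the decomposition $E\otimes\Lambda^2_0E\cong\Lambda^3_0E+K+E$ stated in \S\ref{sec:preliminaries}, the Clebsch--Gordan splitting $H\otimes\sym 2H\cong\sym 3H+H$, and the parity argument for real forms. Your low-dimensional remarks also match the paper's own statements that $\Lambda^3_0E=0$ for $n=2$ and that the torsion space is trivial for $n=1$.
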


\noindent If the dimension of $M$ is at least~$12$, all the modules of the
sum are non-zero.   For an eight-dimensional manifold $M$, we have
$\Lambda_0^3 E \sym 3 H = \Lambda_0^3 E H = \{ 0 \}$.  Therefore, for $\dim
M \geqslant 12 $ and $\dim M =8$, we have respectively $2^6 =64$ and $2^4=16$
classes of almost quaternion-Hermitian manifolds.  Explicit conditions
characterising these classes can be found in \cite{Cabrera:aqh}.

We use this Proposition to decompose $\xi$ as 
\begin{equation*}
  \xi = \xi_{33} + \xi_{K3} + \xi_{E3} + \xi_{3H} + \xi_{KH} +
  \xi_{EH},
\end{equation*}
where $\xi_{UF} \in U\otimes F$, for $U=\Lambda^3_0E, K, E$ and $F=\sym3H,
H$.  The components of the intrinsic torsion~$\xi$ have the following
specific symmetry properties and characterisations described
in~\cite{Cabrera-S:aqH-torsion}.
\begin{asparaenum}
  \setdefaultleftmargin{}{6em}{}{}{}{} 
\item $\xi_{33}$ is a tensor characterised by the conditions:
  \begin{enumerate}
  \item $\sum_{A=I,J,K} (\xi_{33})_A A = - \sum_{A=I,J,K} A (\xi_{33})_A =
    - \xi_{33}$,
  \item $\inp\cdot{(\xi_{33})_\cdot\cdot}$ is a skew-symmetric three-form.
  \end{enumerate}
\item $\xi_{K3}$ is a tensor characterised by the conditions:
  \begin{enumerate}
  \item $\sum_{A=I,J,K} (\xi_{K3})_A A = -\sum_{A=I,J,K} A (\xi_{K3})_A = -
    \xi_{K3}$,
  \item $\sumcic_{XYZ} \inp Y{(\xi_{K3})_X Z} =0$.
  \end{enumerate}
\item $\xi_{E3}$ is given by
  \begin{multline*}
    \inp Y{(\xi_{E3})_X Z} \\
    = \tfrac1n\sum_{A=I,J,K} \bigl( nA ( \theta^\xi_A - \theta^\xi ) \wedge
    \omega_A - (n-1) A ( \theta^\xi_A - \theta^\xi) \otimes
    \omega_A\bigr)(X,Y,Z),
  \end{multline*}
  where $\theta^\xi$ is the one-form defined by
  \begin{equation}
    \label{eq:global-theta}
    \tfrac6n (2n+1)(n-1) \theta^\xi (X) = - \inp{\xi_{e_i} e_i}X = -
    \sum_{A=I,J,K} \inp{A\xi_{e_i} A e_i}X,
  \end{equation}
  and $\theta^\xi_I$, $\theta^\xi_J$, $\theta^\xi_K$ are the local
  one-forms given by
  \begin{equation*}
    \tfrac2n (2n+1) (n-1) \theta^\xi_A (X) = - \inp{A \xi_{e_i} A e_i}X.
  \end{equation*}
  Note that $3\theta^\xi = \theta^\xi_I + \theta^\xi_J + \theta^\xi_K$.
\item $\xi_{3H}$ is a tensor characterised by the conditions:
  \begin{enumerate}
  \item $(\xi_{3H})_A A - A (\xi_{3H})_A -A \xi_{3H} A = \xi_{3H}$, for
    $A=I,J,K$,
  \item $\sumcic_{X,Y,Z} \inp Y{(\xi_{3H})_X Z} = 0$.
  \end{enumerate}
\item $\xi_{KH}$ is a tensor characterised by the conditions:
  \begin{enumerate}
  \item $ (\xi_{KH})_A A - A (\xi_{KH})_A - A \xi_{KH} A = \xi_{KH}$, for
    $A=I,J,K$;
  \item there exists a skew-symmetric three-form $\psi^{(K)}$ such that
    \begin{equation*}
      \inp Y{(\xi_{KH})_X Z} =\bigl(3\psi^{(K)} - \sum_{A=I,J,K} A_{(23)}
      \psi^{(K)} \bigr)(X,Y,Z);
    \end{equation*}
  \item $\sum_{i=1}^{4n} (\xi_{KH})_{e_i} e_i =0$.
  \end{enumerate}
\item $\xi_{EH}$ is given by
  \begin{equation*}
    \begin{split}
      \inp Y{(\xi_{EH})_X Z} &= 3 e_i \otimes e_i \wedge \theta^\xi(X,Y,Z)
      \eqbreak - \sum_{A=I,J,K} \bigl( e_i \otimes A e_i \wedge A
      \theta^\xi + \tfrac2{n} A \theta^\xi \otimes \omega_A \bigr) (X,Y,Z),
    \end{split}
  \end{equation*}
  where $\theta^\xi$ is the global one-form defined
  by~\eqref{eq:global-theta}.
\item The part $\xi_{\sym3H} = \xi_{33} + \xi_{K3} + \xi_{E3} $ of $\xi$ in
  $( \Lambda_0^2 E + K + E) \sym3H$ is characterised by the condition
  \begin{equation*}
    \sum_{A=I,J,K} (\xi_{\sym3 H })_A A = - \sum_{A=I,J,K} A
    (\xi_{\sym3H})_A = - \xi_{\sym3H} .  
  \end{equation*}
\item The part $\xi_H = \xi_{3H} + \xi_{KH} + \xi_{EH} $ of $\xi$ in $(
  \Lambda_0^2 E + K + E) H$ is characterised by the condition
  \begin{equation*}
    (\xi_{ H })_A A - A (\xi_H)_A - A (\xi_H) A = \xi_H,
  \end{equation*}
  for $A=I,J,K$.
\end{asparaenum}

\section{Curvature and intrinsic torsion}
\label{sec:curvature}

In order to study the contribution of the intrinsic $\SP(n)\SP(1)$-torsion
to the different components of the Riemannian curvature tensor, we consider
the $\SP(n)\SP(1)$-map $\pi_{1es} \colon \Lambda^2 T^* M \otimes \Lambda^2
T^* M \to \Lambda^2 T^* M \otimes \Lambda^2 E \sym 2 H$ defined by
\begin{equation*}
  4 \pi_{1es} (a) = 3 a - \sum_{A=I,J,K} A_{(3)}A_{(4)} a.
\end{equation*}

Let $\talt \colon T^* M \otimes T^* M \otimes \End T^* M \to \Lambda^2 T^*
M \otimes \End T^* M$ be the skewing map and define $\taltb \colon (T^* M
\otimes\End T^* M) \otimes ( T^* M \otimes\End T^* M) \to \Lambda^2 T^* M
\otimes \End T^* M$ by $\taltb (\xi \otimes \zeta)_{X,Y} Z = \xi_{\zeta_XY}
Z - \xi_{\zeta_YX} Z$ .

\begin{lemma}
  \label{projection:es}
  For the curvature tensor $R \in \FCur$, the intrinsic
  $\SP(n)\SP(1)$-torsion~$\xi$ and $\gamma_I = d \lambda_I + \lambda_J
  \wedge \lambda_K$, we have
  \begin{equation*}
    \begin{split}
      \pi_{1es} (R)&(X,Y,Z,U) \\&= \tfrac12 \sum_{A=I,J,K} \gamma_A \otimes
      \omega_A ( X,Y, Z, U) + \inp{\talt ( \Nt \xi )_{X,Y} Z}U \eqbreak -
      \tfrac34 \inp{\talt (\xi \circ \xi)_{X,Y} Z}U - \tfrac14
      \sum_{A=I,J,K} \inp{A\talt (\xi \circ \xi)_{X,Y} AZ}U \eqbreak +
      \inp{\taltb (\xi \otimes\xi)_{X,Y} Z}U.
    \end{split}
  \end{equation*}
\end{lemma}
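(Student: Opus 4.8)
The plan is to reduce the identity to a pointwise computation relating the $\Lambda^2E\sym2H$-component of the curvature to the intrinsic torsion via the minimal connection. The starting point is the definition $\Nt = \nabla + \xi$, which gives the standard Cartan structure comparison
\begin{equation*}
  \widetilde R(X,Y) = R(X,Y) + (\nabla_X\xi)_Y - (\nabla_Y\xi)_X
  + [\xi_X,\xi_Y] - \xi_{\nabla_XY-\nabla_YX},
\end{equation*}
where $\widetilde R$ is the curvature of $\Nt$. Rewriting $\nabla\xi$ in terms of $\Nt\xi$ (since $\nabla_X\xi = \Nt_X\xi - \xi_X\circ\xi - \xi\circ\xi_X$ on the relevant tensor slots, with the torsion term absorbed), one sees that the combination $R - \widetilde R$ is expressed by a $\talt$ of $\Nt\xi$, a $\talt$ of $\xi\circ\xi$, and the $\taltb$-term coming from $\xi_{\nabla_XY-\nabla_YX}$ versus $\xi_{\Nt_XY-\Nt_YX}$. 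The point of the lemma is then that applying $\pi_{1es}$ annihilates the $\sym2E$-part of $R$ entirely, kills $\widetilde R$ up to its $\Lambda^2E\sym2H$-component, and that component of $\widetilde R$ is exactly the curvature of the induced $\SP(1)$-part — which is $\tfrac12\sum_A\gamma_A\otimes\omega_A$ with $\gamma_A = d\lambda_A+\lambda_B\wedge\lambda_C$, by the structure equations $(\Nt_XI)Y=\lambda_K(X)JY-\lambda_J(X)KY$ recorded in the previous section.

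First I would write out $\widetilde R(X,Y)I$ explicitly from those three structure equations; differentiating $(\Nt I) = \lambda_K\otimes J - \lambda_J\otimes K$ and cyclic, one finds $\widetilde R(X,Y)$ acts on $\mathcal G$ through the $2$-forms $\gamma_I,\gamma_J,\gamma_K$, and hence the $\Lambda^2E\sym2H$-part of $\widetilde R$ (as a $(0,4)$-tensor, after lowering an index with $g$) is $\tfrac12\sum_{A}\gamma_A\otimes\omega_A$ — the factor coming from the normalisation in $\Omega=\sum_A\omega_A\wedge\omega_A$ and the definition of $\pi_{1es}$. Next I would verify that $\pi_{1es}$ applied to any $\sym2E$-valued curvature tensor vanishes: this is immediate from the characterisation in \eqref{twoforms:glnhdecomp}, since $b\in\sym2E^*$ satisfies $A_{(3)}A_{(4)}b = Ab$-type relations forcing $3b - \sum_A A_{(3)}A_{(4)}b = 0$ on that index pair; so $\pi_{1es}(R)$ depends only on $\pi_{1es}(\widetilde R)$ and the $\xi$-terms from the structure comparison.

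Then comes the bookkeeping: substitute the structure comparison for $R-\widetilde R$ into $\pi_{1es}$, push $\pi_{1es}$ (which acts on the last two indices) through, and collect. The $\Nt\xi$ contribution yields $\inp{\talt(\Nt\xi)_{X,Y}Z}U$ because $\xi$ already takes values in $\Lambda_0^2E\sym2H\subset\Lambda^2E\sym2H$, so $\pi_{1es}$ is the identity on it up to the stated normalisation; the quadratic term $[\xi_X,\xi_Y]=\xi\circ\xi$ splits under $\pi_{1es}$ into the $-\tfrac34\talt(\xi\circ\xi)$ piece and the $-\tfrac14\sum_A A(\ldots)A$ correction — exactly the coefficients $3/4$ and $1/4$ that appear in the definition $4\pi_{1es}(a)=3a-\sum_A A_{(3)}A_{(4)}a$; and the connection-difference term $\xi_{\nabla_XY-\nabla_YX} - \xi_{\Nt_XY-\Nt_YX} = -\xi_{\xi_XY-\xi_YX}$ reassembles as $\taltb(\xi\otimes\xi)_{X,Y}Z$ after using the definition of $\taltb$ and the skewing. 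The main obstacle is purely organisational: tracking the index positions correctly when $\pi_{1es}$ (acting on positions $3,4$) is interleaved with $\talt$ (skewing positions $1,2$) and with the inner product that lowers the $\End$-index, and making sure the $A_{(3)}A_{(4)}$ twisting distributes over the product $\xi\circ\xi$ to give precisely $-\tfrac34 - \tfrac14\sum_A A(\cdot)A$ rather than some other split. Once the sign and coefficient conventions from \eqref{torsion:sp(n)sp(1)} and the definition of $\pi_{1es}$ are fixed, the identity falls out by collecting terms.
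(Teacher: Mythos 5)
Your strategy is correct and would deliver the lemma, but it is organised differently from the paper's proof. The paper never introduces the curvature $\widetilde R$ of $\Nt$: for each $A=I,J,K$ it computes $(1-A_{(3)}A_{(4)})R$ directly from the Ricci identity $R(X,Y,AZ,AU)-R(X,Y,Z,U)=\talt(\nabla^2\omega_A)_{X,Y}(Z,AU)$ together with two substitutions of the explicit formula \eqref{nablaomegai} for $\nabla\omega_A$, arriving at the long single-$A$ identity \eqref{dif:riir} (which still carries explicit $\lambda_J$, $\lambda_K$ terms), and then obtains the lemma by summing over $A$ and dividing by four. Your route --- the comparison formula for the curvatures of $\nabla$ and $\Nt=\nabla+\xi$, together with the observations that $\widetilde R_{X,Y}$ takes values in $\lie{sp}(n)+\lie{sp}(1)$ so that $\pi_{1es}$ kills its $\sym 2 E$-part and returns its $\lie{sp}(1)$-part, that $\pi_{1es}$ acts as the identity on the $\Lambda^2_0 E \sym 2 H$-valued terms $\talt(\Nt\xi)$ and $\taltb(\xi\otimes\xi)$, and that it splits the commutator $[\xi_X,\xi_Y]$ with the coefficients $\tfrac34$ and $\tfrac14$ exactly as in the definition of $\pi_{1es}$ --- is more conceptual and reaches the summed identity without the intermediate $\lambda$-bookkeeping. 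Three points to watch. First, the factor $\tfrac12$ in $\tfrac12\sum_A\gamma_A\otimes\omega_A$ comes from $[B,C]=2A$ when passing from the action $\widetilde R_{X,Y}I=\gamma_K(X,Y)J-\gamma_J(X,Y)K$ on $\mathcal G$ to the $\lie{sp}(1)$-component of the endomorphism $\widetilde R_{X,Y}$ itself, not from the normalisation of $\Omega$ as you suggest; the number is right but the reason is not. Second, the signs of all the $\xi$-terms hinge on the curvature convention (the paper follows Besse), so they must be fixed consistently with \eqref{torsion:sp(n)sp(1)} or the three quadratic terms come out with the wrong overall sign. Third, the paper's unsummed identity \eqref{dif:riir} is reused later (to derive \eqref{astriccia} and in the quaternionic K\"ahler theorem), a byproduct your shortcut does not produce.
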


\begin{proof}
  Since $R(X,Y,IZ,IU) - R(X,Y,Z,U) = - ( R_{X,Y} \omega_I ) (Z,IU)$, using
  the so-called Ricci formula \cite[p.~26]{Besse:Einstein}, we have
  \begin{gather} \label{ricci:form} R(X,Y,IZ,IU) - R(X,Y,Z,U) =
    \talt(\nabla^2\omega_I)_{X,Y}(Z,IU),
  \end{gather}
  where in this case $\talt \colon T^* M \otimes T^* M \otimes \Lambda^2
  T^* M \to \Lambda^2 T^* M \otimes \Lambda^2 T^* M$ is also the skewing
  map.  On the other hand, from equation~\eqref{torsion:sp(n)sp(1)}, it
  follows
  \begin{equation}
    \label{nablaomegai}
    \begin{split}
      (\nabla_X \omega_I) (Y,Z)& = \lambda_K(X) \omega_J(Y,Z) -
      \lambda_J(X) \omega_K(Y,Z) \eqbreak - \inp Y{\xi_X IZ} + \inp
      Y{I\xi_X Z}.
    \end{split}
  \end{equation}
  Now, taking $\Nt = \nabla + \xi$ into account and using repeatedly
  equation~\eqref{nablaomegai}, from the right side of
  equation~\eqref{ricci:form} we get
  \begin{equation}
    \label{dif:riir}
    \begin{split}
      (1- &I_{(3)} I_{(4)}) R (X,Y,Z,U) \\&= ( \gamma_J \otimes \omega_J +
      \gamma_K \otimes \omega_K)(X,Y,Z,U) \eqbreak + 2 \inp{\talt
      (\lambda_J \otimes K \xi I)_{X,Y} Z}U - 2 \inp{\talt (\lambda_K
      \otimes I \xi J)_{X,Y} Z}U \eqbreak + \inp{\talt (\lambda_J \otimes
      \xi J)_{X,Y} Z}U + \inp{\talt (\lambda_J \otimes J \xi )_{X,Y} Z}U
      \eqbreak - \inp{\talt (\lambda_K \otimes \xi K)_{X,Y} Z}U -
      \inp{\talt (\lambda_K \otimes K \xi )_{X,Y} Z }U \eqbreak + \inp{
      \talt ( \Nt \xi )_{X,Y} Z}U + \inp{ \talt (\Nt I \xi I)_{X,Y} Z}U
      \eqbreak- \inp{\talt (\xi \circ \xi)_{X,Y} Z}U - \inp{I\talt (\xi
      \circ \xi )_{X,Y} IZ}U \eqbreak + \inp{ \taltb (\xi \otimes
      \xi)_{X,Y} Z}U + \inp{I \taltb (\xi \otimes \xi)_{X,Y} IZ}U.
    \end{split}
  \end{equation}
  From this identity the Lemma follows.
\end{proof}

Another projection that we need to consider is $\pi_{1s} \colon \Lambda^2
T^* M \otimes \Lambda^2 T^* M \to \Lambda^2 T^* M \otimes \sym 2 H$ given
by
\begin{equation*}
  \pi_{1s} (a)(X,Y,Z,U) = \frac1{4n} \sum_{A=I,J,K} \inp{a(X,Y,
  \cdot, \cdot )}{\omega_A} \omega_A(Z,U).
\end{equation*}

\begin{lemma}
  For the curvature tensor $R \in \FCur$, the intrinsic
  $\SP(n)\SP(1)$-torsion $\xi$ and $\gamma_I = d \lambda_I + \lambda_J
  \wedge \lambda_K$, we have
  \begin{gather}
    \begin{split}
      \pi_{1s} (R)(X,Y,Z,U) &= \tfrac12 \sum_{A=I,J,K} \gamma_A \otimes
      \omega_A (X,Y,Z,U) \eqbreak + \tfrac1{4n} \sum_{A=I,J,K} \inp{\xi_X
      e_i}{\xi_Y A e_i} \omega_A(Z,U),
    \end{split}\\
    \label{astriccia}
    \Ric^*_A (X,Y) = - n \gamma_A (X,AY)  - \inp{\xi_X e_i}{\xi_{AY} Ae_i},\\
    \label{qricci:xigam}
    \Ricq (X,Y) = - n \sum_{A=I,J,K} \gamma_A (X,AY) - \sum_{A=I,J,K}
    \inp{\xi_X e_i}{\xi_{AY} Ae_i}.
  \end{gather}
\end{lemma}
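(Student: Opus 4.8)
The strategy mirrors the proof of Lemma~\ref{projection:es}: start from the curvature identity expressing the failure of $R$ to be $\omega_A$-invariant in the last two slots, feed in the formula $\Nt = \nabla + \xi$ together with \eqref{nablaomegai}, and then project. The three displayed formulae are really three faces of the same computation, so I would organise the proof around a single master identity and then extract each line by taking the appropriate $\SP(n)\SP(1)$-equivariant projection or trace.

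First I would compute $\pi_{1s}(R)$. Starting from \eqref{dif:riir} (or rather its analogues for $J$ and $K$ obtained by cyclic permutation), I apply $\pi_{1s}$, which by definition contracts the $(Z,U)$-pair against each $\omega_A$ and symmetrises over $A=I,J,K$. The key simplifications are: (i) the $\talt(\lambda_\bullet \otimes \cdots)$ terms and the $\talt(\Nt I\xi I)$-type terms, after contracting with $\omega_A$, either cancel in pairs or reassemble into the single $\gamma_A \otimes \omega_A$ term — this uses $\gamma_I = d\lambda_I + \lambda_J\wedge\lambda_K$ and the fact that $\Nt\omega_A$ is controlled by the $\lambda$'s alone; (ii) the $\taltb(\xi\otimes\xi)$ and $\talt(\xi\circ\xi)$ pieces contribute $\tfrac1{4n}\inp{\xi_X e_i}{\xi_Y Ae_i}$ after the contraction, since $\inp{\xi_X e_i}{\xi_Y e_i}$-type traces combine. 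One must be careful that the $\tfrac12$ in front of $\sum_A\gamma_A\otimes\omega_A$ emerges correctly from the normalisation $4\pi_{1es}$ versus the $1/(4n)$ in $\pi_{1s}$; tracking these constants is the bookkeeping core of the argument.

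For \eqref{astriccia}, I would not contract against $\omega_A$ but instead recall that $\Ric^*_A(X,Y) = R(X,e_i,AY,Ae_i)$, i.e.\ set $Z = e_i$, $U = Ae_i$ in the curvature identity and sum. Rewriting $R(X,e_i,AY,Ae_i)$ via the $A$-twisted identity \eqref{dif:riir} (applied with the roles of the index pairs suitably chosen) and summing over $i$, the $\gamma_A$ term yields $-n\gamma_A(X,AY)$ — the factor $n$ coming from $\sum_i \omega_A(e_i,Ae_i) = \inp{e_i}{A^2 e_i}$-type counting on a $4n$-dimensional space — while the quadratic torsion terms collapse to $-\inp{\xi_X e_i}{\xi_{AY}Ae_i}$ after using the first Bianchi identity to discard the $\Nt\xi$ contribution (the $\Nt\xi$ term is a genuine derivative and must drop out of $\Ric^*_A$; this is where one invokes that $\Ric^*_A$ is algebraic in $R$ and that the trace of $\talt(\Nt\xi)$ against $e_i\otimes Ae_i$ vanishes by skew-symmetry of $\talt$ combined with the symmetry of the contraction — this is the one genuinely delicate point). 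Finally \eqref{qricci:xigam} is immediate: $\Ricq = \sum_{A=I,J,K}\Ric^*_A$ by definition, so one just sums \eqref{astriccia} over $A$.

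The main obstacle I anticipate is the second step: showing that the $\talt(\Nt\xi)$-term present in the general identity \eqref{dif:riir} does \emph{not} appear in \eqref{astriccia}. Unlike $\pi_{1es}(R)$, which genuinely sees $\Nt\xi$, the local Ricci tensor $\Ric^*_A$ is purely algebraic in $R$, so the derivative term must cancel upon contraction with $e_i\otimes Ae_i$ and summation over $i$. Verifying this requires care with the skewing map $\talt$ and the summation convention — I would argue it either by a direct index computation or, more cleanly, by noting that $e_i\otimes Ae_i - Ae_i \otimes e_i$ is (up to sign) $\omega_A$ viewed in $\Lambda^2$, so contracting $\talt(\Nt\xi)$ against it picks out $\inp{\Nt\xi}{\omega_A}$-type terms that vanish because $\Nt\omega_A$ lies in the span of the $\omega$'s and $\xi$ takes values in $\Lambda^2_0 E\,\sym2H$, orthogonal to $\omega_A$. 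Once that cancellation is secured, the rest is routine expansion of \eqref{dif:riir}.
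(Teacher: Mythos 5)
Your treatment of the first displayed formula is sound and is essentially the paper's argument: substituting $Z=Ke_i$, $U=e_i$ into the $I$-version of \eqref{dif:riir}, so that the left-hand side becomes $2R(X,Y,Ke_i,e_i)=4\inp{R(X,Y,\cdot,\cdot)}{\omega_K}$, and observing that the $\talt(\Nt\xi)$ and $\taltb(\xi\otimes\xi)$ contributions die under this contraction because $(\Nt_X\xi)_Y$ and $\xi_V$ take values in $\Lambda^2_0E\sym2H$, hence are orthogonal to $\omega_K$, one is left with $2R(X,Y,Ke_i,e_i)=4n\,\gamma_K(X,Y)-4\inp{\xi_X\xi_YKe_i}{e_i}$, which is exactly the content of $\pi_{1s}(R)$.

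The gap is in your route to \eqref{astriccia}. You propose to set ``$Z=e_i$, $U=Ae_i$'' so as to reach $\Ric^*_A(X,Y)=R(X,e_i,AY,Ae_i)$ directly from the curvature identity, but \eqref{dif:riir} only computes the \emph{difference} $R(X,Y,Z,U)-R(X,Y,AZ,AU)$, with the covariant derivatives falling on the first two slots; the contraction defining $\Ric^*_A$ pairs the second and fourth slots, and applying the difference identity there would at best express $\Ric^*_A-\Ric$ in terms of torsion, reintroducing the full Ricci tensor --- which does contain $\Nt\xi$ terms, cf.~\eqref{ricci:gamxi} --- rather than yielding the closed form \eqref{astriccia}. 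The missing ingredient is the first-Bianchi-identity consequence $2\Ric^*_A(X,AY)=R(X,Y,Ae_i,e_i)$, which converts the $(3,4)$-contraction you have already computed for $\pi_{1s}$ into the local Ricci tensor; with it, \eqref{astriccia} and then \eqref{qricci:xigam} (by summing over $A$) fall out of the single identity above with no further work. Relatedly, your heuristic that the $\Nt\xi$ term ``must drop out because $\Ric^*_A$ is algebraic in $R$'' proves nothing: $\Ric$ is equally algebraic in $R$ and retains $\Nt\xi$ contributions. Only the orthogonality argument you reach at the very end, namely that $(\Nt_X\xi)_Y$ lies in $\Lambda^2_0E\sym2H$ and so pairs to zero with $\omega_A$, is the correct mechanism, and it should be stated as the reason from the outset.
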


\begin{proof}
  In equation~\eqref{dif:riir}, we consider $Z=Ke_i$ and $U=e_i$.
  Therefore, we obtain
  \begin{equation*}
    4 \inp{R(X,Y, \cdot, \cdot )}{\omega_K} = 2
    R(X,Y,Ke_i,e_i) = 4n  \gamma_K (X,Y) - 4 \inp{\xi_X \xi_Y  K
    e_i}{e_i}.
  \end{equation*}
  Since $2 \Ric_K^* (X,KY) = R(X,Y,Ke_i,e_i)$, the equations of the Lemma
  follow.
\end{proof}

A third projection is the map $\pi_1 \colon \Lambda^2 T^* M \otimes
\Lambda^2 T^* M \to \Lambda^2 T^* M \otimes \Lambda_0^2 E \sym 2 H$ defined
by $\pi_1 = \pi_{1es} - \pi_{1s}$.  For the curvature tensor $R \in \FCur$,
we have
\begin{equation}
  \label{pir:components}
  \begin{split}
    \pi_1 (R)(X,Y,Z,U)= & \inp{\talt ( \Nt \xi )_{X,Y} Z}U - \tfrac34
    \inp{\talt (\xi \circ \xi)_{X,Y} Z}U \smalleqbreak - \tfrac14
    \sum_{A=I,J,K} \inp{A \talt (\xi \circ \xi)_{X,Y} AZ}U + \inp{\taltb
    (\xi \otimes\xi)_{X,Y} Z}U \smalleqbreak - \tfrac1{4n} \sum_{A=I,J,K}
    \inp{\xi_X e_i}{\xi_Y A e_i} \omega_A(Z,U).
  \end{split}
\end{equation}

Let $\QK$ be the subspace of $\FCur$ such that $\QK = \FCur \cap \ker
\pi_1$.  The space $\QK$ can be seen as the space of possible curvature
tensors of a quaternionic K{\"a}hler manifold.  On $\Lambda^2 T^* M \otimes
\Lambda^2 T^* M$, we will consider the extension of the metric
$g=\inp\cdot\cdot$ defined by
\begin{equation}
  \label{scalarproduct:curvatures}
  \inp ab = a(e_{i_1}, e_{i_2}, e_{i_3}, e_{i_4}) b(e_{i_1}, e_{i_2},
  e_{i_3}, e_{i_4}), 
\end{equation}
and write $\QK^\perp$ for the orthogonal complement of $\QK$ in~$\FCur$,
i.e., $\FCur = \QK + \QK^\perp$.  There exists an $\SP(n)\SP(1)$-map
\begin{equation*}
  \pi_2 \colon \Lambda^2 T^* M \otimes \Lambda_0^2 E \sym 2 H \to \QK^\perp
\end{equation*}
such that the restriction of $\pi^\perp = \pi_2 \circ \pi_1$ to $\FCur$ is
the orthogonal projection $\FCur \to \QK^\perp$ and the restriction of
$\pi_2$ to the orthogonal complement of $\pi_1 (\FCur)$ is zero.
Therefore, making use of the $\SP(n)\SP(1)$-map $\pi_2$, we have the
following consequence of equation~\eqref{pir:components}.

\begin{proposition}
  On an almost quaternion-Hermitian manifold, the components of
  $\pi^\perp(R)$ in $\QK^\perp$ are linear functions of the components of
  $\Nt \xi$ and $\xi \otimes \xi$, where $\Nt = \nabla + \xi$ is the
  minimal $\SP(n)\SP(1)$-connection.  \qed
\end{proposition}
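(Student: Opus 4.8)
The plan is to read the assertion straight off equation~\eqref{pir:components} together with the equivariance of the map~$\pi_2$. First I would observe that every term on the right-hand side of~\eqref{pir:components} is $\SP(n)\SP(1)$-equivariant and is a linear function of either $\Nt\xi$ or $\xi\otimes\xi$. Indeed, $\inp{\talt(\Nt\xi)_{X,Y}Z}{U}$ is obtained from $\Nt\xi$ by applying the skewing map~$\talt$, which is linear; and $\xi\circ\xi$ (the composition $Z\mapsto\xi_X\xi_Y Z$), the tensor $\taltb(\xi\otimes\xi)$, and the contraction $\inp{\xi_X e_i}{\xi_Y Ae_i}$ are each obtained from $\xi\otimes\xi\in(T^*M\otimes\End T^*M)^{\otimes2}$ by composing with fixed $\SP(n)\SP(1)$-equivariant linear maps (composition of endomorphisms, the metric pairing, insertion of the $A=I,J,K$), possibly followed again by~$\talt$ or by multiplication with the $\omega_A$. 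Hence $\pi_1(R)$ is a linear function of the pair $(\Nt\xi,\xi\otimes\xi)$.

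Next I would simply apply the linear map~$\pi_2$. Since $\pi^\perp=\pi_2\circ\pi_1$ restricts on $\FCur$ to the orthogonal projection onto~$\QK^\perp$ and $\pi_2$ is linear, $\pi^\perp(R)=\pi_2(\pi_1(R))$ is again a linear function of $(\Nt\xi,\xi\otimes\xi)$. This already gives the Proposition at the level of the whole tensor $\pi^\perp(R)$.

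To obtain the statement about \emph{components}, I would use the $\SP(n)\SP(1)$-decomposition of $\QK^\perp\subset\FCur$ from \S\ref{sec:aqH}, together with the decompositions of $\Nt\xi\in T^*M\otimes\Lambda_0^2E\sym2H$ and of $\xi\otimes\xi$ into irreducible $\SP(n)\SP(1)$-summands. The orthogonal projection $\FCur\to W$ onto each irreducible summand~$W$ of~$\QK^\perp$ is $\SP(n)\SP(1)$-equivariant, so composing it with the linear expression for $\pi^\perp(R)$ found above exhibits each component of $\pi^\perp(R)$ as a linear function of the components of $\Nt\xi$ and $\xi\otimes\xi$; by Schur's Lemma only those summands isomorphic to~$W$ can contribute.

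There is no genuine obstacle: the content is entirely contained in~\eqref{pir:components} and in the equivariance of $\pi_2$ and of the irreducible projections. The only point needing a little care is the bookkeeping to see that $\xi\circ\xi$ and the contraction $\inp{\xi_\cdot e_i}{\xi_\cdot Ae_i}$ really come from $\xi\otimes\xi$ by fixed equivariant operations and not from some further derivative of~$\xi$, which is immediate once they are written as contractions. Pinning down \emph{which} summands of $\xi\otimes\xi$ and $\Nt\xi$ feed into which summand of $\pi^\perp(R)$, and with what coefficients, is precisely the computation recorded in the tables of the following sections.
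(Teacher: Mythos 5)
Your proposal is correct and follows exactly the route the paper intends: the Proposition is stated as an immediate consequence of equation~\eqref{pir:components} (note the cancellation of the $\gamma_A\otimes\omega_A$ terms in forming $\pi_1=\pi_{1es}-\pi_{1s}$, which is why only $\Nt\xi$ and $\xi\otimes\xi$ survive), followed by the linearity and equivariance of $\pi_2$ and the projections onto irreducible summands. The paper gives no further argument, so your write-up simply makes explicit what the authors leave implicit.
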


Since there are components of $R$ in $\QK$ and $\QK^\perp$ which only
depend on the Ricci and the q-Ricci tensors, a detailed study of these
tensors will refine the above result.

\begin{lemma}
  On an almost quaternion-Hermitian manifold, the Ricci and q-Ricci
  curvature tensors satisfy the identities
  \begin{gather}
    \label{ricmenosqric}
    \begin{split}
      3 &\Ric(X,Y) - \Ricq(X,Y) \\
      & = \sum_{A=I,J,K} (- 2 \gamma_A ( X,A Y) + \inp{\xi_X
      e_i}{\xi_{Ae_i} A Y} + \inp{\xi_X A Y}{\xi_{e_i} A e_i} ) \eqbreak +
      4 \inp{( \Nt_X \xi )_{e_i} Y}{e_i} - 4 \inp{( \Nt_{e_i} \xi )_X
      Y}{e_i} - \inp{\xi_X e_i}{\xi_{e_i} Y} \eqbreak - 3 \inp{\xi_X
      Y}{\xi_{e_i} e_i} -4 \inp{\xi_{\xi_{e_i} X} Y}{e_i},
    \end{split}\\
    \label{ricci:gamxi}
    \begin{split}
      3 \Ric & = \sum_{A=I,J,K} (-(n+2) \gamma_A (\cdot,A \cdot) -
      \inp{\xi_\cdot e_i}{\xi_{A \cdot} Ae_i} + \inp{\xi_\cdot e_i
      \xi_{Ae_i} A \cdot} ) \eqbreak + \sum_{A=I,J,K} \inp{\xi_\cdot A
      \cdot}{\xi_{e_i} Ae_i} + 4 \inp{( \Nt_\cdot \xi )_{e_i} \cdot}{e_i} -
      4 \inp{( \Nt_{e_i} \xi )_\cdot \cdot }{e_i} \eqbreak - \inp{\xi_\cdot
      e_i}{\xi_{e_i} \cdot} - 3 \inp{\xi_\cdot \cdot}{\xi_{e_i} e_i} -4
      \inp{\xi_{\xi_{e_i} \cdot} \cdot}{e_i}.
    \end{split}
  \end{gather}
\end{lemma}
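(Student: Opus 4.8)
First I would observe that the two displayed identities are equivalent modulo~\eqref{qricci:xigam}. Writing $3\Ric=(3\Ric-\Ricq)+\Ricq$ and substituting the expression for $\Ricq$ from~\eqref{qricci:xigam} into~\eqref{ricmenosqric} produces~\eqref{ricci:gamxi}: the term $-2\gamma_A(X,AY)$ absorbs $-n\gamma_A(X,AY)$ to become $-(n+2)\gamma_A(\cdot,A\cdot)$, the remaining contribution $-\inp{\xi_\cdot e_i}{\xi_{A\cdot}Ae_i}$ of $\Ricq$ supplies the extra quadratic term, and all other terms carry over verbatim. So it suffices to prove~\eqref{ricmenosqric}.

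The proof of~\eqref{ricmenosqric} will start from equation~\eqref{dif:riir} in the proof of Lemma~\ref{projection:es}, which expands $(1-I_{(3)}I_{(4)})R$ in terms of $\gamma_J$, $\gamma_K$, $\xi$, $\Nt\xi$ and the connection forms $\lambda_A$. Setting $Y=U=e_i$ and summing over $i$, the left side becomes $\sum_i\bigl(R(X,e_i,Z,e_i)-R(X,e_i,IZ,Ie_i)\bigr)=\Ric(X,Z)-\Ric^*_I(X,Z)$, directly from the definitions of $\Ric$ and $\Ric^*_I$. On the right side, skew-adjointness of $I,J,K$ (so that $\sum_i\inp{e_i}{a}\inp{Ae_i}{b}=-\omega_A(a,b)$) turns $(\gamma_J\otimes\omega_J+\gamma_K\otimes\omega_K)(X,e_i,Z,e_i)$ into $-\gamma_J(X,JZ)-\gamma_K(X,KZ)$, while each $\talt$- and $\taltb$-term collapses to a trace in $\xi$, in $\Nt\xi$, or a $\lambda$-linear expression. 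Summing the resulting identity over the three cyclic permutations of $(I,J,K)$, the left side becomes $\sum_A(\Ric-\Ric^*_A)=3\Ric-\Ricq$, and the $\gamma$-contributions add up to $-2\sum_A\gamma_A(X,AZ)$, which is the first group of terms in~\eqref{ricmenosqric}.

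It then remains to identify the cyclic sum of the contracted torsion terms with the rest of the right-hand side of~\eqref{ricmenosqric}. The $\lambda$-linear terms cancel in pairs under the cyclic summation, because of the antisymmetric pattern of~\eqref{dif:riir}: each one, e.g.\ $2\talt(\lambda_J\otimes K\xi I)$, is killed by the oppositely-signed cyclic image of another, e.g.\ $-2\talt(\lambda_K\otimes I\xi J)$. For the quadratic and $\Nt\xi$ parts, the key structural fact is that for all $X,Y$ the skew endomorphisms $\xi_X$ and $(\Nt_Y\xi)_X$ take values in $\lie g^\perp=\Lambda^2_0 E\sym 2 H$, so as two-forms they satisfy $\sum_A A(\cdot)=-(\cdot)$, equivalently $\sum_A A(\cdot)A=(\cdot)$ as endomorphisms; combined with $\sum_i Ae_i\otimes Ae_i=\sum_i e_i\otimes e_i$, skew-adjointness of $I,J,K$, and the metricity of $\Nt$ (which also makes $(\Nt_Y\xi)_X$ skew-adjoint), this is what collapses the $A$-indexed pieces (the conjugates $A\talt(\xi\circ\xi)A$, $A\taltb(\xi\otimes\xi)A$, $\Nt A\xi A$) and produces the integer coefficients $4=3+1$, $-3$ and $-4$ of~\eqref{ricmenosqric}. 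I expect this last step to be the main obstacle: one must carry the composition-type term $\xi\circ\xi$, the nesting-type term $\taltb(\xi\otimes\xi)$ and all their conjugates through the contraction simultaneously and check that the remainder is exactly the remaining $\xi$- and $\Nt\xi$-terms of~\eqref{ricmenosqric}. This is long but entirely mechanical, with no conceptual difficulty. Once~\eqref{ricmenosqric} is established,~\eqref{ricci:gamxi} follows by the reduction of the first paragraph.
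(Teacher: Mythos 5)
Your proposal is correct and follows essentially the same route as the paper: the paper obtains \eqref{ricmenosqric} by setting $Y=U=e_i$ in the identity of Lemma~\ref{projection:es} (which is exactly the cyclic sum of \eqref{dif:riir}, so contracting \eqref{dif:riir} first and then summing over $I,J,K$ is the same computation in a different order), and it likewise derives \eqref{ricci:gamxi} directly from \eqref{qricci:xigam} and \eqref{ricmenosqric}. The mechanical collapse of the torsion terms that you defer is also left implicit in the paper.
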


\begin{proof}
  If we take $Y=U=e_i$ and write $Y$ instead of $Z$ in the equation of
  Lemma \ref{projection:es}, we will obtain equation~\eqref{ricmenosqric}.
  On the other hand, equation~\eqref{ricci:gamxi} is a direct consequence
  of equations \eqref{qricci:xigam} and~\eqref{ricmenosqric}.
\end{proof}

The next Lemma contains an algebraic result that we will need to analyse
the curvature tensor of a quaternionic K{\"a}hler manifold.

\begin{lemma}
  \label{gammas}
  Let $(\mathcal V,I,J,K,\inp\cdot\cdot)$ be a quaternionic vector space of
  dimension $4n$, $n >1$.  If $\gamma_I$, $\gamma_J$ and $\gamma_K$ are
  three two-forms such that
  \begin{equation*}
    \gamma_I \wedge \omega_J = \gamma_J \wedge \omega_I, \quad
    \gamma_J \wedge \omega_K = \gamma_K \wedge \omega_J, \quad
    \gamma_K \wedge \omega_I = \gamma_I \wedge \omega_K,
  \end{equation*}
  then $\gamma_A = c \, \omega_A$, for $A=I,J,K$, where
  \begin{equation*}
    2n \, c = \inp{\gamma_I}{\omega_I} = \inp{\gamma_J}{\omega_J} =
    \inp{\gamma_K}{\omega_K}. 
  \end{equation*}
\end{lemma}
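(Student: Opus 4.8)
Throughout, $E,H$ and $\omega_A$ have the meaning of \S\ref{sec:preliminaries}.  Regard the triple $\gamma=(\gamma_I,\gamma_J,\gamma_K)$ as an element of $\mathcal G^*\otimes\Lambda^2\mathcal V^*$, where $\mathcal G=\langle I,J,K\rangle\cong\sym 2 H$ carries the trivial $\SP(n)$-action.  By bilinearity and skew-symmetry the three displayed equations are equivalent to the statement that the $\Lambda^4\mathcal V^*$-valued bilinear form $(A,B)\mapsto\gamma_A\wedge\omega_B$ on $\mathcal G$ is symmetric, so the solution set $\mathcal S$ is an $\SP(n)\SP(1)$-submodule of $\mathcal G^*\otimes\Lambda^2\mathcal V^*$, and since the exterior product of two-forms is commutative it contains the line spanned by $\omega\colon A\mapsto\omega_A$.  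Over $\SP(n)\SP(1)$ one has $\Lambda^2\mathcal V^*=\sym 2 E+\Lambda_0^2E\,\sym 2 H+\sym 2 H$, with $\sym 2 H=\langle\omega_I,\omega_J,\omega_K\rangle$, $\sym 2 E$ the $(Ab=b)$-part, and $\Lambda_0^2E\,\sym 2 H$ the part with $\sum_A Ab=-b$ and $\inp b{\omega_A}=0$; write $\gamma_A=\sigma_A+\eta_A+\sum_B c_{BA}\omega_B$ accordingly.  The plan is: (i) show $\sigma_A=0$ and $\eta_A=0$ for every $\gamma\in\mathcal S$, so each $\gamma_A$ lies in $\langle\omega_I,\omega_J,\omega_K\rangle$; (ii) finish by elementary linear algebra on the $c_{BA}$.

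For (i) the tool is a single contraction.  Contract $\gamma_I\wedge\omega_J=\gamma_J\wedge\omega_I$ in the last two slots against $e_i\otimes Ie_i$; using $\sum_i b(e_i,Ce_i)=-2\inp b{\omega_C}$ for a two-form $b$ together with the traces $\sum_i\omega_C(e_i,De_i)$, a routine but sign-sensitive computation rewrites this as
\begin{equation*}
  \rho(K)\gamma_I=(2-2n)\gamma_J+\inp{\gamma_I}{\omega_I}\omega_J-\inp{\gamma_J}{\omega_I}\omega_I,
\end{equation*}
where $\rho(C)$ is the derivation of $\Lambda^2\mathcal V^*$ extending $C$, i.e.\ $\rho(C)b(x,y)=-b(Cx,y)-b(x,Cy)$; cyclic permutation of $I,J,K$ gives two further identities of this shape.  (As a check, $\gamma_A=\omega_A$ gives $\rho(K)\omega_I=2\omega_J$, and both sides are then indeed equal.)  Now $\rho(C)$ is $\SP(n)$-equivariant, hence preserves the three summands above and is identically zero on the $\SP(1)$-trivial summand $\sym 2 E$; projecting the displayed identity to $\sym 2 E$ gives $0=(2-2n)\sigma_J$, so $\sigma_J=0$ as $n>1$, and cyclically $\sigma_I=\sigma_K=0$.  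On $\Lambda_0^2E\,\sym 2 H\cong\Lambda_0^2E\otimes\sym 2 H$ the operator $\rho(C)$ acts only on the $\sym 2 H\cong\langle\hat I,\hat J,\hat K\rangle$-factor, by $\hat A\mapsto[\hat C,\hat A]$ (so $\rho(K)\hat I=2\hat J$, etc.); projecting the three cyclic identities to this summand and matching the $\hat I,\hat J,\hat K$-components forces each $\eta_A$ into $\Lambda_0^2E\otimes\hat A$, say $\eta_A=v_A\otimes\hat A$, and yields $2v_I=(2-2n)v_J$ together with its two cyclic companions; multiplying the three gives $v_I=(1-n)^3v_I$, hence $v_I=0$, and likewise $v_J=v_K=0$, because $n>1$.

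For (ii), with $\gamma_A=\sum_B c_{BA}\omega_B$, observe that for $n>1$ the six products $\omega_A\wedge\omega_B$ with $A\leqslant B$ are linearly independent in $\Lambda^4\mathcal V^*$ (a short explicit check, e.g.\ in coordinates on $\mathbb H^n$).  Expanding $\gamma_I\wedge\omega_J=\gamma_J\wedge\omega_I$ and its cyclic partners and using this independence forces all off-diagonal $c_{BA}$ to vanish and all $c_{AA}$ to equal a common scalar $c$, so $\gamma_A=c\,\omega_A$; pairing with $\omega_A$ and using $\inp{\omega_A}{\omega_A}=2n$ gives $2nc=\inp{\gamma_A}{\omega_A}$, as claimed.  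The main obstacle is the contraction in part (i): producing the displayed two-tensor identity with the correct coefficient $2-2n$ and correctly tracking how $\rho(C)$ and the $\SP(1)$-structure act on the summand $\Lambda_0^2E\,\sym 2 H$; once that identity and its cyclic versions are in hand, everything else is bookkeeping.
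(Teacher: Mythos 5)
Your proof is correct, and it diverges from the paper's in an interesting way after a common starting point.  The contraction identity you produce is literally the one the paper derives: its first displayed identity in the proof of Lemma~\ref{gammas} is
\begin{equation*}
  2(n-1)\gamma_J + \inp{\gamma_J}{\omega_I}\omega_I
  = \gamma_I(\cdot,K\cdot) + \gamma_I(K\cdot,\cdot) + \inp{\gamma_I}{\omega_I}\omega_J,
\end{equation*}
which is your $\rho(K)\gamma_I=(2-2n)\gamma_J+\inp{\gamma_I}{\omega_I}\omega_J-\inp{\gamma_J}{\omega_I}\omega_I$ rearranged (I checked the coefficient $2-2n$ independently; it is right).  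From there the paper performs a \emph{second} contraction of each wedge equation (against $e_i\otimes Je_i$ as well as $e_i\otimes Ie_i$), uses the resulting six identities to show each $\gamma_A$ is anti-Hermitian for the other two complex structures, and then chains the identities to land directly on $\gamma_I=J_{(1)}\gamma_K=\tfrac1{2n}\inp{\gamma_I}{\omega_I}\omega_I$.  You instead use only the three cyclic identities and let representation theory do the rest: $\rho(C)$ preserves the $\SP(n)$-isotypic summands of $\Lambda^2\mathcal V^*$ and vanishes on the $\SP(1)$-trivial part $\sym2E$, which kills the $\sigma_A$ at once, while the eigenvalue bookkeeping on $\Lambda^2_0E\sym2H$ (off-diagonal components forced to zero, then $v_I=(1-n)^3v_I$) kills the $\eta_A$; the residual $\sym2H$ part is finished by the linear independence of the six forms $\omega_A\wedge\omega_B$, which is true for $n>1$ but genuinely needs checking since it fails for $n=1$.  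Your route halves the contraction work and makes transparent \emph{why} each irreducible piece must vanish; the paper's is longer but entirely elementary and never needs the independence of the $\omega_A\wedge\omega_B$.  Both arguments use $n>1$ in the right places, and both are complete.
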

\begin{proof}
  If we compute the contractions
  \begin{gather*}
    (\gamma_I \wedge \omega_J) (X,Y,Ie_i,e_i) = (\gamma_J \wedge
    \omega_I)(X,Y,Ie_i,e_i),\\
    (\gamma_I \wedge \omega_J) (X,Y,Je_i,e_i) = (\gamma_J \wedge
    \omega_I)(X,Y,Je_i,e_i),
  \end{gather*}
  we will get
  \begin{gather*}
    \begin{split}
      2 (n-1) \gamma_J(X,Y) &+ \inp{\gamma_J}{\omega_I} \omega_I(X,Y) \\ &=
      \gamma_I(X,KY) + \gamma_I(KX,Y) +\inp{\gamma_I}{\omega_I}
      \omega_J(X,Y),
    \end{split}
    \\
    \begin{split}
      2(n-1) \gamma_I(X,Y) &+ \inp{\gamma_I}{\omega_J} \omega_J(X,Y) \\ &=
      - \gamma_J (X,KY) - \gamma_J (KX,Y) + \inp{\gamma_J}{\omega_J}
      \omega_I(X,Y).
    \end{split}
  \end{gather*}
  As a consequence of these equations, $K\gamma_I = - \gamma_I$ and
  $K\gamma_J = - \gamma_J$, i.e., $\gamma_I$~and $\gamma_J$ are
  anti-Hermitian for~$K$.  Moreover, we get the following identities
  \begin{gather*}
    2(n-1) \gamma_I + \inp{\gamma_I}{\omega_J} \omega_J = 2 K_{(1)}
    \gamma_J + \inp{\gamma_J}{\omega_J}
    \omega_I, \\
    2 (n-1) \gamma_J + \inp{\gamma_J}{\omega_I} \omega_I = - 2 K_{(1)}
    \gamma_I + \inp{\gamma_I}{\omega_I} \omega_J .
  \end{gather*}
  By similar arguments, cyclically permuting $I$, $J$, $K$, we obtain that
  $\gamma_J$~and $\gamma_K$ are anti-Hermitian for~$I$, $\gamma_K$~and
  $\gamma_I$ are anti-Hermitian for~$J$, and
  \begin{gather}
    \label{gam:2}
    2(n-1) \gamma_I = 2 K_{(1)} \gamma_J + \inp{\gamma_J}{\omega_J}
    \omega_I =- 2 J_{(1)} \gamma_K + \inp{\gamma_K}{\omega_K} \omega_I, \\
    2 (n-1) \gamma_J = - 2 K_{(1)} \gamma_I + \inp{\gamma_I}{\omega_I}
    \omega_J = 2 I_{(1)} \gamma_K + \inp{\gamma_K}{\omega_K}\omega_J, \\
    2 (n-1) \gamma_K = - 2 I_{(1)} \gamma_J + \inp{\gamma_J}{\omega_J}
    \omega_K = 2 J_{(1)} \gamma_I+ \inp{\gamma_I}{\omega_I} \omega_K.
    \label{gam:3}
  \end{gather}
  {}From equations \eqref{gam:3}, taking equations \eqref{gam:2} into
  account, we have
  \begin{equation*}
    2 (n-1) J_{(1)} \gamma_K  =   2 K_{(1)} \gamma_J +
    \inp{\gamma_J}{\omega_J} \omega_I   = - 2  \gamma_I+
    \inp{\gamma_I}{\omega_I} \omega_I = 2(n-1) \gamma_I. 
  \end{equation*}
  Therefore
  \begin{equation*}
    \gamma_I =  J_{(1)} \gamma_K = \frac1{2n} \inp{\gamma_I}{\omega_I}
    \omega_I. 
  \end{equation*}
  Since by an analogous argument we also have $ 2n \gamma_K =
  \inp{\gamma_K}{\omega_K} \omega_K$, we find $ 2n \gamma_I = 2 J_{(1)}
  \gamma_K = \inp{\gamma_K}{\omega_K} \omega_I$.  Thus $
  \inp{\gamma_K}{\omega_K} = \inp{\gamma_I}{\omega_I}$.
\end{proof}

Now we give an alternative proof of the already classical result that any
quaternionic K{\"a}hler manifold is Einstein
\cite{Berger:CR,Ishihara:qK,Salamon:Invent}.  In our view, in the proof we
present here, the r\^ole played by the $\SP(n)\SP(1)$-structure is seen in
a more natural way.  Likewise, we also provide alternative proofs for some
known additional information about quaternionic K{\"a}hler
manifolds~\cite{Tricerri-Vanhecke:spectrum,Galicki}.

\begin{theorem}
  A quaternionic K{\"a}hler $4n$-manifold $M$, $n>1$, is Einstein,
  $q$-Einstein and locally $\Ric^*_A$-Einstein for $A=I,J,K$.  Moreover, if
  $R$ is the curvature of $M$, then
  \begin{asparaenum}
  \item\label{item:qK-Ric} $\Ric = (n+2) c\,g$, $\Ric^*_A = n c\,g$ and
    $\Ricq = 3n c\,g$, where $2 n c = \inp{\gamma_I}{\omega_I}=
    \inp{\gamma_J}{\omega_J} = \inp{\gamma_K}{\omega_K}$, and $\gamma_I = d
    \lambda_I + \lambda_J \wedge \lambda_K$;
  \item\label{item:qK-pi} $\pi_{\mathbb R_a + \mathbb R_b} (R) = \tfrac{c}8
    (\pi_2 +2 \pi_1)$, where $\pi_{\mathbb R_a + \mathbb R_b}$ is the
    projection $\FCur \to \mathbb R_a + \mathbb R_b$;
  \item\label{item:qK-R} $R \in \sym 4 E + \mathbb R (\pi_2 +2 \pi_1) =
    \QK$.
  \end{asparaenum}
\end{theorem}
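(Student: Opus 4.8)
The plan is to use the defining feature of a quaternionic K\"ahler manifold: its Riemannian holonomy lies in $\SP(n)\SP(1)$, equivalently the intrinsic torsion vanishes, $\xi\equiv 0$, so the minimal connection $\Nt$ coincides with the Levi-Civita connection $\nabla$. The whole theorem should then come out by feeding $\xi=0$ into the curvature--torsion identities already established, together with the algebraic Lemma~\ref{gammas}.

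First I would set $\xi=0$ in \eqref{astriccia}, \eqref{qricci:xigam} and \eqref{ricci:gamxi}. All the $\xi\otimes\xi$ and $\Nt\xi$ terms drop out, leaving $\Ric^*_A=-n\,\gamma_A(\cdot,A\cdot)$, $\Ricq=-n\sum_A\gamma_A(\cdot,A\cdot)$ and $3\Ric=-(n+2)\sum_A\gamma_A(\cdot,A\cdot)$, so every Ricci-type tensor is governed by the three local two-forms $\gamma_A=d\lambda_A+\lambda_B\wedge\lambda_C$. The crucial point is that these $\gamma_A$ satisfy the hypotheses of Lemma~\ref{gammas}: with $\xi=0$ the structure formula \eqref{torsion:sp(n)sp(1)} reduces to $\nabla\omega_I=\lambda_K\otimes\omega_J-\lambda_J\otimes\omega_K$ and its cyclic versions, whence $d\omega_I=\lambda_K\wedge\omega_J-\lambda_J\wedge\omega_K$; expanding $0=d^2\omega_I$ and grouping terms according to the definition of the $\gamma_A$ gives $\gamma_J\wedge\omega_K=\gamma_K\wedge\omega_J$, and cyclically. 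Lemma~\ref{gammas} then forces $\gamma_A=c\,\omega_A$ for $A=I,J,K$ with $2nc=\inp{\gamma_A}{\omega_A}$; since $\omega_A(\cdot,A\cdot)=-g$, substituting back yields $\Ric^*_A=nc\,g$, $\Ricq=3nc\,g$ and $\Ric=(n+2)c\,g$. This is~\itref{item:qK-Ric} and shows that $M$ is Einstein, $q$-Einstein and locally $\Ric^*_A$-Einstein.

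For~\itref{item:qK-R}, $\xi=0$ makes the right-hand side of \eqref{pir:components} vanish, so $\pi_1(R)=0$ and hence $R\in\QK$. The identification $\QK=\sym 4 E+\mathbb R(\pi_2+2\pi_1)$ I would obtain by checking directly that the projection $\pi_1$ annihilates both $\sym 4 E$ and $\pi_2+2\pi_1$, matched against the dimension $\dim\QK=\dim[\sym 4 E]+1$. For~\itref{item:qK-pi}, write $R=W+t(\pi_2+2\pi_1)$ with $W\in\sym 4 E$ and $t\in\mathbb R$. Since $\Ric$ vanishes on $\sym 4 E$ and, by Propositions~\ref{spn:curv1}\itref{item:Ra} and~\ref{spn:curv2}\itref{item:Rb}, $\Ric(\pi_2+2\pi_1)=\tfrac23\Ric(\pi_2+6\pi_1)+\tfrac13\Ric(\pi_2-6\pi_1)=8(n+2)\,g$, comparison with $\Ric(R)=(n+2)c\,g$ forces $t=c/8$; as $\pi_2+2\pi_1\in\mathbb R_a+\mathbb R_b$ while $W\in\sym 4 E$, this gives $\pi_{\mathbb R_a+\mathbb R_b}(R)=\tfrac c8(\pi_2+2\pi_1)$.

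The step I expect to be the main obstacle is the derivation of the wedge identities $\gamma_A\wedge\omega_B=\gamma_B\wedge\omega_A$: the bookkeeping in $d^2\omega_I=0$, with its cyclic dependence on $\lambda_I,\lambda_J,\lambda_K$ through $d\omega_J$ and $d\omega_K$, has to come out exactly so that Lemma~\ref{gammas} applies. A secondary point requiring care---more conceptual than computational---is to pin $\QK$ down as precisely $\sym 4 E+\mathbb R(\pi_2+2\pi_1)$ rather than some larger submodule of $\FCur$.
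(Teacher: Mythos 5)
Your treatment of parts (i) and (ii) is correct and essentially the paper's own argument.  The identities $\gamma_A\wedge\omega_B=\gamma_B\wedge\omega_A$ do come straight out of $d^2\omega_A=0$ exactly as you describe, Lemma~\ref{gammas} then gives $\gamma_A=c\,\omega_A$, and substituting this and $\xi=0$ into \eqref{astriccia}, \eqref{qricci:xigam} and \eqref{ricci:gamxi} yields part (i).  Your computation $\Ric(\pi_2+2\pi_1)=\tfrac23\,\Ric(\pi_2+6\pi_1)+\tfrac13\,\Ric(\pi_2-6\pi_1)=8(n+2)g$ and the resulting coefficient $c/8$ are correct and agree with the paper, which reaches the same conclusion by projecting onto $\mathbb R_a$ and $\mathbb R_b$ separately via $\tfrac12(\Ric\pm\Ricq)$.

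The genuine weakness is part (iii).  Setting $\xi=0$ in \eqref{pir:components} does give $\pi_1(R)=0$, i.e.\ $R\in\QK=\FCur\cap\ker\pi_1$, but your proposed identification of $\QK$ with $\sym 4E+\mathbb R(\pi_2+2\pi_1)$ rests on the dimension count $\dim\QK=\dim[\sym 4E]+1$, and within this paper that dimension is not available independently: the formula for $\dim\QK$ quoted in the introduction is a consequence of precisely the identification you are trying to prove, so invoking it here is circular (or at best an appeal to the classical Alekseevskii result, which is the content of this theorem).  To close your argument honestly you would have to show that $\pi_1$ is nonzero on each of the dozen or so remaining irreducible summands of $\FCur$, i.e.\ establish Proposition~\ref{prop:ricciqkp}\itref{item:QKp} first --- a substantial computation you have not carried out.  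The paper sidesteps all of this: from \eqref{dif:riir} with $\xi=0$ and $\gamma_A=c\,\omega_A$, the tensor $R_1=R-\tfrac c8(\pi_2+2\pi_1)$ satisfies $R_1(X,Y,AZ,AU)=R_1(X,Y,Z,U)$ for $A=I,J,K$, from which $L(R_1)=6R_1$ and $L_\sigma(R_1)=12R_1$, and Proposition~\ref{glnhsplit2}\itref{item:6-12} then places $R_1$ in $\sym 4E$ directly.  You should adopt that eigenvalue argument: it is self-contained and needs no knowledge of $\dim\QK$.
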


\begin{proof}
  Since the manifold is quaternionic K{\"a}hler, we have
  \begin{eqnarray*}
    d \omega_I &=& \lambda_K \wedge \omega_J - \lambda_J \wedge
    \omega_K, \\
    d \omega_J &=& \lambda_I \wedge \omega_K - \lambda_K \wedge
    \omega_I, \\
    d \omega_K &=& \lambda_J \wedge \omega_I - \lambda_I \wedge
    \omega_J.
  \end{eqnarray*}
  Now, writing $\gamma_I = d \lambda_I + \lambda_J \wedge \lambda_K$, from
  $d^2 \omega_I = d^2 \omega_J = d^2 \omega_K =0$, we obtain
  \begin{equation*}
    \gamma_K \wedge \omega_J = \gamma_J \wedge  \omega_K, \quad
    \gamma_I \wedge \omega_K = \gamma_K \wedge \omega_I, \quad
    \gamma_J \wedge \omega_I = \gamma_I \wedge \omega_J.
  \end{equation*}
  Finally, using Lemma \ref{gammas} and equations \eqref{astriccia},
  \eqref{qricci:xigam} and \eqref{ricci:gamxi}, it follows that $M$ is
  Einstein and we have part~\itref{item:qK-Ric}.

  For part~\itref{item:qK-pi}, taking Propositions \ref{spn:curv1}
  and~\ref{spn:curv2} into account and using part~\itref{item:qK-Ric}, we
  have
  \begin{gather*}
    \Ric ( \pi_{\mathbb R_a} (R)) = \tfrac12 (\Ric(R) + \Ricq(R)) =
    (2n+1) c\,g, \\
    \Ric (\pi_{\mathbb R_b}(R)) = \tfrac12 (\Ric(R) - \Ricq(R)) = - (n-1) c
    \,g.
  \end{gather*}
  Now, taking Proposition~\ref{spn:curv1}\itref{item:Ra} and
  Proposition~\ref{spn:curv2}\itref{item:Rb} into account, we obtain 
  \begin{equation*}
    \pi_{\mathbb R_a} (R) = \frac{c}{12} (\pi_2 + 6 \pi_1), \quad
    \pi_{\mathbb R_b}(R) = \frac{c}{24} (\pi_2 - 6 \pi_1).
  \end{equation*}
  Hence part~\itref{item:qK-pi} follows.

  Finally, writing $R_1 = R - \pi_{\mathbb R_a + \mathbb R_b} (R)$, using
  equation~\eqref{dif:riir}, we have
  \begin{equation*}
    R_1(X,Y,Z,U) - R_1(X,Y,AZ,AU)=0,
  \end{equation*}
  for $A=I,J,K$.  From this identity it is not hard to check $L_\sigma
  (R_1) = 12 R_1$.  Then, by Proposition~\ref{glnhsplit2}, $R_1 \in \sym 4
  E$ and we have part~\itref{item:qK-R}.
\end{proof}

At this point, we can be a little more precise about the space $\QK^\perp$.

\begin{proposition}
  \label{prop:ricciqkp}
  For an almost quaternion-Hermitian manifold, if we denote $\mathbb R_\QK
  = \mathbb R (\pi_2+2\pi_1)$, then
  \begin{asparaenum}
  \item\label{item:QKpR} the orthogonal complement $\mathbb R_{\QK}^\perp$
    of $\mathbb R_\QK$ in $\mathbb R_a + \mathbb R_b$ is given by $\mathbb
    R_{\QK^\perp} = \mathbb R ( (n+2) \pi_2 - 18n \pi_1 )$;
  \item\label{item:QKp} the space $\QK^\perp$ decomposes into irreducible
    $\SP(n)\SP(1)$-modules as
    \begin{equation*}
      \begin{split}
        \QK^\perp & = \mathbb R_{\QK^\perp} + V^{22} + (\Lambda^2_0 E)_a +
        \Lambda^4_0 E + (\Lambda_0^2 E)_b + V^{31} \sym 2 H \eqbreak +
        (\sym 2 E \sym 2 H)_a + V^{211} \sym 2 H + (\sym 2 E \sym 2 H)_b +
        \Lambda^2_0 E \sym 2 H \eqbreak + V^{22} \sym 4 H + \Lambda^2_0 E
        \sym 4 H + \sym 4 H;
      \end{split}
    \end{equation*}
  \item\label{item:RQKp} the component of $R$ in $\mathbb R_\QK$ is
    determined by $\Ric ( \pi_\QK (R) )$ which is given by
    \begin{equation*}
      \Ric ( \pi_{\QK} (R) )  = \tfrac{n+2}{2(5n+1)} (  \pi_{\mathbb R}
      (\Ric) + 3 \pi_{\mathbb R} (\Ricq));
    \end{equation*}
  \item if we denote $\Ric_{\QK^\perp} = \Ric
    (\pi_{\QK^\perp}(R))$, the component of $R$ in $\mathbb
    R_{\QK^\perp}$ is determined by $\pi_{\mathbb R} ( \Ric_{\QK^\perp} )$
    which is given by
    \begin{equation*}
      \pi_{\mathbb R} ( \Ric_{\QK^\perp} ) = \tfrac{9n}{2(5n+1)}
      (\pi_{\mathbb R} (\Ric) - \tfrac{n+2}{3n} \pi_{\mathbb R}
      (\Ricq)).
    \end{equation*}
  \end{asparaenum}
\end{proposition}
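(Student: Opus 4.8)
The plan is to reduce all four parts to two-dimensional linear algebra inside $W:=\mathbb R_a+\mathbb R_b$, the $\SP(n)\SP(1)$-trivial isotypic component of $\FCur$, which by the decomposition at the end of \S\ref{sec:aqH} is spanned by the curvature tensors $\pi_1$ and $\pi_2$, with $\mathbb R_a=\mathbb R(\pi_2+6\pi_1)$ and $\mathbb R_b=\mathbb R(\pi_2-6\pi_1)$ (using $\vartheta(g\otimes g)=4\pi_2$ and $\psi(g\otimes g)=2\pi_1$). Part~\itref{item:QKp} is then pure bookkeeping: by the Theorem, $\QK=\sym 4 E\oplus\mathbb R(\pi_2+2\pi_1)$ with $\mathbb R_\QK=\mathbb R(\pi_2+2\pi_1)\subset W$ and $\Ric$ vanishing on $\sym 4 E$; since $\sym 4 E$ occurs in $\FCur$ with multiplicity one and the trivial module occurs only inside $W$, Schur's lemma forces $\QK^\perp$ to consist of the line $\mathbb R_{\QK^\perp}$ orthogonal to $\mathbb R_\QK$ inside $W$ together with every other summand of the \S\ref{sec:aqH} list except $\sym 4 E$, which is the asserted decomposition.

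For part~\itref{item:QKpR} I would compute the Gram matrix of $(\pi_1,\pi_2)$ for the pairing \eqref{scalarproduct:curvatures}. Since $\pi_1(x,y,z,u)=\inp xz \inp yu -\inp xu \inp yz$, a direct computation gives $\inp{\pi_1}{\pi_1}=8n(4n-1)$ and $\inp R{\pi_1}=2\scal(R)$ for every $R\in\FCur$; combining the latter with the Ricci values of $\pi_2\pm6\pi_1$ from Propositions~\ref{spn:curv1} and~\ref{spn:curv2} gives $\inp{\pi_1}{\pi_2}=144n$. For $\inp{\pi_2}{\pi_2}$ I expand $\pi_2=\sum_A(6\,\omega_A\odot\omega_A-\omega_A\wedge\omega_A)$, use that any $R\in\FCur$ is orthogonal to every four-form (first Bianchi identity) to discard the $\omega_A\wedge\omega_A$ part, and reduce the remaining inner products to traces of products of the skew matrices of $\omega_I,\omega_J,\omega_K$, obtaining $\inp{\pi_2}{\pi_2}=288n(4n-1)$; in particular $\mathbb R_a\perp\mathbb R_b$. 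Solving the $2\times2$ system for the element of $W$ orthogonal to $\pi_2+2\pi_1$ then gives $\mathbb R_{\QK^\perp}=\mathbb R((n+2)\pi_2-18n\pi_1)$.

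Part~\itref{item:RQKp} and the last assertion follow by a short calculation. Since $\Ric$ and $\Ricq$ are $\SP(n)\SP(1)$-equivariant and $\Ric$ vanishes on $\sym 4 E$, the quantities $\pi_{\mathbb R}(\Ric)$, $\pi_{\mathbb R}(\Ricq)$, $\Ric(\pi_\QK(R))$ and $\pi_{\mathbb R}(\Ric(\pi_{\QK^\perp}(R)))$ depend only on $R':=\pi_W(R)$. Writing $R'=\alpha(\pi_2+6\pi_1)+\beta(\pi_2-6\pi_1)$ and reading off $\Ric(R')$ and $\Ricq(R')$ from Propositions~\ref{spn:curv1} and~\ref{spn:curv2}, one solves for $\alpha$ and $\beta$ in terms of $\pi_{\mathbb R}(\Ric)$ and $\pi_{\mathbb R}(\Ricq)$. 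Expanding $R'$ instead in the orthogonal basis $\{\pi_2+2\pi_1,\ (n+2)\pi_2-18n\pi_1\}$ of $W$ picks out its $\mathbb R_\QK$- and $\mathbb R_{\QK^\perp}$-components; applying $\Ric$ and using $\Ric(\pi_2+2\pi_1)=8(n+2)g$, $\Ric((n+2)\pi_2-18n\pi_1)=-36(2n+1)(n-1)g$ — both nonzero for $n>1$, so the components really are determined by their Ricci tensors — together with $\|\pi_2+2\pi_1\|^2=256n(5n+1)$, collects exactly into the two displayed formulae, the factor $5n+1$ coming from this last norm.

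The only genuinely technical step is the evaluation of $\inp{\pi_2}{\pi_2}$ — equivalently, of the inner products among the tensors $\omega_A\odot\omega_B$ and $\omega_A\wedge\omega_B$; everything else is representation-theoretic bookkeeping, an appeal to results already established in \S\S\ref{sec:aqH}--\ref{sec:curvature}, or the inversion of an explicit $2\times2$ matrix.
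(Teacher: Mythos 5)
Your proposal is correct and follows essentially the same route as the paper: the authors likewise reduce everything to the Gram matrix of $\pi_1,\pi_2$ under the pairing \eqref{scalarproduct:curvatures} (obtaining the same values $\inp{\pi_1}{\pi_1}=8n(4n-1)$, $\inp{\pi_1}{\pi_2}=144n$, $\inp{\pi_2}{\pi_2}=288n(4n-1)$), identify $\mathbb R_{\QK^\perp}$ as the orthogonal line to $\pi_2+2\pi_1$ in $\mathbb R_a+\mathbb R_b$, and read off parts \itref{item:RQKp} and the last assertion from the Ricci values of $\pi_2\pm6\pi_1$ in Propositions~\ref{spn:curv1} and~\ref{spn:curv2}. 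Your explicit devices ($\inp R{\pi_1}=2\scal(R)$, orthogonality of $\FCur$ to four-forms, and the norms $256n(5n+1)$, $\Ric(\pi_2+2\pi_1)=8(n+2)g$, $\Ric((n+2)\pi_2-18n\pi_1)=-36(2n+1)(n-1)g$) merely fill in the computations the paper leaves as "straightforward", and all agree with the stated formulae.
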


\begin{proof}
  Part~\itref{item:QKpR} follows by straightforward computations.  In such
  computations we will obtain
  \begin{equation*}
    \inp{\pi_2}{\pi_2} = 36 \inp{\pi_1}{\pi_1} =
    288 n (4n-1), \qquad \inp{\pi_1}{\pi_2} = 144n.
  \end{equation*}
  We recall that the scalar product for these tensors is given by equation
  \eqref{scalarproduct:curvatures}.  Part~\itref{item:QKp} is a direct
  consequence of part~\itref{item:QKpR} and results contained in some
  previous Sections.  For parts \itref{item:QKp} and~\itref{item:RQKp},
  note that $\Ric_{\QK}$ and $\pi_{\mathbb R} ( \Ric_{\QK^\perp} )$ are the
  Ricci curvatures which respectively correspond to the components of the
  curvature in $\mathbb R (\pi_2+2\pi_1)$ and $\mathbb R ( (n+2) \pi_2 -
  18n \pi_1 )$.  Also for the q-Ricci curvatures we have
  \begin{gather*}
    \Ricq ( \pi_\QK (R) ) = \tfrac{3n}{2(5n+1)} ( \pi_{\mathbb
    R} (\Ric) + 3 \pi_{\mathbb R} (\Ricq) ), \\ 
    \pi_{\mathbb R} ( \Ricq_{\QK^\perp} ) = - \tfrac{3n}{2(5n+1)}
    (\pi_{\mathbb R} (\Ric) - \tfrac{n+2}{3n} \pi_{\mathbb R} (\Ricq)),
  \end{gather*}
  where $\Ricq_{\QK^\perp} = \Ricq (\pi_{\QK^\perp}(R))$.
\end{proof}

Now our purpose is to derive some further consequences of the identities
$d^2 \omega_I = d^2 \omega_J = d^2 \omega_K = 0$.  From equation
\eqref{nablaomegai}, we have
\begin{equation*}
  \begin{split}
    d \omega_I (X,Y,Z) & = ( \lambda_K \wedge \omega_J - \lambda_J \wedge
    \omega_K ) (X,Y,Z) \eqbreak + \sumcic_{X,Y,Z} ( \inp Y{I\xi_X Z
   } - \inp Y{\xi_X I Z} ). 
  \end{split}
\end{equation*}
Now, since $d^2 \omega_I =0$ and $\Nt = \nabla + \xi$, it is not hard to
obtain
\begin{equation}
  \label{isquare}
  \begin{split}
    0 &= (\gamma_K \wedge \omega_J - \gamma_J \wedge \omega_K ) (X,Y,Z,U)
    \eqbreak
    +
    \sumcic_{Y,Z,U} ( \inp Z{I (\Nt_X \xi)_Y U} - \inp{Z,
    (\Nt_X \xi )_Y IU} )
    \eqbreak
    - \sumcic_{X,Z,U} ( \inp Z{I (\Nt_Y \xi)_X U} - \inp{Z,
    (\Nt_Y \xi )_X IU} ) \eqbreak + \sumcic_{X,Y,U} ( \inp Y{I(\Nt_Z
    \xi)_X U} - \inp Y{(\Nt_Z \xi )_X I U} )
    \eqbreak
    - \sumcic_{X,Y,Z} ( \inp Y{I (\Nt_U \xi)_X Z} - \inp{Y,
    (\Nt_U \xi )_X IZ} ) \eqbreak - \sumcic_{Y,Z,U} ( \inp Z{(\xi_X
    I\xi)_Y U} - \inp Z{(\xi_X \xi I )_Y U} )
    \eqbreak
    + \sumcic_{X,Z,U} ( \inp Z{(\xi_Y I\xi)_X U} - \inp{Z,
    (\xi_Y \xi I )_X U} ) \eqbreak - \sumcic_{X,Y,U} ( \inp Y{(\xi_Z
    I\xi)_X U} - \inp Y{(\xi_Z \xi I )_X U} )
    \eqbreak
    + \sumcic_{X,Y,Z} ( \inp Y{(\xi_U I\xi)_X Z} - \inp{Y,
    (\xi_U \xi I )_X Z} ).
  \end{split}
\end{equation}
If we respectively replace $X,Y,Z,U$ by $X,JY,Ke_i,e_i$ in last identity,
proceed in an analogous way for $d^2 \omega_J$ and $d^2 \omega_K$, and
finally summing the obtained expressions, we get
\begin{equation}
  \label{zeroxixi}
  \begin{split}
    0 &= - \sum_{A=I,J,K} ( (2n-1) \gamma_A (X,A Y) -
    \inp{\gamma_A}{\omega_A} \inp XY) \eqbreak + \sumcic_{IJK} (
    \inp{\gamma_J}{\omega_K} \omega_I(X,Y) - \gamma_I (J X,K Y ) ) \eqbreak
    + \sum_{A=I,J,K}
    \begin{aligned}[t]
      &\bigl(- 2 \inp{\xi_X e_i}{\xi_{AY} Ae_i } - \inp{\xi_{e_i} X}{A
      \xi_{A Y } e_i} + \inp{\xi_{e_i} X}{\xi_{AY} A e_i} \eqbreak +
      \inp X{\xi_{A Y} \xi_{e_i} Ae_i} + \inp X{\xi_{\xi_{e_i} Ae_i} AY}
      + \inp{\xi_{e_i} X}{\xi_{Ae_i} A Y} \eqbreak + \inp X{\xi_{
      \xi_{e_i} A Y} A e_i} + \inp X{(\Nt_{e_i} \xi )_{Ae_i} A Y} 
      \eqbreak + \inp X{(\Nt_{e_i} \xi)_{AY} Ae_i} -\inp X{(\Nt_{A Y}
      \xi)_{e_i} A e_i} \bigr)
    \end{aligned}
    \eqbreak
    + \sumcic_{IJK}
    \begin{aligned}[t]
      &\bigl( \inp{\xi_{e_i} IX}{K\xi_{J Y} e_i} - \inp{\xi_{e_i}
      IX}{\xi_{JY} Ke_i} - \inp{IX}{\xi_{JY} \xi_{e_i} K e_i} \eqbreak -
      \inp X{I \xi_{\xi_{e_i} Ke_i} JY} +\inp{\xi_{e_i} I X, \xi_{Ke_i} J
      Y} ) + \inp X{I \xi_{ \xi_{e_i} JY} K e_i } \eqbreak - \inp X{I
      (\Nt_{J Y} \xi)_{e_i} Ke_i} +\inp X{I(\Nt_{e_i} \xi )_{J Y} K e_i}
      \eqbreak -\inp{X }{I(\Nt_{e_i} \xi)_{Ke_i} J Y} \bigr).
    \end{aligned}
  \end{split}
\end{equation}

Now by computing the $\Lambda_0^2 E$-components of the bilinear forms
contained in this identity we get
\begin{multline}
  \label{la02e} n \pi_{\Lambda_0^2 E} ( \sum_{A=I,J,K}
  \gamma_A ( \cdot,A \cdot) ) \\= - \sum_{A=I,J,K} \pi_{\Lambda_0^2 E} (
  \inp{\xi_\cdot e_i}{\xi_{A \cdot} A e_i} - \inp{\xi_{e_i}
  \cdot}{\xi_{Ae_i} A \cdot} )
  \\
  + \sum_{A=I,J,K} \pi_{\sym 2 T^*} ( \inp\cdot{\xi_{\xi_{e_i} A e_i }
  A \cdot} +\inp\cdot{(\Nt_{e_i} \xi )_{Ae_i} A \cdot}
  ).
\end{multline}

Note that if we compute the corresponding $\mathbb R$-components, we will
obtain $ \pi_{\mathbb R} ( \sum_{A=I,J,K} \gamma_A ( \cdot,A \cdot) ) =
-1/2n \sum_{A=I,J,K}\inp{\gamma_A}{\omega_A} \inp\cdot\cdot$ as it is
expected.  Finally, computing the $\sym 2 E \sym 2H $-components in
equation \eqref{zeroxixi}, we obtain
\begin{equation}
  \label{s2es2h}
  \begin{split}
    - 2 &(n-1) \pi_{\sym 2 E \sym 2 H} ( \sum_{A=I,J,K} \gamma_A ( \cdot,A
    \cdot) ) \\& = 2 \pi_{\sym 2 E \sym 2 H} ( \sum_{A=I,J,K}
    \inp{\xi_\cdot e_i}{\xi_{A \cdot} A e_i} ) 
    \eqbreak
    - \pi_{\sym 2 T^*} ( \sum_{A=I,J,K} \inp{\xi_{e_i} \cdot}{\xi_{A
    \cdot} A e_i} + \sumcic_{IJK} \inp{\xi_{e_i} I \cdot}{\xi_{ K
    \cdot } J e_i})
    \eqbreak
    + \pi_{\sym 2 T^* } ( \sum_{A=I,J,K} \inp{\xi_{A \cdot} \cdot}{
    \xi_{e_i } A e_i} + \sumcic_{IJK} \inp{\xi_{I\cdot} J \cdot}{
    \xi_{e_i} K e_i})
    \eqbreak
    + \pi_{\sym 2 T^* } ( \sum_{A=I,J,K} \inp{\xi_{e_i} \cdot}{A \xi_{A
    \cdot} e_i} + \sumcic_{IJK} \inp{\xi_{e_i} I \cdot}{J \xi_{ K
    \cdot} e_i})
    \eqbreak
    + \pi_{\sym 2 T^* } ( \sumcic_{IJK} \inp\cdot{I \xi_{\xi_{e_i}
    K\cdot } J e_i} - \sum_{A=I,J,K} \inp\cdot{\xi_{\xi_{e_i}
    A\cdot} A e_i})
    \eqbreak
    + \pi_{\sym 2 T^* } ( \sum_{A=I,J,K} \inp\cdot{(\Nt_{A \cdot}
    \xi)_{e_i} A e_i} - \sumcic_{IJK} \inp\cdot{I (\Nt_{K
    \cdot} \xi)_{e_i} Je_i} )
    \eqbreak
    - \pi_{\sym 2 T^* } ( \sum_{A=I,J,K} \inp\cdot{(\Nt_{e_i} \xi)_{A
    \cdot} A e_i} - \sumcic_{IJK} \inp\cdot{I (\Nt_{e_i}
    \xi)_{K \cdot} Je_i} ).
  \end{split}
\end{equation}

At this point we can give a more detailed description for the components of
the Ricci curvature tensors.  In fact, from equations \eqref{qricci:xigam}
and~\eqref{ricci:gamxi}, using the identity \eqref{la02e}, we get
\begin{gather}
  \allowdisplaybreaks
  \label{qricci:r} \pi_{\mathbb R} ( \Ricq ) = 
  \frac12 \sum_{A=I,J,K} ( \inp{\gamma_A}{\omega_A} -
  \frac1{2n} \inp{\xi_{e_i} e_j}{\xi_{Ae_i} A e_j} ) \inp\cdot\cdot,\\
 \label{ricci:r}
 \begin{split}
   12n \pi_{\mathbb R} ( \Ric ) & = ( - 3 \inp{\xi_{e_i} e_i}{\xi_{e_j}
   e_j} - 5 \inp{\xi_{e_i} e_j}{\xi_{e_j} e_i} + 8 \inp{( \Nt_{e_i} \xi
   )_{e_j} e_i}{e_j} \eqbreak + \sum_{A=I,J,K} ( 2(n+2)
   \inp{\gamma_A}{\omega_A} +\inp{\xi_{e_i} A e_i}{\xi_{e_j} A e_j}
   \eqbreak + \inp{\xi_{e_i} e_j}{\xi_{Ae_j}Ae_i} - \inp{\xi_{e_i}
   e_j}{\xi_{A e_i} A e_j} ) ) \inp\cdot{\cdot }.
 \end{split}
\end{gather}

Taking traces gives:

\begin{proposition}
  The scalar curvature and q-scalar curvature are 
  \begin{equation}
    \label{eq:scal}
    \begin{split}
      \scal &= \tfrac{2(n+2)}3 \sum_A \inp{\gamma_A}{\omega_A} +
      \tfrac73\norm{\xi_{33}}^2 - \tfrac13\norm{\xi_{K3}}^2 +
      \tfrac{2n^2+3n+2}{3n}\norm{\xi_{E3}}^2 \eqbreak
      - \tfrac13\norm{\xi_{3H}}^2 -
      \tfrac73\norm{\xi_{KH}}^2 + \tfrac{2(4n^2+6n+1)}{3n}\norm{\xi_{EH}}^2
      \eqbreak
      - \tfrac{16(2n+1)(n+1)}n\, d^*\theta^\xi,
    \end{split}
  \end{equation}
  and
  \begin{equation}
    \label{eq:scal-q}
    \begin{split}
      \scalq &= 2n \sum_A \inp{\gamma_A}{\omega_A} + \norm{\xi_{33}}^2 +
      \norm{\xi_{K3}}^2 + \norm{\xi_{E3}}^2 \eqbreak
      - 2 \norm{\xi_{3H}}^2 - 9
      \norm{\xi_{KH}}^2 - \tfrac23\norm{\xi_{EH}}^2.
    \end{split}
  \end{equation}
  \qed
\end{proposition}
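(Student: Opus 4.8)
The plan is to obtain both identities simply by taking the $g$-trace of the formulas \eqref{ricci:r} and~\eqref{qricci:r} for the scalar parts $\pi_{\mathbb R}(\Ric)$ and~$\pi_{\mathbb R}(\Ricq)$. Among the $\SP(n)\SP(1)$-summands of $\sym2\mathcal V^*+\Lambda^2_0E\sym2H$ in which $\Ric$ and~$\Ricq$ take their values, only the $\mathbb Rg$-part has non-zero trace (the $\Lambda^2_0E$- and $\sym2E\sym2H$-parts are trace-free, and $\Lambda^2_0E\sym2H$ is skew), so $\scal=\Tr\pi_{\mathbb R}(\Ric)$ and $\scalq=\Tr\pi_{\mathbb R}(\Ricq)$; since $\Tr g=4n$, each of these equals $4n$ times the scalar coefficient occurring in~\eqref{ricci:r}, resp.~\eqref{qricci:r}. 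For~$\scalq$ this already produces an expression of the shape $2n\sum_A\inp{\gamma_A}{\omega_A}-\sum_A\inp{\xi_{e_i}e_j}{\xi_{Ae_i}Ae_j}$; for~$\scal$ it produces $\tfrac13$ times the bracket in~\eqref{ricci:r}, still carrying the algebraic contractions $\inp{\xi_{e_i}e_i}{\xi_{e_j}e_j}$, $\inp{\xi_{e_i}e_j}{\xi_{e_j}e_i}$, $\sum_A\inp{\xi_{e_i}Ae_i}{\xi_{e_j}Ae_j}$, $\sum_A\inp{\xi_{e_i}e_j}{\xi_{Ae_j}Ae_i}$, $\sum_A\inp{\xi_{e_i}e_j}{\xi_{Ae_i}Ae_j}$ and the one first-order contraction $\inp{(\Nt_{e_i}\xi)_{e_j}e_i}{e_j}$. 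The remaining work is to rewrite these in the closed form of~\eqref{eq:scal} and~\eqref{eq:scal-q}.

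First I would dispose of the algebraic contractions. Each is an $\SP(n)\SP(1)$-invariant quadratic functional of the intrinsic torsion $\xi\in T^*M\otimes\Lambda_0^2E\sym2H$. Since this space is a direct sum of six pairwise inequivalent irreducible $\SP(n)\SP(1)$-modules (in the decomposition for $\dim M\geqslant12$; in lower dimension some $\xi_{UF}$ vanish identically and one just drops those terms), Schur's lemma forces every such functional to be a linear combination $\sum_{U,F}c_{UF}\norm{\xi_{UF}}^2$, with $\xi=\xi_{33}+\xi_{K3}+\xi_{E3}+\xi_{3H}+\xi_{KH}+\xi_{EH}$ a $g$-orthogonal decomposition. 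The six coefficients for each functional are then pinned down by inserting the characterisations of the components recalled in~\S\ref{sec:torsion}: for $\inp{\xi_{e_i}e_i}{\xi_{e_j}e_j}$ and $\inp{\xi_{e_i}e_j}{\xi_{e_j}e_i}$ one needs only that $\inp{\cdot}{(\xi_{33})_\cdot\cdot}$ is a three-form, that $\xi_{K3}$, $\xi_{3H}$, $\xi_{KH}$ have vanishing cyclic sum, and the explicit $\theta^\xi$-formulas for $\xi_{E3}$ and~$\xi_{EH}$; for the contractions involving $A=I,J,K$ one uses in addition the relations $\sum_A(\xi_{UF})_AA=-\xi_{UF}$ for $F=\sym3H$ and $(\xi_{UF})_AA-A(\xi_{UF})_A-A\xi_{UF}A=\xi_{UF}$ for $F=H$, together with the elementary frame identities $\sum_AA^2=-3$ and $\sum_i\inp{Ae_i}{X}e_i=-AX$. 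Expanding, and invoking orthogonality of the six pieces, collapses each functional to its six diagonal terms.

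Next I would deal with the first-order contraction. Because $\Nt$ is metric, $(\Nt_{e_i}\xi)_{e_j}$ is a skew-symmetric endomorphism, so $\inp{(\Nt_{e_i}\xi)_{e_j}e_i}{e_j}=-\inp{(\Nt_{e_i}\xi)_{e_j}e_j}{e_i}$. Passing to a Levi-Civita normal frame at the point and using $\Nt=\nabla+\xi$, one finds $\sum_j(\Nt_{e_i}\xi)_{e_j}e_j=\nabla_{e_i}v+(\text{quadratic in }\xi)$, where $v:=\sum_j\xi_{e_j}e_j$; hence $\inp{(\Nt_{e_i}\xi)_{e_j}e_i}{e_j}=-\mathrm{div}_\nabla v+(\text{quadratic in }\xi)$. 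By~\eqref{eq:global-theta}, $v$ is a fixed multiple of~$(\theta^\xi)^\sharp$, so $\mathrm{div}_\nabla v$ is the corresponding multiple of $-d^*\theta^\xi$, and the first-order contraction becomes a constant multiple of $d^*\theta^\xi$ plus further $\xi\otimes\xi$ contractions, which are re-expanded in norms exactly as above. Substituting everything back into the traces of~\eqref{ricci:r} and~\eqref{qricci:r} and collecting the constants gives~\eqref{eq:scal} and~\eqref{eq:scal-q}; the operator $d^*$ here is the codifferential, so both are pointwise identities.

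The hardest part will be the bookkeeping in the second step: computing correctly the six Schur eigenvalues of the quaternion-twisted functionals $\sum_A\inp{\xi_{e_i}Ae_i}{\xi_{e_j}Ae_j}$, $\sum_A\inp{\xi_{e_i}e_j}{\xi_{Ae_j}Ae_i}$ and $\sum_A\inp{\xi_{e_i}e_j}{\xi_{Ae_i}Ae_j}$, because the $\SP(1)$-triple interacts non-trivially with the $\sym3H$ versus~$H$ split of each of the $\Lambda_0^3E$-, $K$- and $E$-components of~$\xi$, and the signs in $(\xi_{UF})_AA-A(\xi_{UF})_A-A\xi_{UF}A=\xi_{UF}$ must be tracked with care; a slip anywhere there changes the final coefficients. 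A secondary concern is to propagate the constants from~\eqref{eq:global-theta} accurately so as to land on the exact coefficient of $d^*\theta^\xi$, and to verify that the quadratic remainders produced in the third step combine cleanly with the algebraic terms, leaving no residue.
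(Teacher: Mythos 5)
Your proposal matches the paper's (essentially unwritten) proof: the proposition is obtained precisely by taking the traces of the scalar-part formulas \eqref{qricci:r} and \eqref{ricci:r} and re-expressing the resulting $\SP(n)\SP(1)$-invariant contractions of $\xi\otimes\xi$ and $\Nt\xi$ as a diagonal combination of the $\norm{\xi_{UF}}^2$ plus a multiple of $d^*\theta^\xi$. Your Schur-lemma argument for diagonality (the six torsion modules being pairwise inequivalent) and your divergence identity reducing $\inp{(\Nt_{e_i}\xi)_{e_j}e_i}{e_j}$ to $d^*\theta^\xi$ plus quadratic terms are exactly the computations the authors leave implicit behind the phrase ``Taking traces gives''.
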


Computing the $\Lambda^2_0E$-components gives
\begin{gather}
  \begin{split}
   \label{qricci:l02e}
   \pi_{\Lambda_0^2 E} ( \Ricq ) & = - \sum_{A=I,J,K} \pi_{\sym 2 T^*} (
   \inp\cdot{\xi_{\xi_{e_i} A e_i } A \cdot} + \inp{\cdot}{
   (\Nt_{e_i} \xi )_{Ae_i} A \cdot} ) \eqbreak - \sum_{A=I,J,K}
   \pi_{\Lambda_0^2 E} ( \inp{\xi_{e_i} \cdot}{\xi_{Ae_i} A \cdot
  } ),
 \end{split}\\
 \label{ricci:l02e}
 \begin{split}
   3 \pi_{\Lambda_0^2 E} ( \Ric ) &= - \pi_{\Lambda_0^2 E} (
   \inp{\xi_\cdot e_i}{\xi_{e_i} \cdot} + 3 \inp{\xi_\cdot \cdot}{
   \xi_{e_i} e_i} + 4 \inp{\xi_{\xi_{e_i} \cdot} \cdot}{e_i } )
   \smalleqbreak - \frac{n+2}n \sum_{A=I,J,K} \pi_{\sym 2 T^*} (
   \inp{\cdot}{\xi_{\xi_{e_i} A e_i } A \cdot} + \inp\cdot{(\Nt_{e_i} \xi
   )_{Ae_i} A \cdot} ) \smalleqbreak + \sum_{A=I,J,K} \pi_{\Lambda_0^2 E} (
   \inp{\xi_\cdot A \cdot}{\xi_{e_i} A e_i} + \inp{\xi_\cdot
   e_i}{\xi_{Ae_i} A \cdot } ) \smalleqbreak + \sum_{A=I,J,K}
   \pi_{\Lambda_0^2 E} (\frac 2{n} \inp{\xi_\cdot e_i}{\xi_{A \cdot} A
   e_i} - \frac{n+2}n \inp{\xi_{e_i} \cdot}{\xi_{Ae_i} A \cdot} )
   \smalleqbreak + 4 \pi_{\Lambda_0^2 E} ( \inp{( \Nt_\cdot \xi )_{e_i}
   \cdot}{e_i } - \inp{( \Nt_{e_i} \xi )_\cdot \cdot}{e_i} ).
 \end{split}
\end{gather}

Explicit expressions for the $\sym 2 E \sym 2H$-components $\pi_{\sym
2E\sym 2H} ( \Ricq )$ of $\Ricq$ and $ \pi_{\sym 2E\sym 2H} ( \Ric )$
of~$\Ric$ can be easily obtained from equations \eqref{qricci:xigam},
\eqref{ricci:gamxi} and~\eqref{s2es2h}.  Because of their sizes, we will
not write such expressions, but it is clear that such expressions depend
linearly on $\xi \otimes \xi$ and~$\Nt \xi$.

It remains to analyse the $\Lambda_0^2 E \sym 2 H$-component of~$\Ricq$.
For this purpose, replace $Z$ and $U$ by $Ie_i$ and~$e_i$ in
equation~\eqref{isquare}, perform analogous operations for $J$ and~$K$ and
then add the expressions obtained to get
\begin{equation*}
  \begin{split}
    0 &= 2 \sum_{A=I,J,K} (\gamma_A(X,AY) + \gamma_A (AX,Y)) \smalleqbreak
    + \sumcic_{IJK} ( \inp{\gamma_J}{\omega_K} - \inp{\gamma_K}{\omega_J})
    \omega_I (X,Y) 
    + \inp{\xi_X e_i}{\xi_{e_i} Y} - \inp{\xi_Y e_i}{\xi_{e_i} X} \smalleqbreak
    + 3 \inp{\xi_X Y}{\xi_{e_i} e_i} - 3 \inp{\xi_Y X}{\xi_{e_i} e_i}
    + 4 \inp X{\xi_{ \xi_{e_i} Y} e_i} - 4 \inp Y{ \xi_{\xi_{e_i} X} e_i}
    \smalleqbreak
    + \sum_{A=I,J,K}
    \begin{aligned}[t]
      \bigl( &- \inp{\xi_X e_i}{\xi_{Ae_i} AY} + \inp{\xi_Y e_i}{\xi_{Ae_i} A
      X} +  \inp X{\xi_{ \xi_{e_i} Ae_i} A Y} \smalleqbreak
      - \inp Y{\xi_{ \xi_{e_i} Ae_i} A X} - \inp{\xi_X AY}{\xi_{e_i} A e_i} +
      \inp{\xi_Y AX}{\xi_{e_i} Ae_i} \smalleqbreak
      + \inp{\xi_{e_i} X}{\xi_{Ae_i} A Y} - \inp{\xi_{e_i} Y}{\xi_{Ae_i} A
      X} + \inp X{(\Nt_{e_i} \xi )_{Ae_i} A Y} \smalleqbreak
      - \inp Y{(\Nt_{e_i} \xi )_{Ae_i} A X}
      \bigr)
    \end{aligned}
    \smalleqbreak
    - 4 \inp X{(\Nt_Y \xi)_{e_i} e_i} + 4 \inp Y{(\Nt_X \xi)_{e_i} e_i} + 4
    \inp X{(\Nt_{e_i} \xi)_Y e_i} - 4 \inp Y{(\Nt_{e_i} \xi )_X e_i}.
  \end{split}
\end{equation*}
{}From this last identity it is straightforward to derive the
$\Lambda_0^2 E \sym 2 H$-component of $\sum_{A=I,J,K} \gamma_A( \cdot,A
\cdot)$ which is given by
\begin{equation*}
  \begin{split}
    2 &\pi_{\Lambda_0^2 E \sym 2 H} ( \sum_{A=I,J,K} \gamma_A( X,A Y
    ) ) \\&= - \pi_{\Lambda_0^2 E \sym 2 H} ( \inp{\xi_X e_i}{\xi_{e_i}
    Y} + 3 \inp{\xi_X Y}{\xi_{e_i} e_i} )
    \eqbreak
    - 4 \pi_{\Lambda_0^2 E \sym 2 H} ( \inp X{\xi_{ \xi_{e_i} Y}
    e_i} +\inp X{(\Nt_{e_i} \xi)_Y e_i} -
    \inp X{(\Nt_Y \xi)_{e_i} e_i} )
    \eqbreak
    + \sum_{A=I,J,K} \pi_{\Lambda_0^2 E \sym 2 H} ( \inp{\xi_X e_i}{
    \xi_{Ae_i} A Y} + \inp{\xi_X A Y}{\xi_{e_i} A e_i } -
    \inp{\xi_{e_i} X}{\xi_{Ae_i} A Y} )
    \eqbreak
    - \sum_{A=I,J,K} \pi_{\Lambda^2 T^*} ( \inp X{\xi_{ \xi_{e_i} A
    e_i} A Y} + \inp X{(\Nt_{e_i} \xi )_{Ae_i} A Y } ).
  \end{split}
\end{equation*}
Therefore, using this identity and equation \eqref{qricci:xigam}, we deduce
the $\Lambda_0^2 E \sym 2 H$-component of the q-Ricci tensor which is
given by
\begin{equation}
  \label{qricci:l02es2h}
  \begin{split}
    \tfrac2{n} &\pi_{\Lambda_0^2 E \sym 2 H} ( \Ricq )(X,Y)\\ &= \pi_{\Lambda_0^2 E
    \sym 2 H} ( \inp{\xi_X e_i}{\xi_{e_i} Y} + 3
    \inp{\xi_X Y}{\xi_{e_i} e_i} )
    \smalleqbreak
    + 4 \pi_{\Lambda_0^2 E \sym 2 H} ( \inp X{\xi_{ \xi_{e_i}
    Y} e_i} +\inp X{(\Nt_{e_i} \xi)_Y e_i} -
    \inp X{(\Nt_Y \xi)_{e_i} e_i} )
    \smalleqbreak
    - \sum_{A=I,J,K} \pi_{\Lambda_0^2 E \sym 2 H} ( \inp{\xi_X
    e_i}{\xi_{Ae_i} A Y} + \inp{\xi_X A Y}{
    \xi_{e_i} A e_i} - \inp{\xi_{e_i} X}{\xi_{Ae_i} A Y
   } )
    \smalleqbreak
    + \sum_{A=I,J,K} \pi_{\Lambda^2 T^*} ( \inp X{\xi_{ \xi_{e_i} A
    e_i} A Y} + \inp X{(\Nt_{e_i} \xi )_{Ae_i} A Y
   } )
    \smalleqbreak
    - \tfrac2{n} \sum_{A=I,J,K} \pi_{\Lambda_0^2 E \sym 2 H}( \inp{\xi_X
    e_i}{\xi_{A Y} Ae_i} ). 
  \end{split}
\end{equation}

Equations \eqref{qricci:r}--\eqref{qricci:l02es2h} and the above
description of the $\sym 2 E \sym 2H$\bdash components of $\Ricq$ and
$\Ric$ give rise to the following result.  Here we will follow the notation
used in~\S\ref{sec:torsion} writing the components of the intrinsic torsion
$\xi$ as $\xi_{UF}$, for $U=3,K,E$ and $F=3,H$.

\begin{theorem}
  \label{thm:Ric}
  Let $M$ be an almost quaternion-Hermitian $4n$-manifold, $n> 1$, with
  minimal $\SP(n)\SP(1)$-connection $\Nt = \nabla + \xi$.  The tensors
  $\sum_A\inp{\gamma_A}{\omega_A}$, $\Nt\xi_{UF}$ and $\xi_{UF} \odot
  \xi_{VG}$ contribute to the components of the q-Ricci curvature $\Ricq$
  via equation~\eqref{qricci:xigam} and to the Ricci curvature $\Ric$ via
  equation~\eqref{ricci:gamxi} if and only if there is a tick in the
  corresponding place in Table~\ref{tab:Ric}.  \qed
\end{theorem}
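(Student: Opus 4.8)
The plan is to read Table~\ref{tab:Ric} directly off the formulae already assembled above. The starting point is the pair of master identities \eqref{qricci:xigam} for $\Ricq$ and \eqref{ricci:gamxi} for $3\Ric$, refined into their irreducible pieces by the component equations \eqref{qricci:r} and \eqref{ricci:r} (the $\mathbb R g$-parts), \eqref{qricci:l02e} and \eqref{ricci:l02e} (the $\Lambda^2_0 E$-parts), \eqref{qricci:l02es2h} (the $\Lambda^2_0 E \sym 2 H$-part of $\Ricq$), and the analogous, unwritten $\sym 2 E \sym 2 H$-parts extracted from \eqref{qricci:xigam}, \eqref{ricci:gamxi} and \eqref{s2es2h}. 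First I would substitute the decomposition $\xi=\xi_{33}+\xi_{K3}+\xi_{E3}+\xi_{3H}+\xi_{KH}+\xi_{EH}$ into each of these and expand bilinearly, so that every irreducible component of $\Ricq$ and of $\Ric$ becomes a finite sum of $\SP(n)\SP(1)$-equivariant images of the elementary inputs $\sum_A\inp{\gamma_A}{\omega_A}$, $\Nt\xi_{UF}$ and $\xi_{UF}\odot\xi_{VG}$, where $U,V\in\{\Lambda^3_0E,K,E\}$ and $F,G\in\{\sym3H,H\}$.

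For the ``only if'' direction I would show that whenever Table~\ref{tab:Ric} has no tick the corresponding equivariant map vanishes identically. Since each $\xi_{UF}$ lies in the irreducible module $U\otimes F$, and each Ricci component lies in one of the irreducibles $\mathbb R g$, $\Lambda^2_0 E$, $\sym 2 E \sym 2 H$, $\Lambda^2_0 E \sym 2 H$, Schur's Lemma reduces this to asking whether the target irreducible occurs in the $\SP(n)\SP(1)$-decomposition of $EH\otimes U\otimes F$ (for a term linear in $\Nt\xi_{UF}$) or of $(U\otimes F)\otimes(V\otimes G)\cong(U\otimes V)\otimes(F\otimes G)$, respectively of its symmetric square when the two factors coincide (for a quadratic term). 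The $\SP(1)$-factor disposes of most cases at once: the outputs $\mathbb R g$ and $\Lambda^2_0 E$ carry the trivial $\SP(1)$-action, so they can only be reached when $F\otimes G$, respectively $H\otimes F$, contains the trivial module; as $\sym3H\otimes H=\sym4H+\sym2H$ does not, this already deletes every mixed term $\xi_{(\cdot)3}\odot\xi_{(\cdot)H}$ and every $\Nt\xi_{(\cdot)3}$ from the $\mathbb R g$- and $\Lambda^2_0 E$-entries. On the $\SP(n)$-side one uses the plethysms of $E$, $K=V^{21}$ and $\Lambda^3_0E$ recorded in \S\ref{sec:preliminaries}; in the handful of borderline cases the relevant contraction is seen to vanish directly from the symmetry characterisations of the $\xi_{UF}$ in \S\ref{sec:torsion} (the trace conditions $\sum_i(\xi_{UF})_{e_i}e_i=0$, the three-form conditions, and the relations $\sum_A(\xi_{UF})_A A=\pm\xi_{UF}$ or $(\xi_{UF})_A A-A(\xi_{UF})_A-A\xi_{UF}A=\xi_{UF}$).

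For the ``if'' direction I would show that each ticked map is genuinely non-zero. Because source and target are both irreducible, it suffices to evaluate the map on one convenient tangent-space configuration and observe a non-zero value; the map is then a non-zero multiple of the isotypic projection. Concretely I would feed into the formulae the model curvature tensors produced by $\phi,\varphi,\psi,\vartheta$ from the metric $g$, the forms $\omega_A$ and elementary two-forms, as in the proofs of Lemma~\ref{ricci:LL} and Propositions~\ref{spn:curv1}--\ref{spn:curv4}; for the $\Nt\xi_{UF}$-entries I would use that the $1$-jet of $\xi$ is free at a point, so that the terms linear in $\Nt\xi$ can be isolated from the quadratic terms in~$\xi$.

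The step I expect to be the main obstacle is this last one: ruling out accidental cancellation among the several distinct terms that feed a single entry of the table. Unlike the ``only if'' direction, which is purely a question of which irreducibles occur, here one must check term by term---on a well-chosen highest-weight-type configuration for each $\xi_{UF}$---that the numerical coefficients generated by the specific contraction patterns in \eqref{qricci:xigam}--\eqref{qricci:l02es2h} do not sum to zero. This is routine but lengthy; once it is carried out, Theorem~\ref{thm:Ric} follows by collecting the surviving contributions.
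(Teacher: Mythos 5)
Your proposal is correct and follows essentially the same route as the paper: the theorem is stated with its proof implicit in the preceding component formulae \eqref{qricci:r}--\eqref{qricci:l02es2h} together with the $\sym 2E\sym 2H$-parts extracted from \eqref{qricci:xigam}, \eqref{ricci:gamxi} and \eqref{s2es2h}, into which one substitutes $\xi=\sum\xi_{UF}$ and sorts the resulting equivariant bilinear terms by Schur's Lemma, exactly as you describe. Your explicit attention to the two genuinely delicate points --- entries where the target irreducible occurs in the tensor product but the particular contraction still vanishes, and the possibility of accidental cancellation among several terms feeding one entry --- is the substance the paper leaves to the reader, and your plan for handling them is sound.
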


\begin{table}[tp]
  \centering
  \begin{tabular}{lccccccc}
    \toprule
    &\multicolumn4{c}{$\Ricq$}&\multicolumn3{c}{$\Ric$}\\
    \cmidrule(lr){2-5}
    \cmidrule(lr){6-8}
    &$\mathbb R$&$\Lambda_0^2 E$&$\sym 2 E \sym 2 H$&
    $\Lambda_0^2 E \sym 2H$&$\mathbb
    R$&$\Lambda_0^2 E$&$\sym 2 E \sym 2 H$\\
    \midrule
    $\sum_A \inp{\gamma_A}{\omega_A}$&\T&  &  &  &\T&  &  \\
    \midrule
    $\Nt\xi_{33}$&  &  &\T&\T&  &  &\T\\
    $\Nt\xi_{K3}$&  &  &\T&\T&  &  &\T\\
    $\Nt\xi_{E3}$&  &  &\T&\T&  &  &\T\\
    $\Nt\xi_{3H}$&  &\T&\T&\T&  &\T&\T\\
    $\Nt\xi_{KH}$&  &\T&\T&\T&  &\T&\T\\
    $\Nt\xi_{EH}$&  &\T&\T&\T&\T&\T&\T\\
    \midrule
    $\xi_{33} \otimes\xi_{33}$&\T&\T&\T&  &\T&\T&\T\\
    $\xi_{K3} \otimes\xi_{K3}$&\T&\T&\T&  &\T&\T&\T\\
    $\xi_{E3} \otimes\xi_{E3}$&\T&\T&\T&  &\T&\T&\T\\
    $\xi_{3H} \otimes\xi_{3H}$&\T&\T&\T&  &\T&\T&\T\\
    $\xi_{KH} \otimes\xi_{KH}$&\T&\T&\T&  &\T&\T&\T\\
    $\xi_{EH} \otimes\xi_{EH}$&\T&\T&\T&  &\T&\T&\T\\
    \midrule
    $\xi_{33} \odot\xi_{K3}$&  &\T&\T&\T&  &\T&\T\\
    $\xi_{33} \odot\xi_{E3}$&  &\T&  &\T&  &\T&  \\
    $\xi_{33} \odot\xi_{3H}$&  &  &\T&\T&  &  &\T\\
    $\xi_{33} \odot\xi_{KH}$&  &  &\T&\T&  &  &\T\\
    $\xi_{33} \odot\xi_{EH}$&  &  &  &\T&  &  &  \\
    $\xi_{K3} \odot\xi_{E3}$&  &\T&\T&\T&  &\T&\T\\
    $\xi_{K3} \odot\xi_{3H}$&  &  &\T&\T&  &  &\T\\
    $\xi_{K3} \odot\xi_{KH}$&  &  &\T&\T&  &  &\T\\
    $\xi_{K3} \odot\xi_{EH}$&  &  &\T&\T&  &  &\T\\
    $\xi_{E3} \odot\xi_{3H}$&  &  &  &\T&  &  &  \\
    $\xi_{E3} \odot\xi_{KH}$&  &  &\T&\T&  &  &\T\\
    $\xi_{E3} \odot\xi_{EH}$&  &  &\T&\T&  &  &\T\\
    $\xi_{3H} \odot\xi_{KH}$&  &\T&\T&\T&  &\T&\T\\
    $\xi_{3H} \odot\xi_{EH}$&  &\T&  &\T&  &\T&  \\
    $\xi_{KH} \odot\xi_{EH}$&  &\T&\T&\T&  &\T&\T\\
    \bottomrule
  \end{tabular}
  \caption{Ricci curvatures from Theorem~\ref{thm:Ric}.}
  \label{tab:Ric}
\end{table}

Taking Proposition \ref{prop:ricciqkp} (iii) and (iv) into account, using
equations \eqref{qricci:r} and \eqref{ricci:r} we have the following
expressions which determine  the curvature components in
${\mathbb R}_\QK$ and ${\mathbb R}_{\QK^\perp}$, respectively,
\begin{gather}
  \begin{split}
    \tfrac{24n(5n+1)}{n+2} \Ric_{\QK} &= ( - 3 \inp{\xi_{e_i} e_i}{
    \xi_{e_j} e_j} - 5 \inp{\xi_{e_i} e_j}{\xi_{e_j} e_i} + 8 \inp{(
    \Nt_{e_i} \xi )_{e_j} e_i}{e_j} \eqbreak + \sum_{A=I,J,K} ( 4(5n+1)
    \inp{\gamma_A}{\omega_A} +\inp{\xi_{e_i} A e_i}{\xi_{e_j} A e_j}
    \label{rqk}
    \eqbreak + \inp{\xi_{e_i} e_j}{\xi_{Ae_j}Ae_i} - 10\inp{\xi_{e_i}
    e_j}{\xi_{A e_i} A e_j} ) ) \inp\cdot\cdot,
  \end{split}
  \\
  \label{rqkperp}
  \begin{split}
    &\tfrac{8(5n+1)}3 \pi_{\mathbb R} ( \Ric_{\QK^\perp} ) = ( - 3
    \inp{\xi_{e_i} e_i}{\xi_{e_j} e_j} - 5 \inp{\xi_{e_i} e_j}{ \xi_{e_j}
    e_i} + 8 \inp{( \Nt_{e_i} \xi )_{e_j} e_i}{e_j } \eqbreak +
    \sum_{A=I,J,K} ( \inp{\xi_{e_i} A e_i}{\xi_{e_j} A e_j} +
    \inp{\xi_{e_i} e_j}{\xi_{Ae_j}Ae_i} + 2 \inp{\xi_{e_i} e_j}{\xi_{A e_i}
    A e_j} ) )\inp\cdot\cdot.
  \end{split}
\end{gather}

Due to Proposition~\ref{spn:curv1}\itref{item:22aa} and
Proposition~\ref{spn:curv2}\itref{item:40Ebb}, the curvature components in
$( \Lambda_0^2 E)_a$ and $( \Lambda_0^2 E)_b$ are determined respectively
by $2 \Ric_{( \Lambda_0^2 E)_a} = \pi_{\Lambda_0^2 E} ( \Ric + \Ricq )$ and
$2 \Ric_{( \Lambda_0^2 E)_b} = \pi_{\Lambda_0^2 E} ( \Ric - \Ricq )$.
Using equations \eqref{qricci:l02e} and~\eqref{ricci:l02e}, we
obtain the following expressions
\begin{gather}
  \label{ricci:l02ea}
  \begin{split}
    6 \Ric_{( \Lambda_0^2 E)_a} &= - \pi_{\Lambda_0^2 E} ( \inp{\xi_\cdot e_i}{\xi_{e_i} \cdot} + 3 \inp{\xi_\cdot \cdot}{
    \xi_{e_i} e_i} + 4 \inp{\xi_{\xi_{e_i} \cdot} \cdot}{e_i
   } )
    \eqbreak
    - \tfrac{2(2n+1)}n \sum_{A=I,J,K} \pi_{\sym 2 T^*} ( \inp\cdot{
    \xi_{\xi_{e_i} A e_i } A \cdot} + \inp\cdot{(\Nt_{e_i} \xi
    )_{Ae_i} A \cdot} )
    \eqbreak
    + \sum_{A=I,J,K} \pi_{\Lambda_0^2 E} ( \inp{\xi_\cdot A \cdot}{
    \xi_{e_i} A e_i} + \inp{\xi_\cdot e_i}{\xi_{Ae_i} A \cdot
   } )
    \eqbreak
    + \sum_{A=I,J,K} \pi_{\Lambda_0^2 E} (\tfrac 2{n} \inp{\xi_\cdot
    e_i}{\xi_{A \cdot} A e_i} - \tfrac{2(2n+1)}n \inp{\xi_{e_i}
    \cdot}{\xi_{Ae_i} A \cdot} )
    \eqbreak
    + 4 \pi_{\Lambda_0^2 E} ( \inp{( \Nt_\cdot \xi )_{e_i} \cdot}{e_i
   } - \inp{( \Nt_{e_i} \xi )_\cdot \cdot }{e_i} ),
  \end{split}
  \\
  \label{ricci:l02eb}
  \begin{split}
    6 \Ric_{( \Lambda_0^2 E)_b} &= - \pi_{\Lambda_0^2 E} ( \inp{\xi_\cdot e_i}{\xi_{e_i} \cdot} + 3 \inp{\xi_\cdot \cdot}{
    \xi_{e_i} e_i} + 4 \inp{\xi_{\xi_{e_i} \cdot} \cdot}{e_i
   } )
    \eqbreak
    + \tfrac{2(n-1)}n \sum_{A=I,J,K} \pi_{\sym 2 T^*} ( \inp\cdot{
    \xi_{\xi_{e_i} A e_i } A \cdot} + \inp\cdot{(\Nt_{e_i} \xi
    )_{Ae_i} A \cdot} )
    \eqbreak
    + \sum_{A=I,J,K} \pi_{\Lambda_0^2 E} ( \inp{\xi_\cdot A \cdot}{
    \xi_{e_i} A e_i} + \inp{\xi_\cdot e_i}{\xi_{Ae_i} A \cdot
   } )
    \eqbreak
    + \sum_{A=I,J,K} \pi_{\Lambda_0^2 E} (\tfrac 2{n} \inp{\xi_\cdot
    e_i}{\xi_{A \cdot} A e_i} + \tfrac{2(n-1)}n \inp{\xi_{e_i}
    \cdot}{\xi_{Ae_i} A \cdot} )
    \eqbreak
    + 4 \pi_{\Lambda_0^2 E} ( \inp{( \Nt_\cdot \xi )_{e_i} \cdot}{e_i
   } - \inp{( \Nt_{e_i} \xi )_\cdot \cdot }{e_i} ).
  \end{split}
\end{gather}
 
Similarly, using Proposition~\ref{spn:curv3}\itref{item:3122a}
and~\itref{item:211b}, we see that the curvature components in $( \sym 2 E
\sym 2 H )_x$, for $x=a,b$, are determined by
\begin{gather*}
  \Ric_{( \sym 2 E \sym 2 H )_a} =\tfrac14 ( \Ric_{\sym 2
  E \sym 2 H} + 3 \Ricq_{\sym 2 E \sym 2 H} ),\\
  \Ric_{( \sym 2 E \sym 2 H )_b} = \tfrac34 ( \Ric_{\sym 2 E \sym 2 H} -
  \Ricq_{\sym 2 E \sym 2 H} ),
\end{gather*}
which can be given in terms of $\xi$ using equations \eqref{qricci:xigam},
\eqref{ricci:gamxi} and~\eqref{s2es2h}.

\begin{theorem}
  \label{thm:Curv}
  Let $M$ be an almost quaternion\bdash Hermitian $4n$-manifold, $4n\geqslant 8$,
  with minimal $\SP(n)\SP(1)$-connection $\Nt = \nabla + \xi $.
  \begin{asparaenum}
  \item
    \label{item:main-i}
    Using equations \eqref{qricci:xigam}, \eqref{ricci:gamxi},
    \eqref{s2es2h}, \eqref{qricci:l02es2h}, \eqref{rqk}, \eqref{rqkperp},
    \eqref{ricci:l02ea} and \eqref{ricci:l02eb} each of the tensors $
    \sum_{A=I,J,K} \inp{\gamma_A}{\omega_A}$, $\Nt\xi_{UF}$ and $\xi_{UF}
    \odot \xi_{VG}$ contributes to the components of~$R$ in $\mathbb
    R_a+\mathbb R_b$, $(\Lambda^2_0 E)_a+(\Lambda^2_0 E)_b$, $(\sym 2 E
    \sym 2 H)_a+ (\sym 2 E \sym 2 H)_b$ and $\Lambda^2_0 E \sym 2 H $ if
    and only if there is a tick in the corresponding place in
    Table~\ref{tab:CompI}.  An entry with two ticks indicates independent
    contributions to both summands.  For the modules $\mathbb R_x$,
    $\T^{p(a,b)}_{q(a,b)}$ indicates that the contribution is a positive
    multiple of $p(a,b)$ and orthogonal to $q(a,b)$ with $a=\pi_2+6 \pi_1$,
    $b=\pi_2-6 \pi_1$.
  \item
    \label{item:main-ii}
    Taking the image $\pi^\perp(R) = \pi_2 \circ \pi_1 (R)$ into account,
    where $\pi_1 (R)$ is given by equation~\eqref{pir:components}, each of
    the tensors $\Nt\xi_{UF}$ and $\xi_{UF} \odot \xi_{VG}$ contributes to
    the components of $R$ in $V^{22}$, $\Lambda^4_0 E$, $V^{31}\sym 2 H $,
    $ V^{211} \sym 2 H$, $ V^{22} \sym 4 H $, $\Lambda^2_0 E \sym 4 H$ and
    $\sym 4 H$ if and only if there is a tick in the corresponding place in
    Table~\ref{tab:Cur8}.  \qed
  \end{asparaenum}
\end{theorem}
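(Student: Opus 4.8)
The plan is to reduce both parts to a controlled bookkeeping over formulas that are already in hand. For part~\itref{item:main-i}, Propositions~\ref{spn:curv1}, \ref{spn:curv2}, \ref{spn:curv3} and~\ref{prop:ricciqkp} express the components of~$R$ in $\mathbb R_a+\mathbb R_b$, $(\Lambda^2_0E)_a+(\Lambda^2_0E)_b$, $(\sym 2 E \sym 2 H)_a+(\sym 2 E \sym 2 H)_b$ and $\Lambda^2_0E\sym 2 H$ through fixed combinations of $\Ric$, $\Ricq$ and their isotypic parts ($\Ric\pm\Ricq$ on the $a$/$b$ copies, and $\pi_{\mathbb R}(\Ric)$, $\pi_{\mathbb R}(\Ricq)$ on the two lines $\mathbb R_\QK$, $\mathbb R_{\QK^\perp}$), and equations \eqref{qricci:xigam}, \eqref{ricci:gamxi}, \eqref{s2es2h}, \eqref{qricci:l02es2h}, \eqref{rqk}, \eqref{rqkperp}, \eqref{ricci:l02ea}, \eqref{ricci:l02eb} give each of these as a fixed $\SP(n)\SP(1)$-equivariant expression, linear in $\Nt\xi$ and bilinear in~$\xi$, together with the term in $\sum_A\inp{\gamma_A}{\omega_A}$. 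For part~\itref{item:main-ii} the seven modules $V^{22}$, $\Lambda^4_0E$, $V^{31}\sym 2 H$, $V^{211}\sym 2 H$, $V^{22}\sym 4 H$, $\Lambda^2_0E\sym 4 H$, $\sym 4 H$ all have $\Ric=\Ricq=0$ (Propositions~\ref{spn:curv1}--\ref{spn:curv4}, Schur's lemma killing the leftover components of $\Ricq$), hence lie in~$\QK^\perp$ and are recovered from $\pi^\perp(R)=\pi_2\circ\pi_1(R)$ with $\pi_1(R)$ as in~\eqref{pir:components}, which is again linear in~$\Nt\xi$ and bilinear in~$\xi$.

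With these formulas fixed I would substitute $\xi=\xi_{33}+\xi_{K3}+\xi_{E3}+\xi_{3H}+\xi_{KH}+\xi_{EH}$ and, for each target module~$M$, isolate the contribution of each pair $\xi_{UF}\odot\xi_{VG}$ and of each $\Nt\xi_{UF}$. By equivariance this contribution is a fixed element of the (finite-dimensional) space of $\SP(n)\SP(1)$-maps $(UF)\otimes(VG)\to M$ (respectively $T^*M\otimes (UF)\to M$, using $T^*M\cong EH$), and the theorem asserts precisely that this element is non-zero exactly at the ticked entries. The ``only if'' direction is the representation-theoretic half: decompose each $(UF)\otimes(VG)$ and each $T^*M\otimes (UF)$ into irreducibles, using $E\otimes E=\sym 2 E+\Lambda^2_0E+\mathbb R\omega_E$, the Clebsch--Gordan rule for $\sym p H\otimes\sym q H$ and the standard $\SP(n)$-plethysms, discard the~$M$ that do not occur, and for those that remain check from the explicit contraction pattern in the formula --- the $e_i$-contractions and the $I,J,K$-insertions being themselves $\SP(n)\SP(1)$-maps --- whether the $M$-component actually drops out.

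The ``if'' direction is the computational heart. For each ticked entry I would build explicit model tensors in $U\otimes F$, in the style of the $\Phi_i$ of Proposition~\ref{glnhsplit1} and of the generators used in Propositions~\ref{spn:curv1}--\ref{spn:curv4}, insert them into the relevant formula, and verify the $M$-component is non-zero, e.g. by pairing against a known element of~$M$ or by a norm computation via~\eqref{scalarproduct:curvatures}. It is natural to treat first the diagonal terms $\xi_{UF}\otimes\xi_{UF}$, whose scalar contributions are already recorded via \eqref{eq:scal}, \eqref{eq:scal-q} and Theorem~\ref{thm:Ric}, then the $15$ off-diagonal $\xi_{UF}\odot\xi_{VG}$, and finally the $6$ linear terms $\Nt\xi_{UF}$; for the latter, the identities coming from $d^2\omega_A=0$ (used to produce \eqref{la02e}, \eqref{s2es2h}, \eqref{qricci:l02es2h}) already constrain how $\Nt\xi$ enters the simplified formulas, so those rows can largely be read off directly.

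I expect the bulk of the work, and the main risk of error, to sit in this last step: there are $15$ off-diagonal pairs, $6$ diagonal terms and $6$ derivative terms to test against $4$ (part~\itref{item:main-i}) or $7$ (part~\itref{item:main-ii}) families of targets; many of the equivariant maps carry $n$-dependent coefficients, so a contribution may vanish only in low dimensions; and the two copies of each of $\mathbb R$, $\Lambda^2_0E$ and $\sym 2 E \sym 2 H$ inside~$\FCur$ must be kept apart throughout, which is why the arguments stay tied to the $a$/$b$ labels, i.e.\ to the eigenvalues of $L$ and $L_\sigma$ and to $\Ric\pm\Ricq$ as in Propositions~\ref{spn:curv1}--\ref{spn:curv3}. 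Pinning down the precise positive multiples and orthogonality statements encoded by the $\T^{p}_{q}$ entries of Table~\ref{tab:CompI} will require carrying the constants rather than merely tracking vanishing, and is the most delicate point.
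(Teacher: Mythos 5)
Your proposal matches the paper's (largely implicit) argument: the theorem is stated with its proof consisting precisely of the accumulated formulas \eqref{qricci:xigam}--\eqref{ricci:l02eb} and \eqref{pir:components} together with Propositions~\ref{spn:curv1}--\ref{spn:curv4} and~\ref{prop:ricciqkp}, after which one substitutes $\xi=\sum\xi_{UF}$, uses Schur's lemma and the plethysms of $(UF)\otimes(VG)$ and $EH\otimes UF$ to rule modules out, and tests explicit model tensors to confirm the surviving entries. You also correctly note that the table is only well defined relative to the chosen formulas (the point of the paper's closing remark on Bianchi relations), so this is essentially the paper's own route.
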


\begin{table}[tp]
  \centering
  \begin{tabular}{llccc}
    \toprule
    $4n \geqslant 8$
    &$\mathbb R_x$&$(\Lambda^2_0 E)_x$&
    $(\sym 2 E \sym 2 H)_x$
    &$\Lambda^2_0 E \sym 2 H $ \\
    \midrule
    $\sum_A \inp{\gamma_A}{\omega_A}$
    &$\T_{k_1 a - k_2 b}^{2a+b}$&  &  &  \\
    \midrule
    $\Nt\xi_{33}$
    &  &  &\T&\T\\
    $\Nt\xi_{K3}$
    &  &  &\T&\T\\
    $\Nt\xi_{E3}$
    &  &  &\T&\T\\
    $\Nt\xi_{3H}$
    &  &\T&\T&\T\\
    $\Nt\xi_{KH}$
    &  &\T&\T&\T\\
    $\Nt\xi_{EH}$
    &$\T_{a+b}^{2k_1a-k_2b}$&\T&\T&\T\\
    \midrule
    $\xi_{33} \otimes \xi_{33}$
    &$\T_{2a+5b}^{5k_1a-k_2b}$&
    $\T$&\T& \\[1mm]
    $\xi_{K3} \otimes \xi_{K3}$
    &$\T_{2a-b}^{k_1a + k_2b}$&\T\T&\T\T&  \\[1mm]
    $\xi_{E3} \otimes \xi_{E3}$
    &$\T_{g(n)a+f(n)b}^{2k_1 f(n) a - k_2 g(n) b}$&\T&\T&  \\[1mm]
    $\xi_{3H} \otimes \xi_{3H}$
    &$\T_{5a-7b}^{-14k_1a - 5 k_2b}$&\T&\T&  \\[1mm]
    $\xi_{KH} \otimes \xi_{KH}$
    &$\T_{10a-7b}^{-17k_1a -5k_2b}$&\T\T&\T\T&  \\[1mm]
    $\xi_{EH} \otimes \xi_{EH}$
    &$\T^{-h(n)a+k_2^2 b}_{2k_1k_2a+h(n)b}$&\T&\T&  \\[1mm]
    \midrule
    $\xi_{33} \odot \xi_{K3}$
    &  &\T&\T&\T\\
    $\xi_{33} \odot \xi_{E3}$
    &  &\T&  &\T\\
    $\xi_{33} \odot \xi_{3H}$
    &  &  &\T&\T\\
    $\xi_{33} \odot \xi_{KH}$
    &  &  &\T&\T\\
    $\xi_{33} \odot \xi_{EH}$
    &  &  &  &\T\\
    $\xi_{K3} \odot \xi_{E3}$
    &  &  &\T&\T\\
    $\xi_{K3} \odot \xi_{3H}$
    &  &  &\T&\T\\
    $\xi_{K3} \odot \xi_{KH}$
    &  &  &\T\T&\T\\
    $\xi_{K3} \odot \xi_{EH}$
    &  &  &\T&\T\\
    $\xi_{E3} \odot \xi_{3H}$
    &  &  &  &\T\\
    $\xi_{E3} \odot \xi_{KH}$
    &  &  &\T&\T\\
    $\xi_{E3} \odot \xi_{EH}$
    &  &  &\T&\T\\
    $\xi_{3H} \odot \xi_{KH}$
    &  &\T&\T&\T\\
    $\xi_{3H} \odot \xi_{EH}$
    &  &\T&  &\T\\
    $\xi_{KH} \odot \xi_{EH}$
    &  &\T&  &\T\\
    \bottomrule
  \end{tabular}
  \caption{Curvature complementary (I) to $\sym 4 E$, $4n\geqslant8$, from
  Theorem~\ref{thm:Curv}\itref{item:main-i}.  Here $x=a,b$, $k_1=n-1$, $k_2
  = 2n+1$, $f(n) = n^2+3n+1$, $g(n) = n^2+1$, $h(n)=(2n-1)(n+1)$.} 
  \label{tab:CompI}
\end{table}

\begin{table}[tp]
  \centering
  \begin{tabular}{lccccccc}
    \toprule
    $4n \geqslant 8$&$V^{22}$&$\Lambda^4_0 E$
    &$V^{31}\sym 2 H $&
    $ V^{211} \sym 2 H$&$ V^{22} \sym 4 H  $
    &$\Lambda^2_0 E \sym 4 H$&$\sym 4 H$
    \\
    \midrule
    $\Nt\xi_{33}$&  &  &  &\T&  &\T&  \\
    $\Nt\xi_{K3}$&  &  &\T&\T&\T&\T&  \\
    $\Nt\xi_{E3}$&  &  &  &  &  &\T&\T\\
    $\Nt\xi_{3H}$&  &\T&  &\T&  &  &  \\
    $\Nt\xi_{KH}$&\T&  &\T&\T&  &  &  \\
    $\Nt\xi_{EH}$&  &  &  &  &  &  &  \\
    \midrule
    $\xi_{33} \otimes \xi_{33}$&\T&\T&  &\T&\T&\T&\T\\
    $\xi_{K3} \otimes \xi_{K3}$&\T&\T&\T&  &\T&\T&\T\\
    $\xi_{E3} \otimes \xi_{E3}$&  &  &  &  &  &\T&\T\\
    $\xi_{3H} \otimes \xi_{3H}$&\T&\T&  &\T&  &  &  \\
    $\xi_{KH} \otimes \xi_{KH}$&\T&\T&\T&  &  &  &  \\
    $\xi_{EH} \otimes \xi_{EH}$&  &  &  &  &  &  &  \\
    \midrule
    $\xi_{33} \odot \xi_{K3}$&\T&\T&\T&\T&\T&\T&  \\
    $\xi_{33} \odot \xi_{E3}$&  &\T&  &\T&  &\T&  \\
    $\xi_{33} \odot \xi_{3H}$&  &  &  &\T&\T&\T&\T\\
    $\xi_{33} \odot \xi_{KH}$&  &  &\T&\T&\T&\T&  \\
    $\xi_{33} \odot \xi_{EH}$&  &  &  &\T&  &\T&  \\
    $\xi_{K3} \odot \xi_{E3}$&\T&  &\T&\T&\T&\T&  \\
    $\xi_{K3} \odot \xi_{3H}$&  &  &\T&\T&\T&\T&  \\
    $\xi_{K3} \odot \xi_{KH}$&  &  &\T&\T&\T&\T&\T\\
    $\xi_{K3} \odot \xi_{EH}$&  &  &\T&\T&\T&\T&  \\
    $\xi_{E3} \odot \xi_{3H}$&  &  &  &\T&  &\T&  \\
    $\xi_{E3} \odot \xi_{KH}$&  &  &\T&\T&\T&\T&  \\
    $\xi_{E3} \odot \xi_{EH}$&  &  &  &  &  &\T&\T\\
    $\xi_{3H} \odot \xi_{KH}$&\T&\T&\T&\T&  &  &  \\
    $\xi_{3H} \odot \xi_{EH}$&  &\T&  &\T&  &  &  \\
    $\xi_{KH} \odot \xi_{EH}$&\T&  &\T&\T&  &  &  \\
    \bottomrule
  \end{tabular}
  \caption{Curvature complementary (II) to $\sym 4 E$, $4n\geqslant8$, from
  Theorem~\ref{thm:Curv}\itref{item:main-ii}.}
  \label{tab:Cur8}
\end{table}

A number of examples of almost quaternion\bdash Hermitian manifolds with
various different types of intrinsic torsion are given in Cabrera \& Swann
\cite{Cabrera-S:aqH-torsion}.

\begin{corollary}
  On an almost quaternion\bdash Hermitian manifold that is quaternionic,
  i.e., $\xi \in (\Lambda^3_0E + K + E)H$, there is no curvature in
  $V^{22}\sym 4H$, $\Lambda^2_0E\sym 4H$ or $\sym 4H$.\qed
\end{corollary}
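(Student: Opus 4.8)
The plan is to exploit the $\SP(1)$\bdash weight (the $H$\bdash content) of the intrinsic torsion. By definition a \emph{quaternionic} almost quaternion\bdash Hermitian manifold is one with $\xi = \xi_H = \xi_{3H} + \xi_{KH} + \xi_{EH}$, i.e.\ the three $\sym 3 H$\bdash components $\xi_{33}$, $\xi_{K3}$, $\xi_{E3}$ vanish identically. Since $\xi_{3H}$, $\xi_{KH}$ and $\xi_{EH}$ lie in $\Lambda^3_0 E \otimes H$, $K \otimes H$ and $E \otimes H$ respectively, the torsion $\xi_H$ is then a section of a bundle whose fibre is an $\SP(n)\SP(1)$\bdash module of $\SP(1)$\bdash type $H$.

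First I would observe that the three modules in question all sit inside $U^{22}\sym 4 H$, which by Proposition~\ref{prop:ricciqkp}\itref{item:QKp} is contained in $\QK^\perp$, and that $\QK = \sym 4 E + \mathbb R_\QK$; hence the component of $R$ in any of them equals the corresponding component of $\pi^\perp(R) = \pi_2 \circ \pi_1(R)$. By equation~\eqref{pir:components} the tensor $\pi_1(R)$ is the value of fixed $\SP(n)\SP(1)$\bdash equivariant linear maps applied to $\Nt\xi$ and to the two quadratic\bdash in\bdash$\xi$ expressions $\talt(\xi\circ\xi)$ and $\taltb(\xi\otimes\xi)$; composing with the equivariant map $\pi_2$ the same holds for $\pi^\perp(R)$, and therefore for the components of $R$ in $V^{22}\sym 4 H$, $\Lambda^2_0 E \sym 4 H$ and $\sym 4 H$.

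The key step is then purely representation\bdash theoretic. When $\xi = \xi_H$, pointwise $\Nt\xi_H$ lies in $T^* M \otimes (\Lambda^3_0 E + K + E)H \cong EH \otimes (\Lambda^3_0 E + K + E)H$, which is of $\SP(1)$\bdash type $H \otimes H \cong \mathbb R + \sym 2 H$; likewise every bilinear expression in $\xi_H$ factors through $\xi_H \otimes \xi_H$, which is again of $\SP(1)$\bdash type $H \otimes H \cong \mathbb R + \sym 2 H$. In particular none of $\Nt\xi_H$, $\talt(\xi_H\circ\xi_H)$, $\taltb(\xi_H\otimes\xi_H)$ has a $\sym 4 H$\bdash isotypic component. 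On the other hand $V^{22}\sym 4 H$, $\Lambda^2_0 E \sym 4 H$ and $\sym 4 H$ are each $\sym 4 H$\bdash isotypic as $\SP(1)$\bdash modules, so by Schur's Lemma the equivariant maps of the previous paragraph send $\Nt\xi_H$ and the quadratic terms to zero in these three summands; hence the curvature has no component there.

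I expect the only point needing care is the bookkeeping of the second paragraph, i.e.\ verifying that these three modules are reached only through $\pi^\perp(R)$ and carry no separate quaternionic\bdash K\"ahler term — which is immediate from $\QK = \sym 4 E + \mathbb R_\QK$. As a cross\bdash check, the statement can also be read straight off Table~\ref{tab:Cur8}: in each of the columns headed $V^{22}\sym 4 H$, $\Lambda^2_0 E \sym 4 H$ and $\sym 4 H$, every tick occurs in a row whose label contains one of $\xi_{33}$, $\xi_{K3}$, $\xi_{E3}$, precisely the components that vanish for a quaternionic structure.
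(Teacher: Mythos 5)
Your argument is correct. The paper itself offers no written proof: the corollary is stated with an immediate \qed, the intended justification being a direct lookup in Table~\ref{tab:Cur8} (Theorem~\ref{thm:Curv}\itref{item:main-ii}), where every tick in the columns $V^{22}\sym 4H$, $\Lambda^2_0E\sym 4H$ and $\sym 4H$ sits in a row containing one of $\xi_{33}$, $\xi_{K3}$, $\xi_{E3}$ — exactly the observation you make as your ``cross-check''. What you add is the conceptual mechanism behind that pattern: since these three summands are the $\sym 4H$-isotypic pieces of $\QK^\perp$ and the component of $R$ in each is reached only through the $\SP(n)\SP(1)$-equivariant expression $\pi_2\circ\pi_1(R)$ of equation~\eqref{pir:components}, it suffices to note that when $\xi=\xi_H\in(\Lambda^3_0E+K+E)H$ both $\Nt\xi_H$ and anything factoring through $\xi_H\otimes\xi_H$ have $\SP(1)$-type contained in $H\otimes H\cong\sym2H+\mathbb R$, so Schur's Lemma kills their images in any $\sym 4H$-isotypic target. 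This is sound (your identification of the $\gamma_A$-free form of $\pi_1(R)$, and the containment of all three modules in $\QK^\perp$ via Proposition~\ref{prop:ricciqkp}\itref{item:QKp}, are both correct), and it buys a self-contained proof that does not depend on the detailed computations underlying the table; the table-based proof, on the other hand, is shorter given that the table has already been established and yields finer information about which individual components of $\xi$ feed each module.
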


\begin{corollary}
  If $\xi$ lies in $E(\sym3H+H)$, then there is no curvature in $V^{22}$,
  $\Lambda^4_0E$, $V^{31}\sym2H$, $V^{211}\sym2H$ or $V^{22}\sym4H$.\qed
\end{corollary}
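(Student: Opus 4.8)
The plan is to reduce the statement to a direct inspection of Table~\ref{tab:Cur8}. First I would translate the hypothesis $\xi \in E(\sym 3 H + H)$ into the vanishing of the intrinsic torsion components indexed by $U = \Lambda^3_0 E$ and $U = K$; that is, $\xi_{33} = \xi_{K3} = \xi_{3H} = \xi_{KH} = 0$, so that only $\xi_{E3} \in E\sym 3 H$ and $\xi_{EH} \in E H$ may be nonzero. Consequently $\Nt\xi_{33} = \Nt\xi_{K3} = \Nt\xi_{3H} = \Nt\xi_{KH} = 0$ as well, and the only tensors among those listed in Theorem~\ref{thm:Curv}\itref{item:main-ii} that need not vanish are $\Nt\xi_{E3}$, $\Nt\xi_{EH}$, $\xi_{E3}\otimes\xi_{E3}$, $\xi_{EH}\otimes\xi_{EH}$ and $\xi_{E3}\odot\xi_{EH}$; every other row of Table~\ref{tab:Cur8} involves a factor that is identically zero.

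Next I would invoke Theorem~\ref{thm:Curv}\itref{item:main-ii}: the components of $R$ in $V^{22}$, $\Lambda^4_0 E$, $V^{31}\sym 2 H$, $V^{211}\sym 2 H$ and $V^{22}\sym 4 H$ — all submodules of $\QK^\perp$ — are universal linear combinations of the tensors $\Nt\xi_{UF}$ and $\xi_{UF}\odot\xi_{VG}$, such a tensor occurring with nonzero coefficient exactly when the corresponding entry of Table~\ref{tab:Cur8} carries a tick. Reading off the five surviving rows $\Nt\xi_{E3}$, $\Nt\xi_{EH}$, $\xi_{E3}\otimes\xi_{E3}$, $\xi_{EH}\otimes\xi_{EH}$, $\xi_{E3}\odot\xi_{EH}$, one sees that their only ticks lie in the columns $\Lambda^2_0 E\sym 4 H$ and $\sym 4 H$; in particular none of them contributes to $V^{22}$, $\Lambda^4_0 E$, $V^{31}\sym 2 H$, $V^{211}\sym 2 H$ or $V^{22}\sym 4 H$. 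Hence $R$ has no component in these five modules, which is the assertion.

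There is no substantial obstacle here: the corollary is a bookkeeping consequence of the main theorem. The only points requiring a moment's care are that the $\sym 4 E$-component of $R$ — the part genuinely unconstrained by $\xi$ — is not among the modules in question, so its freedom is irrelevant, and that the scalar $\sum_A \inp{\gamma_A}{\omega_A}$ does not occur as a row of Table~\ref{tab:Cur8} and therefore cannot feed into any of these modules either.
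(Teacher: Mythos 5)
Your proof is correct and is exactly the argument the paper intends: the corollary is stated with \qed as an immediate consequence of Theorem~\ref{thm:Curv}\itref{item:main-ii}, obtained by setting $\xi_{33}=\xi_{K3}=\xi_{3H}=\xi_{KH}=0$ and observing that the surviving rows of Table~\ref{tab:Cur8} ($\Nt\xi_{E3}$, $\Nt\xi_{EH}$, $\xi_{E3}\otimes\xi_{E3}$, $\xi_{EH}\otimes\xi_{EH}$, $\xi_{E3}\odot\xi_{EH}$) have ticks only in the $\Lambda^2_0E\sym 4H$ and $\sym 4H$ columns. Your reading of the table and the two cautionary remarks at the end are all accurate.
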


\begin{corollary}
  For $\xi\in\Lambda^3_0E(\sym3H+H)$ there is no curvature in
  $V^{31}\sym2H$.\qed
\end{corollary}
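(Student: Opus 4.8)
The plan is to deduce this directly from Theorem~\ref{thm:Curv}\itref{item:main-ii} and Table~\ref{tab:Cur8}. The hypothesis $\xi\in\Lambda^3_0E(\sym3H+H)$ says precisely that in the decomposition $\xi=\xi_{33}+\xi_{K3}+\xi_{E3}+\xi_{3H}+\xi_{KH}+\xi_{EH}$ of \S\ref{sec:torsion} only the two components $\xi_{33}\in\Lambda^3_0E\sym3H$ and $\xi_{3H}\in\Lambda^3_0E\,H$ can be non-zero, while $\xi_{K3}$, $\xi_{E3}$, $\xi_{KH}$ and $\xi_{EH}$ all vanish identically.

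First I would observe that $V^{31}\sym2H$ occurs as one of the irreducible summands of $\QK^\perp$ listed in Proposition~\ref{prop:ricciqkp}\itref{item:QKp}. Hence the component of the curvature $R$ in $V^{31}\sym2H$ coincides with the $V^{31}\sym2H$-part of $\pi^\perp(R)=\pi_2\circ\pi_1(R)$, which by equation~\eqref{pir:components} is a fixed $\SP(n)\SP(1)$-equivariant linear expression in the components of $\Nt\xi$ and of $\xi\otimes\xi$. Theorem~\ref{thm:Curv}\itref{item:main-ii} and the $V^{31}\sym2H$-column of Table~\ref{tab:Cur8} record exactly which of the tensors $\Nt\xi_{UF}$ and $\xi_{UF}\odot\xi_{VG}$ can feed into this part.

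Reading off that column, every ticked entry is one of $\Nt\xi_{K3}$, $\Nt\xi_{KH}$, $\xi_{K3}\otimes\xi_{K3}$, $\xi_{KH}\otimes\xi_{KH}$, $\xi_{33}\odot\xi_{K3}$, $\xi_{33}\odot\xi_{KH}$, $\xi_{K3}\odot\xi_{E3}$, $\xi_{K3}\odot\xi_{3H}$, $\xi_{K3}\odot\xi_{KH}$, $\xi_{K3}\odot\xi_{EH}$, $\xi_{E3}\odot\xi_{KH}$, $\xi_{3H}\odot\xi_{KH}$ or $\xi_{KH}\odot\xi_{EH}$, and each of these involves at least one of the vanishing components $\xi_{K3}$, $\xi_{E3}$, $\xi_{KH}$, $\xi_{EH}$ (indeed, a $\xi_{K3}$- or a $\xi_{KH}$-factor in every case). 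So under the hypothesis every summand contributing to the $V^{31}\sym2H$-component of $R$ is zero, which is the assertion.

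There is no serious obstacle once Theorem~\ref{thm:Curv} is available; the argument is pure bookkeeping against the table. If instead one wanted a table-free proof, the genuine work would be to verify directly that applying $\pi_1$ and then projecting onto $V^{31}\sym2H$ annihilates everything built from $\Nt\xi_{33}$, $\Nt\xi_{3H}$ and the symmetric square of $\xi_{33}+\xi_{3H}$ --- equivalently, that $V^{31}$ does not survive in the relevant $\SP(n)$-decompositions of $E\otimes\Lambda^3_0E$ and $\Lambda^3_0E\otimes\Lambda^3_0E$ twisted by the appropriate $\SP(1)$-factors. That decomposition computation is exactly what Table~\ref{tab:Cur8} encodes, so invoking Theorem~\ref{thm:Curv} is the efficient route.
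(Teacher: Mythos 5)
Your proof is correct and is exactly the argument the paper intends: the corollary carries no separate proof because it is read off directly from the $V^{31}\sym2H$ column of Table~\ref{tab:Cur8}, every ticked entry of which involves a $\xi_{K3}$- or $\xi_{KH}$-factor, and these components vanish when $\xi\in\Lambda^3_0E(\sym3H+H)$. Your preliminary observation that $V^{31}\sym2H\subset\QK^\perp$, so that this component of $R$ is entirely captured by $\pi^\perp(R)$ and hence determined linearly by $\Nt\xi$ and $\xi\otimes\xi$, is the correct justification for why the table accounts for all possible contributions.
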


\begin{corollary}
  Let $p\in M$.  If $\xi_p$ lies in $(\Lambda^3_0E+E)\sym
  3H+(\Lambda^3_0E+K)H$ or in $K\sym3H$ and $\pi_{\mathbb R_a+\mathbb
  R_b}(R)$ is proportional to $2\pi_1+\pi_2$ at~$p$, then $\xi_p=0$. \qed
\end{corollary}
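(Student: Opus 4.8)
Here the plan is to recast both hypotheses as the single assertion that the $\mathbb R_{\QK^\perp}$-component of~$R$ vanishes at~$p$, and then extract the conclusion from the signs recorded in the $\mathbb R_x$-column of Table~\ref{tab:CompI}. First I would recall from Proposition~\ref{prop:ricciqkp}\itref{item:QKpR} that inside $\mathbb R_a+\mathbb R_b$ the lines $\mathbb R_\QK=\mathbb R(2\pi_1+\pi_2)$ and $\mathbb R_{\QK^\perp}=\mathbb R((n+2)\pi_2-18n\pi_1)$ are mutually orthogonal; hence the assumption that $\pi_{\mathbb R_a+\mathbb R_b}(R)$ be proportional to $2\pi_1+\pi_2$ at~$p$ says exactly that $\pi_{\mathbb R_{\QK^\perp}}(R)=0$ at~$p$. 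By Proposition~\ref{prop:ricciqkp} this component is detected by $\pi_{\mathbb R}(\Ric_{\QK^\perp})$, so the hypothesis is equivalent to the vanishing at~$p$ of the scalar factor on the right of~\eqref{rqkperp}.

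Next I would use the torsion hypothesis to kill the terms in~\eqref{rqkperp} that are not purely algebraic in~$\xi_p$. The contraction $\xi\mapsto\xi_{e_i}e_i$ is an $\SP(n)\SP(1)$-equivariant map $T^*M\otimes\Lambda^2_0E\sym2H\to TM\cong EH$, and as none of the irreducibles $\Lambda^3_0E\sym3H$, $K\sym3H$, $E\sym3H$, $\Lambda^3_0EH$, $KH$ is isomorphic to~$EH$, Schur's lemma gives $\xi_{e_i}e_i=(\xi_{EH})_{e_i}e_i$; in particular $\xi_{e_i}e_i$ and the one-form $\theta^\xi$ vanish wherever $\xi_{EH}$ does. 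Since in both cases $\xi_{EH}$ vanishes, the trace term $-3\inp{\xi_{e_i}e_i}{\xi_{e_j}e_j}$ disappears, and the derivative term $8\inp{(\Nt_{e_i}\xi)_{e_j}e_i}{e_j}$, which by the skew-symmetry $\inp{\xi_XY}{Z}=-\inp{\xi_XZ}{Y}$ equals $-8\,\mathrm{div}_{\Nt}(\xi_{e_i}e_i)$ and so involves only $\Nt\xi_{EH}$, disappears as well. Thus~\eqref{rqkperp} collapses to the algebraic identity, evaluated on~$\xi_p$,
\[
0=-5\inp{\xi_{e_i}e_j}{\xi_{e_j}e_i}+\sum_{A=I,J,K}\bigl(\inp{\xi_{e_i}Ae_i}{\xi_{e_j}Ae_j}+\inp{\xi_{e_i}e_j}{\xi_{Ae_j}Ae_i}+2\inp{\xi_{e_i}e_j}{\xi_{Ae_i}Ae_j}\bigr).
\]

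Finally I would split $\xi_p$ into its $\SP(n)\SP(1)$-components and read this off via Theorem~\ref{thm:Curv}\itref{item:main-i} and Table~\ref{tab:CompI}. In the $\mathbb R_x$-column the cross terms $\xi_{UF}\odot\xi_{VG}$ and all the $\Nt\xi_{UF}$ other than $\Nt\xi_{EH}$ (already removed) carry no entry, while $\sum_A\inp{\gamma_A}{\omega_A}$ contributes along $2\pi_1+\pi_2$, i.e.\ entirely inside~$\mathbb R_\QK$. Hence $\pi_{\mathbb R_{\QK^\perp}}(R)_p$ is the sum over the surviving diagonal terms $\xi_{UF}\otimes\xi_{UF}$ of a positive multiple of $\norm{\xi_{UF,p}}^2$ times a fixed vector $\alpha a+\beta b$ (with $a=\pi_2+6\pi_1$, $b=\pi_2-6\pi_1$), whose $\mathbb R_{\QK^\perp}$-part has sign $\operatorname{sgn}(2\beta-\alpha)$ since $\{a,b\}$ is orthogonal with $\norm a^2=1152n(2n+1)$, $\norm b^2=2304n(n-1)$. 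Using the directions $5k_1a-k_2b$, $k_1a+k_2b$, $2k_1f(n)a-k_2g(n)b$, $-14k_1a-5k_2b$, $-17k_1a-5k_2b$ from the table ($k_1=n-1$, $k_2=2n+1$, $f(n)=n^2+3n+1$, $g(n)=n^2+1$), this sign is negative for $\Lambda^3_0E\sym3H$, $E\sym3H$, $\Lambda^3_0EH$, $KH$ and positive for $K\sym3H$. Therefore if $\xi_p\in(\Lambda^3_0E+E)\sym3H+(\Lambda^3_0E+K)H$ the vanishing of $\pi_{\mathbb R_{\QK^\perp}}(R)_p$ is the vanishing of a sum of four non-positive terms, forcing $\xi_{33}$, $\xi_{E3}$, $\xi_{3H}$, $\xi_{KH}$ to vanish at~$p$; and if $\xi_p\in K\sym3H$ it forces $\xi_{K3}$ to vanish at~$p$. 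In both cases $\xi_p=0$.

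The step I expect to be the main obstacle is the disposal of the derivative term in the second paragraph: one must recognise that $\inp{(\Nt_{e_i}\xi)_{e_j}e_i}{e_j}$ is a divergence of the trace $\xi_{e_i}e_i$ and hence detects only $\Nt\xi_{EH}$, and then argue that this vanishes — the single point where the hypothesis is effectively used on a neighbourhood of~$p$ rather than merely at~$p$. The sign bookkeeping in the third paragraph is elementary but $n$-dependent and should be carried out explicitly.
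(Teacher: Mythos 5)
Your proof is correct and is essentially the argument the paper leaves implicit (the corollary is stated without proof): Proposition~\ref{prop:ricciqkp} converts the curvature hypothesis into the vanishing of the $\mathbb R_{\QK^\perp}$-component of~$R$, and the $\mathbb R_x$-column of Table~\ref{tab:CompI} shows that the only surviving contributions are the diagonal terms $\xi_{UF}\otimes\xi_{UF}$, whose projections onto $\mathbb R_{\QK^\perp}$ carry one common strict sign for the four modules of the first torsion class and the opposite sign for $K\sym 3H$ --- your bookkeeping with $\norm a^2=1152n(2n+1)$, $\norm b^2=2304n(n-1)$, $\inp ab=0$ checks out. You are also right to flag that discarding the $\Nt\xi_{EH}$ term needs $\xi_{EH}$ to vanish near~$p$ rather than only at~$p$; that is the intended reading, the corollary being the pointwise analogue of the Bor--Hern\'andez Lamoneda theorem, where the class condition is likewise imposed on $\xi$ as a field.
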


The above result is a pointwise version of the following global theorem for
compact manifolds.  

\begin{corollary}[Bor \& Hern\'andez Lamoneda \cite{Bor-HL:aqH}]
  Suppose $M$ is compact, that $\xi\in
  (\Lambda^3_0E+E)\sym3H+(\Lambda^3_0E+K+E)H$ and that
  \begin{equation*}
    (n+2)\int_M \scalq \geqslant 3n\int_M \scal,
  \end{equation*}
  then $\xi=0$ and $M$ is quaternionic K\"ahler.
\end{corollary}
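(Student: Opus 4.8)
The plan is to derive the reverse integral inequality directly from the scalar curvature formulas~\eqref{eq:scal} and~\eqref{eq:scal-q}, and then let the hypothesis force equality. First I would form the linear combination $(n+2)\scalq - 3n\scal$ from those two equations. In the coefficient of $\sum_A\inp{\gamma_A}{\omega_A}$ this produces $(n+2)\cdot 2n - 3n\cdot\tfrac{2(n+2)}3 = 0$, so the one term in the scalar curvatures that is not controlled by the torsion drops out. Substituting the coefficients, what survives is
\begin{equation*}
  \begin{split}
    (n+2)\scalq - 3n\scal &= -2(3n-1)\norm{\xi_{33}}^2 + 2(n+1)\norm{\xi_{K3}}^2 - 2n(n+1)\norm{\xi_{E3}}^2 \\
    &\quad - (n+4)\norm{\xi_{3H}}^2 - 2(n+9)\norm{\xi_{KH}}^2 - \tfrac{2(3n+1)(4n+5)}3\norm{\xi_{EH}}^2 \\
    &\quad + 48(2n+1)(n+1)\, d^*\theta^\xi.
  \end{split}
\end{equation*}

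Next I would integrate over the compact manifold $M$. The divergence term contributes nothing, since $\int_M d^*\theta^\xi = 0$. The hypothesis $\xi\in(\Lambda^3_0E+E)\sym3H+(\Lambda^3_0E+K+E)H$ is precisely the condition $\xi_{K3}\equiv 0$, so the only term with a positive coefficient also vanishes, leaving
\begin{equation*}
  \begin{split}
    (n+2)\int_M\scalq - 3n\int_M\scal &= -\int_M\Bigl( 2(3n-1)\norm{\xi_{33}}^2 + 2n(n+1)\norm{\xi_{E3}}^2 + (n+4)\norm{\xi_{3H}}^2 \\
    &\qquad + 2(n+9)\norm{\xi_{KH}}^2 + \tfrac{2(3n+1)(4n+5)}3\norm{\xi_{EH}}^2 \Bigr) \leqslant 0,
  \end{split}
\end{equation*}
because every remaining coefficient is strictly positive for $n\geqslant 2$. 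Combined with the assumed inequality $(n+2)\int_M\scalq\geqslant 3n\int_M\scal$ this forces the integral on the right to vanish; as its integrand is a sum of non-negative terms, each of $\xi_{33}$, $\xi_{E3}$, $\xi_{3H}$, $\xi_{KH}$, $\xi_{EH}$ vanishes identically. Together with $\xi_{K3}\equiv 0$ this gives $\xi\equiv 0$, so the holonomy of $M$ reduces to $\SP(n)\SP(1)$; that is, $M$ is quaternionic K\"ahler.

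The only genuine point to verify is the sign bookkeeping: after the $\gamma$-terms cancel and the divergence integrates away, every surviving coefficient other than that of $\norm{\xi_{K3}}^2$ must be strictly positive, which is exactly why the $K\sym3H$ component has to be excluded from the hypothesis (and why, in the pointwise corollary above, one needed additional information on $\pi_{\mathbb R_a+\mathbb R_b}(R)$ to absorb $\xi_{K3}$). Everything else is the divergence theorem applied to $d^*\theta^\xi$ and the characterisation of quaternionic K\"ahler manifolds, $n>1$, by vanishing intrinsic $\SP(n)\SP(1)$-torsion.
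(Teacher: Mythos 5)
Your proposal is correct and follows exactly the paper's argument: form $(n+2)\scalq-3n\scal$ from equations \eqref{eq:scal} and \eqref{eq:scal-q}, note that the $\sum_A\inp{\gamma_A}{\omega_A}$ terms cancel and that $\int_M d^*\theta^\xi=0$, use the hypothesis to kill the one positively-weighted term $\norm{\xi_{K3}}^2$, and conclude from the strict negativity of the remaining coefficients; your explicit coefficients all check against the two scalar curvature formulas. The paper's proof is just a terser version of the same computation.
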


\begin{proof}
  Subtracting the right-hand side from the left, the resulting integrand is
  a sum of the form
  $r_1\norm{\xi_{33}}^2 + r_2\norm{\xi_{E3}}^2 + r_3\norm{\xi_{3H}}^2 +
  r_4\norm{\xi_{KH}}^2 + r_5\norm{\xi_{EH}}^2 + r_6d^*\theta^\xi$, with
  $r_1,\dots,r_5<0$.
\end{proof}

A similar result was found by Ivanov \& Minchev \cite{Ivanov-M:twistor} in
the special case of quaternionic K\"ahler manifolds with torsion, i.e., for
$\xi \in (K+E)H$.

\begin{corollary}
  If $\xi \in (\Lambda^3_0E+K+E)\sym3H$, then the components of the
  curvature in $(\Lambda^2_0E)_a$, $(\Lambda^2_0E)_b$, $V^{22}$ and
  $\Lambda^4_0E$ are determined by~$\xi$ tensorially.
  \qed
\end{corollary}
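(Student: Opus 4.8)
The plan is to read the statement off Theorems~\ref{thm:Ric} and~\ref{thm:Curv} together with their tables, the only point being exploited that, under the hypothesis, the covariant derivative $\Nt\xi$ contributes nothing to the four curvature components listed. First I would observe that $\xi\in(\Lambda^3_0E+K+E)\sym3H$ is exactly the condition $\xi_{3H}=\xi_{KH}=\xi_{EH}=0$, so that $\xi=\xi_{33}+\xi_{K3}+\xi_{E3}$; moreover, since $\Nt$ is an $\SP(n)\SP(1)$-connection, $\Nt_X\xi$ has the same pointwise type as $\xi$ for every vector $X$, hence $\Nt\xi$ is a section of $T^*M\otimes(\Lambda^3_0E+K+E)\sym3H$, i.e.\ only the pieces $\Nt\xi_{33}$, $\Nt\xi_{K3}$, $\Nt\xi_{E3}$ can occur.

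Next I would dispatch the modules $(\Lambda^2_0E)_a$ and $(\Lambda^2_0E)_b$. By Propositions~\ref{spn:curv1}\itref{item:22aa} and~\ref{spn:curv2}\itref{item:40Ebb} these curvature components are determined by $\pi_{\Lambda^2_0E}(\Ric+\Ricq)$ and $\pi_{\Lambda^2_0E}(\Ric-\Ricq)$, whose explicit form is~\eqref{ricci:l02ea} and~\eqref{ricci:l02eb}. Reading off the $(\Lambda^2_0E)_x$-column of Table~\ref{tab:CompI} (equivalently the $\Lambda^2_0E$-columns of Table~\ref{tab:Ric}), the rows $\Nt\xi_{33}$, $\Nt\xi_{K3}$, $\Nt\xi_{E3}$ carry no tick there, so the $\Nt\xi$-dependent summands of~\eqref{ricci:l02ea}--\eqref{ricci:l02eb}, namely the terms $\sum_{A=I,J,K}\pi_{\sym 2 T^*}(\inp\cdot{(\Nt_{e_i}\xi)_{Ae_i}A\cdot})$ and $\pi_{\Lambda^2_0E}(\inp{(\Nt_\cdot\xi)_{e_i}\cdot}{e_i}-\inp{(\Nt_{e_i}\xi)_\cdot\cdot}{e_i})$, drop out once $\xi$ reduces to its $\sym3H$-part. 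What remains on the right of~\eqref{ricci:l02ea}--\eqref{ricci:l02eb} is then quadratic, hence tensorial, in $\xi$; note also that no $\gamma$ term appears there.

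Then I would treat $V^{22}$ and $\Lambda^4_0E$. By Propositions~\ref{spn:curv1}\itref{item:6-0-0} and~\ref{spn:curv2}\itref{item:6-m12-0} both have vanishing Ricci curvature and lie in $\QK^\perp$, so I would extract them from $\pi^\perp(R)=\pi_2\circ\pi_1(R)$ with $\pi_1(R)$ given by~\eqref{pir:components}, which again involves no $\gamma$, only $\Nt\xi$ and $\xi\otimes\xi$. By Theorem~\ref{thm:Curv}\itref{item:main-ii} and Table~\ref{tab:Cur8}, the only derivative row with a tick under $V^{22}$ is $\Nt\xi_{KH}$ and the only one under $\Lambda^4_0E$ is $\Nt\xi_{3H}$, both excluded here; hence the term $\inp{\talt(\Nt\xi)_{X,Y}Z}U$ of~\eqref{pir:components} projects to zero in $V^{22}+\Lambda^4_0E$. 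Since $\pi_2$ is a fixed $\SP(n)\SP(1)$-map, the $V^{22}$- and $\Lambda^4_0E$-components of $R$ then agree with those of $\pi_2$ applied to the $\xi\circ\xi$ and $\xi\otimes\xi$ part of~\eqref{pir:components}, which is tensorial in $\xi$.

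The main obstacle is not any of the above bookkeeping but the fact on which it rests: that the $\SP(n)\SP(1)$-equivariant maps carrying $\Nt\xi$, for $\xi\in(\Lambda^3_0E+K+E)\sym3H$, into the $\Lambda^2_0E$-, $V^{22}$- and $\Lambda^4_0E$-components vanish identically. This is already part of what Theorems~\ref{thm:Ric} and~\ref{thm:Curv} establish, via~\eqref{ricci:l02ea}, \eqref{ricci:l02eb} and~\eqref{pir:components}: having observed that $\Nt\xi$ takes values in $T^*M\otimes(\Lambda^3_0E+K+E)\sym3H$, the remaining point is the Schur-type check that, after the contractions occurring there, no copy of $\Lambda^2_0E$, $V^{22}$ or $\Lambda^4_0E$ survives, so that strictly no computation is needed beyond those tables; this decomposition count is the only slightly delicate ingredient.
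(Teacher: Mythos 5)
Your argument is correct and is precisely the one the paper intends: the corollary is stated with no written proof because it follows by reading Tables~\ref{tab:CompI} and~\ref{tab:Cur8}, noting that the rows $\Nt\xi_{33}$, $\Nt\xi_{K3}$, $\Nt\xi_{E3}$ carry no tick in the $(\Lambda^2_0E)_x$, $V^{22}$ and $\Lambda^4_0E$ columns, so only the quadratic terms $\xi_{UF}\odot\xi_{VG}$ survive. Your additional observation that $\Nt$, being an $\SP(n)\SP(1)$-connection, preserves the type decomposition of $\xi$ (so that $\xi_{3H}=\xi_{KH}=\xi_{EH}=0$ forces the corresponding components of $\Nt\xi$ to vanish) is exactly the point that makes the table-reading legitimate.
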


\begin{remark}
  It is necessary to say which formul\ae\ we use to derive the entries in
  Tables \ref{tab:Ric}, \ref{tab:CompI} and~\ref{tab:Cur8} since there are
  non-trivial relations between the tensors $\Nt\xi_{UF}$ and $\xi_{UF}
  \odot \xi_{VG}$.  These relations come from the Bianchi identity for the
  curvature~$R$ when expressed in terms of the curvature $\tilde R$ of
  $\Nt$ and $\xi$.  The modules affected are $\Lambda^2_0E\sym 2H$,
  $V^{211}\sym 2H$ and $\Lambda^2_0E\sym 4H$, there are two such relations
  for $\Lambda^2_0 E\sym 2H$ and one for each of the other two modules.
  This means that one can remove two ticks or one tick, respectively, from
  the corresponding column at the cost of introducing ticks elsewhere in
  the same column.  We expect to be able to derive these relations from the
  corresponding components of the equation $d^2\Omega=0$.
\end{remark}

\begin{small}
\providecommand{\bysame}{\leavevmode\hbox to3em{\hrulefill}\thinspace}
\providecommand{\MR}{\relax\ifhmode\unskip\space\fi MR }
\providecommand{\MRhref}[2]{%
  \href{http://www.ams.org/mathscinet-getitem?mr=#1}{#2}
}
\providecommand{\href}[2]{#2}

\end{small}

\begin{smallpars}
  Mart\'\i n Cabrera: \textit{Department of Fundamental Mathematics,
  University of La Laguna, 38200 La Laguna, Tenerife, Spain}.  E-mail:
  \url{fmartin@ull.es}

  Swann: \textit{Department of Mathematics and Computer Science, University
  of Southern Denmark, Campusvej 55, DK-5230 Odense M, Denmark}.  E-mail:
  \url{swann@imada.sdu.dk}
\end{smallpars}

\end{document}